\newtheorem{theorem}{Theorem}[section]
\newtheorem{lemma}[theorem]{Lemma}
\newtheorem{prop}[theorem]{Proposition}
\theoremstyle{definition}
\newtheorem{remark}{Remark}[section]
\newtheorem{example}[remark]{Example}
\numberwithin{equation}{section}
\newcommand{\norm}[1]{\left\Vert#1\right\Vert}
\newcommand{\abs}[1]{\left\vert#1\right\vert}
\newcommand{\average}[1]{\ensuremath{\{\!\!\{#1\}\!\!\}} }
\newcommand{\jump}[1]{\ensuremath{[\![#1]\!]} }
\begin{document}

\title{Discontinuous Galerkin methods for short pulse type equations via hodograph transformations}
\date{}
\author{Qian Zhang\thanks{School of Mathematical Sciences, University of Science and Technology of China, Hefei, Anhui 230026, P.R. China.  E-mail: gelee@mail.ustc.edu.cn.}
\and Yinhua Xia\thanks{Corresponding author. School of Mathematical Sciences, University of Science and Technology of China, Hefei, Anhui 230026, P.R. China.  E-mail: yhxia@ustc.edu.cn. Research supported by NSFC grant Nos. 11871449, 11471306, and a grant from the Science \& Technology on Reliability \& Environmental Engineering Laboratory (No. 6142A0502020817).}
}

\maketitle

\begin{abstract}
In the present paper, we consider the discontinuous Galerkin (DG) methods for solving short pulse (SP) type equations. The short pulse equation has been shown to be completely integrable, which admits the loop-soliton, cuspon-soliton solutions as well as smooth-soliton solutions. Through hodograph transformations, these nonclassical solutions can be profiled as the smooth solutions of the coupled dispersionless (CD) system or the sine-Gordon equation. Thus, DG methods can be developed for the CD system or  the sine-Gordon equation to simulate the loop-soliton or cuspon-soliton solutions of the SP equation. 
The conservativeness or dissipation of the Hamiltonian or momentum for the semi-discrete DG schemes can be proved. Also we modify the above DG schemes and obtain an integration DG scheme. Theoretically the a-priori error estimates have been provided for the momentum conserved DG scheme and the integration DG scheme.  We also propose the DG scheme and the integration DG scheme for the sine-Gordon equation, in case the SP equation can not be transformed to the CD system. All these DG schemes can be adopted to the generalized or modified SP type equations. Numerical experiments are provided to illustrate the optimal order of accuracy and capability of these DG schemes.
\end{abstract}

\textbf{Key Words}: Discontinuous Galerkin methods, short pulse equation, nonclassical soliton solution,  conservative schemes, hodograph transformations.

\section{Introduction}
 In this paper, we mainly study the classic short pulse (SP) equation derived by Sch\"{a}fer and Wayne in \cite{Schafer_2004_PNP}
\begin{equation}\label{eqn:short pulse intro}
  \ u_{xt} = u + \frac{1}{6}(u^3)_{xx}.
\end{equation}
The SP equation models the propagation of ultra-short light pulses in silica optical fibers. Here, $\ u\in\mathbb{R}$ is a real-valued function which represents the magnitude of the electric field. It is well-known that the cubic nonlinear Schr\"{o}dinger (NLS) equation derived from the Maxwell's equation can describe the propagation of pulse in optical fibers. Two preconditions of this derivation need to be satisfied: First, the response of the material attains a quasi-steady-state and second that the pulse width is as large as the oscillation of the carrier frequency. And now we can create very short pulses by the advanced technology and the pulse spectrum is not narrowly localized around the carrier frequency, that is, when the pulse is as short as a few cycles of the central frequency. Therefore, we use SP equation to approximate the ultra-short light pulse.  And numerical experiments made in \cite{Chung_2005_N} show as the pulse shortens, the accuracy of the SP equation approximated to Maxwell's equation increases, however, the NLS equation becomes inaccuracy for the ultra-short pulse. If the pulse is as short as only one cycle of its carrier frequency, then the modified short pulse equation in
\cite{Sergei_2016_NSNS} is used to describe the propagation of pulse in optical fibers.
Similar to the extension of coupled nonlinear Schr\"{o}dinger equations from NLS equations, it is necessary to consider its two-component or multi-component generalizations for describing the effect of polarization or anisotropy \cite{Dimakis_2010_IGMA, Matsuno_2011_JMP, Shen_2017_JNMP}. For birefringent fibers, the authors in \cite{Feng_2012_JPMT, Feng_2015_PDNP} also introduced some extensions of the SP equation to describe the propagation of ultra-short pulse. We will introduce these extensions specifically in Section \ref{Extension}.

Integrable discretizations of short pulse type equations have received considerable attention
recently, especially the loop-soliton, antiloop-soliton and cuspon-soliton solutions in \cite{Feng_2012_JPMT, Feng_2015_JPMT, Feng_2015_PDNP, Feng_2014_PJMI, Shen_2017_JNMP}. The authors linked  the short pulse type equations with the coupled dispersionless (CD) type systems or the sine-Gordon type equations through the hodograph transformations. The key of the discretization is an introduction of a
nonuniform mesh, which plays a role of the hodograph transformations as in continuous
case. In this paper, we aim at solving the loop-soliton, cupson-soliton solutions of the short pulse type equations as well as smooth-soliton solutions.  Through the hodograph transformation $(x, t)\rightarrow(y, s)$ which was proposed in \cite{Sakovich_2005_JPSJ},
\begin{equation}\label{eqn:hodograph_T}
\begin{cases}
&\frac{\partial}{\partial x} = \frac{1}{\rho} \frac{\partial}{\partial y}, \\
&\frac{\partial}{\partial t} = \frac{\partial}{\partial s} + \frac{u^2}{2\rho}\frac{\partial}{\partial y},
\end{cases}
\end{equation}
we can establish the link between the SP equation \eqref{eqn:short pulse intro}  and the CD system  \cite{Kakuhata_1996_JPSJ},
\begin{equation}\label{eqn:CD}
\rho_s + (\frac{1}{2}u^2)_y = 0, \ u_{ys} = \rho u.
\end{equation}
 There exists some short pulse type equations which are failed to be transformed into   CD systems. Therefore, we consider an alternative approach by introducing a new variable $z$ and define another hodograph transformation,
\begin{equation}\label{eqn:hodograph_KS}
\begin{cases}
\frac{\partial}{\partial x} = (\cos z)^{-1} \frac{\partial}{\partial y}, \\
\frac{\partial}{\partial t} = \frac{\partial}{\partial s} + \frac{1}{2}z_s^2(\cos z)^{-1}\frac{\partial}{\partial y},
\end{cases}
\end{equation}
which connects the short pulse  equation \eqref{eqn:short pulse intro} with the sine-Gordon  equation \cite{Sergei_2016_NSNS}
 \begin{equation}
 z_{ys} = \sin{z}.
 \end{equation}
For the CD system or the sine-Gordon equation, we develop the discontinuous Galerkin (DG) schemes to obtain the high-order accuracy numerical solution $u_h(y,s)$ or $z_h(y,s)$. Consequently, a point-to-point profile $u_h(x,t)$ of loop-soliton, cuspon-soliton solutions of the SP equation can be obtained, which are shown by the numerical experiments in Section \ref{Numeical experiment}. 

The DG method was first introduced in 1973 by Reed and Hill in \cite{Reed1973} for solving steady state linear hyperbolic equations. The important ingredient of this method is the design of suitable inter-element boundary treatments (so called numerical fluxes) to obtain highly accurate and stable schemes in many situations. Within the DG framework, the method was extented to deal with derivatives of order higher than one, i.e. local discontinuous Galerkin (LDG) method. The first LDG method was introduced by Cockburn and Shu in \cite{Shu1998_Siam} for solving convection-diffusion equation. Their work was motivated by the successful numerical experiments of Bassi and Rebay \cite{Bassi1997_JCP} for compressible Navier-Stokes equations. Later, Yan and Shu developed a LDG method for a general KdV type equation containing third order
derivatives in \cite{Yan2002_Siam}, and they generalized the LDG method to PDEs with fourth and fifth spatial derivatives in \cite{Yan2002_JSC}. Levy, Shu and Yan \cite{Levy2004_JCP} developed LDG methods for nonlinear dispersive equations that have compactly supported traveling wave solutions, the so-called compactons. More recently, Xu and Shu further generalized the LDG method to solve a series of nonlinear wave equations \cite{Xu2004_JCM,Xu2005_JCP,Xu2005_PDNP,Xu2006_CMAME}. We refer to the review paper \cite{Xu2010_CiCP} of LDG methods for high-order time-dependent partial differential equations.

 Most recently, a series of schemes which called structure-preserving schemes have attracted considerable attention. For some integrable equations like KdV type equations \cite{Bona2013_MC, Xing2016_CiCP, Liu2016_JCP, Zhang2019CiCP},   Zakharov system \cite{Xia2010JCP}, Schr\"odinger-KdV system \cite{Xia2014CiCP}, Camassa-Holm equation \cite{Yu2018_JCAM}, etc., the authors proposed various conservative numerical schemes to ``preserve structure". These conservative numerical schemes have some advantages over the dissipative ones, for example, the Hamiltonian conservativeness can help reduce the phase error along the long time evolution and have a more accurate approximation to exact solutions for KdV type equations \cite{Zhang2019CiCP}. The CD system and the generalized CD system are integrable, thus they have an infinite number of conserved quantities \cite{Kakuhata_1995_SCAN}. For CD system, the following two invariants
\begin{equation}
H_0 = \int \rho u^2 \ dy, \ H_1 = \int \rho^2 + u^2_y \ dy .
\end{equation}
 are corresponding to the Hamiltonian $E_0, E_1$ of the SP equation \cite{Brunelli2005_JMP, Brunelli2006_PLA} via the hodograph transformation,
\begin{equation}
\begin{split}
&E_0 = \int u^2 \ dx, \ E_1 = \int \sqrt{1+u^2_x} \ dx. \\
\end{split}
\end{equation}
In this paper, we first construct $E_0$ conserved DG scheme for the SP equation directly. And for the loop-soliton and cuspon-soliton solutions, the $H_0$, $H_1$ conserved DG schemes for CD system are developed respectively, to profile the singular solutions of the SP equation.  Also we modify the above DG schemes and propose an integration DG scheme which can numerically achieve the optimal convergence rates for $\rho, u$, and $u_y$.  
%as the conserved DG schemes.
Theoretically, we prove that the $H_1$ conserved DG scheme has the optimal order of accuracy for $\rho, u$ and $u_y$ in $L^2$ norm. The integration DG scheme can be proved the optimal  order of accuracy for $ \rho, u_y$ in $L^2$ norm and the suboptimal order of accuracy for  $u$ in $L^\infty$ norm. All these DG schemes can be adopted to the generalized or modified SP type equations. 

The rest of this paper is organized as follows: In Section 2, we develop the DG schemes for the SP equation directly, and  via the hodograph transformations for the CD system and the sine-Gordon equation.   Some notations for simplifying expressions are given in Section \ref{notation}. In Section \ref{subsec:SP}, we first propose the $E_0$ conserved DG scheme for the SP equation. To simulate the loop-soliton or cuspon-soliton solutions of the SP equation,  the  $H_0$, $H_1$ conserved DG schemes and the integration DG scheme are constructed for the CD system which links the SP equation by the hodograph transformation. Meanwhile, the a priori error estimates for $H_1$ conserved DG and integration DG schemes are also provided. Moreover, we develop two kinds of DG schemes for the sine-Gordon equation to introduce another resolution for the SP equation in Section \ref{SG and SP}. Section \ref{Extension} is devoted to summarize the generalized short pulse equations and introduce the corresponding conserved quantities briefly. Several numerical experiments are listed in Section \ref{Numeical experiment}, including the propagation and interaction of loop-soliton, cuspon-solution, breather solution of the short pulse type equations. We also show the accuracy and the change of conserved quantities in Section \ref{Numeical experiment}. Finally, some concluding remarks are given in Section \ref{conclusion}.

\section{The discontinuous Galerkin discretization}\label{CD and SP}
In this section, we present the discontinuous Galerkin discretization for solving the short pulse type equations.  In order to describe the methods, we first introduce some notations.

\subsection{Notations}\label{notation}

We denote the mesh $\mathcal{T}_h $ on the spatial $y$ by $I_j = [y_{j-\frac{1}{2}}, y_{j+\frac{1}{2}}] $ for $j = 1,\ldots, N$, with the cell center denoted by $ y_j = \frac{1}{2}(y_{j-\frac{1}{2}}+y_{j+\frac{1}{2}})$. The cell size is $\Delta y_j = y_{j+\frac{1}{2}}- y_{j-\frac{1}{2}} $ and $ h = \max\limits_{1\leq j\leq N} \ \Delta y_j$.  The finite element space as the solution and test function space consists of piecewise polynomials
$$V_h^k = \{v:v|_{I_j} \in P^k(I_j); 1\leq j\leq N\},$$
 where $P^k(I_j)$ denotes the set of polynomial of degree up to $k$ defined on the cell $I_j$. Note that functions in $V_h^k$ are allowed to be discontinuous across cell interfaces. We also denote by the $u_{j+\frac{1}{2}}^-$ and $u_{j+\frac{1}{2}}^+$ the values of $u$ at $y_{j+\frac{1}{2}}$, from the left cell $I_j$ and the right cell $I_{j+1}$ respectively. And the jump of $u$ is defined as $\jump{u} = u^+ - u^- $, the average of $u$ as $\average{u} = \frac{1}{2}(u^+ + u^-)$.
 To simplify expressions, we adopt the round bracket and angle bracket for the $L^2$ inner product on cell $I_j$ and its boundary
\begin{align*}\label{bracket_def}
(u, v)_{I_j}& = \int_{I_j} uv dy,\\
<\hat{u}, v>_{I_j}& = \hat{u}_{j+\frac{1}{2}}v_{j+\frac{1}{2}}^- - \hat{u}_{j-\frac{1}{2}}v_{j-\frac{1}{2}}^+,
\end{align*}
for one dimensional case.

For the spatial variable $x$, we denote the mesh $\mathcal{T}'_h $ by $I'_j = [x_{j-\frac{1}{2}}, x_{j+\frac{1}{2}}] $ for $j = 1,\ldots, N$. Similar to the notations on the mesh $\mathcal{T}_h$, we have $x_j, \Delta x_j$, and $h' = \max\limits_{1\leq j\leq N} \ \Delta x_j$. Without misunderstanding, we still use $u_{j+\frac{1}{2}}^-$ and $u_{j+\frac{1}{2}}^+$  denote the values of $u$ at $x_{j+\frac{1}{2}}$, from the left cell $I'_j$ and the right cell $I'_{j+1}$ respectively.

\subsection{The short pulse equation}\label{subsec:SP}
Recall the short pulse equation
\begin{equation}\label{eqn:short pulse}
  \ u_{xt} = u + \frac{1}{6}(u^3)_{xx},\ x \in I' = [x_L,x_R],
\end{equation}
where  $u(x,t) \in \mathbb{R}$ is a real-valued function, $t$ denotes the time coordinate and $x$ is the spatial scale. Through the hodograph transformation, it can be converted into a coupled dispersionless (CD) system
\begin{subnumcases}{\label{eqn:CD system}}
\rho_s + (\frac{1}{2}u^2)_y = 0, \label{eqn:CD system1}\\
u_{ys} = \rho u,\label{eqn:CD system2}
\end{subnumcases}
where $s$ denotes the time coordinate, and $y$ is the spatial scale, $y \in I = [y_L,y_R]$. The hodograph  transformation $(y, s) \rightarrow (x, t)$ is defined by
\begin{align}\label{eqn:hodograph_CD_SPE}
\begin{cases}
\frac{\partial}{\partial y} = \rho \frac{\partial}{\partial x}, \\
\frac{\partial}{\partial s} = \frac{\partial}{\partial t} - \frac{u^2}{2}\frac{\partial}{\partial x}.
\end{cases}
\end{align}
%Substitute the transformation \eqref{eqn:hodograph_CD_SPE} into the equation \eqref{eqn:CD system2}, then we obtain
%\begin{equation}
%\frac{\partial}{\partial x}\Big(\frac{\partial}{\partial t} -\frac{u^2}{2}\frac{\partial}{\partial x}\Big)u = u,
%\end{equation}
%which is exactly the SP equation \eqref{eqn:short pulse}.
And the parametric representation of the solution of the short pulse equation \eqref{eqn:short pulse} is
\begin{align}
u = u(y,s), \ x = x(y_0,s) + \int_{y_0}^y \rho(\zeta,s) d\zeta.
\end{align}
where $y_0$ is a real constant.
Since the short pulse equation and the equivalent CD system are completely integrable and hence they have an infinite
number of conservation laws. The first two invariants of the SP equation are described by
\begin{align}
E_0 = \int u^2  dx, \ E_1 = \int \sqrt{1+u^2_x}\ dx,
\end{align}
and the corresponding conservation laws for the CD system are
\begin{equation}
H_0 = \int \rho u^2 dy, \ H_1 = \int \rho^2 + u^2_y \ dy.
\end{equation}

\subsubsection{$E_0$ conserved DG scheme}\label{sec:E0_SP}
To construct the discontinuous Galerkin method for the SP equation directly, we rewrite the SP equation \eqref{eqn:short pulse} as a first order system:
\begin{subnumcases}{\label{first_order_SP}}
   v_t = u + \omega_x, \\
   v = u_x, \\
   \omega = (\frac{1}{6}u^3)_x.
\end{subnumcases}
Then the local DG scheme for equations \eqref{first_order_SP} is formulated as follows: Find $u_h,v_h,\omega_h$ such that, for all test functions $\varphi, \phi, \psi \in V^k_h$ and $I'_j \in \mathcal{T}'_h$
\begin{subnumcases}{\label{eqn: SP_DG_scheme}}
 \label{eqn: SP_DG_scheme_1}  ((v_h)_t,\varphi)_{I'_j} = (u_h,\varphi)_{I'_j} + <\widehat{\omega_h},\varphi>_{I'_j} - (\omega_h, \varphi_x)_{I'_j}, \\
 \label{eqn: SP_DG_scheme_2}  (v_h,\phi)_{I'_j} = <\widehat{u_h}, \phi>_{I'_j} - (u_h, \phi_x)_{I'_j}, \\
 \label{eqn: SP_DG_scheme_3} (\omega_h, \psi)_{I'_j} = <\widehat{f(u_h)},\psi>_{I'_j} - (f(u_h), \psi_x)_{I'_j},
\end{subnumcases}
where $f(u) = \frac{1}{6}u^3$. The ``hat" terms in the scheme are the so-called ``numerical fluxes'', which are functions defined on the cell boundary from integration by parts and should be designed based on different guiding principles for different PDEs to ensure the stability and local solvability of the intermediate variables.  To ensure the scheme is $E_0$ conserved, the numerical fluxes we take are
\begin{subnumcases}{\label{eqn: flux}}
\widehat{\omega_h} = \average{\omega_h}, \ \widehat{u_h} = \average{u_h}, \label{eqn: flux1} \\
\widehat{f(u)} =  \begin{cases} \frac{\jump{F(u)}}{\jump{u}}, \; &\jump{u} \neq 0, \\ f(\average{u}),\; &\jump{u} = 0,\end{cases} \label{eqn: flux2}
\end{subnumcases}
where $ F(u) = \int^u f(\tau)d\tau$.

\begin{prop}(Energy conservation) The DG scheme \eqref{eqn: SP_DG_scheme} with the numerical fluxes \eqref{eqn: flux} for the short pulse equation \eqref{eqn:short pulse} satisfies the energy conservativeness
\begin{align}
\frac{d}{dt}E_0(u_h)  = 0.
\end{align}
\end{prop}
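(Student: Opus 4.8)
The plan is to mirror the continuous conservation argument. At the PDE level, writing $u_t = p + \omega$ with $p_x = u$ and $\omega = \tfrac12 u^2 u_x$, one has $\tfrac{d}{dt}\int u^2\,dx = 2\int u\,u_t\,dx = 2\int u\,p\,dx + 2\int u\,\omega\,dx = [\,p^2 + \tfrac14 u^4\,]$, a pure boundary term that vanishes under periodic (or compactly supported) data. I would carry this decomposition into the discrete setting, performing all interface bookkeeping under the periodicity assumption so that every telescoping sum over $x_{j+\frac12}$ closes to zero.

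First I would write $\frac{d}{dt}E_0(u_h) = 2\sum_j((u_h)_t, u_h)_{I'_j}$ and aim to show the right-hand side is zero. The device that makes this tractable is to differentiate the auxiliary relation \eqref{eqn: SP_DG_scheme_2} in time; since the mesh and the central flux $\widehat{u_h} = \average{u_h}$ are linear and time-independent, this gives $((v_h)_t,\phi)_{I'_j} = \langle\average{(u_h)_t},\phi\rangle_{I'_j} - ((u_h)_t,\phi_x)_{I'_j}$. Subtracting this from \eqref{eqn: SP_DG_scheme_1} with a common test function $\varphi$ eliminates $(v_h)_t$, and introducing the discrete analogue $p_h := (u_h)_t - \omega_h \in V_h^k$ of the continuous $p$ yields the weak relation $(u_h,\varphi)_{I'_j} + (p_h,\varphi_x)_{I'_j} - \langle\average{p_h},\varphi\rangle_{I'_j} = 0$ for all $\varphi\in V_h^k$, which is exactly the weak form of $u_h = (p_h)_x$ with central flux.

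Next I would take $\varphi = p_h$ (legitimate since $p_h\in V_h^k$) and sum over $j$. The volume term integrates by parts to $\tfrac12\sum_j[p_h^2]_{\partial I'_j}$, and together with $\sum_j\langle\average{p_h},p_h\rangle_{I'_j}$ the contributions at each interface combine into $\tfrac12((p_h^+)^2 - (p_h^-)^2) - \average{p_h}\jump{p_h}$, which is identically zero by the algebraic identity $\average{p_h}\jump{p_h} = \tfrac12((p_h^+)^2 - (p_h^-)^2)$. This gives $\sum_j(u_h,p_h)_{I'_j}=0$, i.e. $\sum_j((u_h)_t,u_h)_{I'_j} = \sum_j(\omega_h,u_h)_{I'_j}$. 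It then remains to kill the nonlinear term by taking $\psi = u_h$ in \eqref{eqn: SP_DG_scheme_3}: writing $F(u)=\int^u f$, the volume part integrates to $\sum_j[F(u_h)]_{\partial I'_j}$, so at each interface the residual is $\jump{F(u_h)} - \widehat{f(u_h)}\jump{u_h}$, which vanishes precisely because of the Gauss-type flux choice \eqref{eqn: flux2} (and trivially when $\jump{u_h}=0$). Hence $\sum_j(\omega_h,u_h)_{I'_j}=0$ and $\frac{d}{dt}E_0(u_h)=0$.

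The main obstacle is conceptual rather than computational: one must recognize that subtracting the time-differentiated equation \eqref{eqn: SP_DG_scheme_2} from \eqref{eqn: SP_DG_scheme_1} is the step that linearizes the evolution and isolates the discrete antiderivative $p_h$, after which the linear interface terms cancel through the central fluxes $\average{\cdot}$ and the nonlinear interface terms cancel through $\widehat{f(u)}=\jump{F(u)}/\jump{u}$ by design. The only genuinely analytic care is the telescoping of interface sums, which relies on the periodicity (or decay) of the boundary data.
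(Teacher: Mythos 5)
Your proof is correct and is essentially the paper's own argument in cleaner packaging: the paper also time-differentiates \eqref{eqn: SP_DG_scheme_2} and then sums five equations obtained with the test-function choices $\varphi=(u_h)_t$ and $\varphi=-\omega_h$ in \eqref{eqn: SP_DG_scheme_1}, $\eta=-(u_h)_t$ and $\eta=\omega_h$ in the differentiated equation, and $\psi=-u_h$ in \eqref{eqn: SP_DG_scheme_3}, which is precisely your step of testing the subtracted equation with $p_h=(u_h)_t-\omega_h$ and testing \eqref{eqn: SP_DG_scheme_3} with $u_h$. The interface cancellations you invoke, $\average{p_h}\jump{p_h}=\tfrac12\jump{p_h^2}$ for the linear part and $\widehat{f(u_h)}\jump{u_h}=\jump{F(u_h)}$ for the nonlinear part, are exactly what makes the paper's extra term $\Theta$ vanish in its cell entropy identity.
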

\begin{proof}
For the equation \eqref{eqn: SP_DG_scheme_2}, we take the time derivative and get
\begin{equation}\label{eqn:time derivate term}
((v_h)_t,\eta)_{I'_j} = <\widehat{(u_h)}_t, \eta>_{I'_j} - ((u_h)_t, \eta_x)_{I'_j}. \\
\end{equation}
Since \eqref{eqn:time derivate term}, and \eqref{eqn: SP_DG_scheme_1}-\eqref{eqn: SP_DG_scheme_3} hold for any test functions in $V^k_h$, we can choose
\begin{equation}
\varphi = (u_h)_t, \ \eta = -(u_h)_t, \ \psi = -u_h,
\end{equation}
and it follows that
\begin{align}
\label{eqn:SPDG_1}((v_h)_t,(u_h)_t)_{I'_j}& = (u_h, (u_h)_t)_{I'_j} + <\widehat{\omega_h},(u_h)_t>_{I'_j}- (\omega_h,(u_h)_{tx})_{I'_j}, \\
\label{eqn:SPDG_2}-((v_h)_t,(u_h)_t)_{I'_j} &= -<\widehat{(u_h)}_t,(u_h)_t>_{I'_j} + ((u_h)_t,(u_h)_{tx})_{I'_j}, \\
\label{eqn:SPDG_3}-(\omega_h,u_h)_{I'_j}& = -<\widehat{f(u_h)},(u_h)>_{I'_j}+ (f(u_h),(u_h)_{x})_{I'_j}.
\end{align}
To eliminate extra terms, we take test functions $\varphi = -\omega_h$ in \eqref{eqn: SP_DG_scheme_1}, $\eta = \omega_h$ in \eqref{eqn:time derivate term}, and then obtain
\begin{align}
\label{eqn:SPDG_4}-((v_h)_t,\omega_h)_{I'_j}& = -(u_h, \omega_h)_{I'_j} - <\widehat{\omega_h},\omega_h>_{I'_j}+ (\omega_h,(\omega_h)_{x})_{I'_j}, \\
\label{eqn:SPDG_5}((v_h)_t,\omega_h)_{I'_j}& = <\widehat{(u_h)}_t,\omega_h>_{I'_j} - ((u_h)_t,(\omega_h)_{x})_{I'_j}.
\end{align}
With these choices of test functions and summing up the five equations in \eqref{eqn:SPDG_1}-\eqref{eqn:SPDG_5}, we get
\begin{equation}\label{eqn:SPDG_energy}
\begin{split}
(u_h,(u_h)_t)_{I'_j} &+ <\widehat{\omega_h},(u_h)_t>_{I'_j} - (\omega_h,(u_h)_{tx})_{I'_j} - <\widehat{(u_h)}_t,(u_h)_t>_{I'_j} + ((u_h)_t,(u_h)_{tx})_{I'_j} \\
& + <\widehat{(u_h)}_t,\omega_h>_{I'_j} - ((u_h)_t,(\omega_h)_{x})_{I'_j} - <\widehat{f(u_h)},(u_h)>_{I'_j} + (f(u_h),(u_h)_{x})_{I'_j}\\
& - <\widehat{\omega_h},\omega_h>_{I'_j}+ (\omega_h,(\omega_h)_{x})_{I'_j} = 0.
\end{split}
\end{equation}
Now the equation \eqref{eqn:SPDG_energy} can be rewritten into following form
\begin{equation}\label{eqn:cell entropy}
(u_h,(u_h)_t)_{I'_j} + \Phi_{j+\frac{1}{2}} - \Phi_{j-\frac{1}{2}} + \Theta_{j-\frac{1}{2}} = 0,
\end{equation}
where the numerical entropy flux $\Phi$ is given by
\begin{align}
\Phi &= \widehat{\omega_h}(u_h^-)_t  - \omega_h^-(u_h^-)_t +\frac{1}{2}(u_h^-)_t^2 -\widehat{(u_h)}_t(u_h^-)_t \\
&+ \widehat{(u_h)}_t\omega_h^- -\frac{1}{2}(\omega^-_h)^2 +\widehat{\omega_h}\omega_h^-
     - \widehat{f(u_h)}u_h^- + F(u_h^-),
\end{align}
and the extra term $\Theta$ is defined as
\begin{equation}
\begin{split}
\Theta &=  -\widehat{\omega_h}\jump{(u_h)_t} - \widehat{(u_h)_t}\jump{w_h} + \jump{w_h(u_h)_t} + \widehat{f(u_h)}\jump{u_h} - \jump{F(u_h)}\\ &+(\widehat{(u_h)_t}-\average{(u_h)_t})\jump{(u_h)_t}+(-\widehat{\omega_h}+\average{\omega_h})\jump{\omega_h}=0,
\end{split}
\end{equation}
which vanishes due to choices of the conservative numerical fluxes \eqref{eqn: flux}.
Summing up the cell entropy equalities \eqref{eqn:cell entropy} with periodic or homogeneous Dirichlet boundary conditions, implies that
\begin{equation}
(u_h,(u_h)_t)_{I'} = 0.
\end{equation}
Thus, the DG scheme \eqref{eqn: SP_DG_scheme} for the short pulse equation is  $E_0$ conserved.
\end{proof}

The $E_0$ conserved DG scheme resolves the smooth solutions for the short pulse equation well, as we show in Section \ref{Numeical experiment}. Numerically, $E_0$ conserved DG scheme can achieve $(k+1)$-$th$ order of accuracy for even $k$, and $k$-$th$ order of accuracy for odd $k$. However, for the loop-soliton and cuspon-soliton solutions, this scheme can not be used due to the singularity of solutions. So we introduce the  DG schemes via hodograph transformations in the following sections.

\subsubsection{$H_0$ conserved DG scheme}\label{sec:H0_SP}
As we have mentioned, the short pulse equation can be converted into the coupled dispersionless (CD) system through the hodograph transformation.
To construct the local discontinuous Galerkin numerical method for the CD system, we first rewrite \eqref{eqn:CD system} as a first order system
\begin{subnumcases}{\label{H0_ODE}}
\rho_s + \gamma_y = 0, \label{eqn:H0_ODE1}\\
\omega_s = \rho u, \label{eqn:H0_ODE2}\\
\omega = u_y, \label{eqn:H0_ODE3}\\
\gamma = \frac{1}{2}u^2.  \label{eqn:H0_ODE4}
\end{subnumcases}
%\begin{equation}
%\begin{cases}
%&\rho_s + (\frac{1}{2}u^2)_y = 0, \\
%&w_s = \rho u, \\
%&w = u_y.
%\end{cases}
%\end{equation}
Then we can formulate the LDG numerical method as follows: Find  $ u_h $, $ \rho_h $, $\omega_h$, $\gamma_h\in V_h^k $  such that
\begin{subnumcases}{\label{eqn: CD_numerical-scheme}}
((\rho_h)_s, \phi)_{I_j} + <\widehat{\gamma_h}, \phi>_{I_j} - (\gamma_h, \phi_y)_{I_j} = 0, \\
((\omega_h)_{s}, \varphi)_{I_j} = (\rho_h u_h, \varphi)_{I_j},\\
 (\omega_h, \psi)_{I_j} = <\widehat{u_h}, \psi>_{I_j} - (u_h, \psi_y)_{I_j}, \label{eqn: CD_numerical-scheme_3}\\
(\gamma_h,\eta)_{I_j} = (\frac{1}{2}u_h^2, \eta)_{I_j}
\end{subnumcases}
for all test functions $ \phi $, $ \varphi $, $ \psi $, $\eta$ $ \in V_h^k $ and $I_j \in \mathcal{T}_h$.
%\begin{equation}\label{eqn: CD_numerical-scheme}
%\begin{cases}
%&((\rho_h)_s, \phi)_{I_j} + <\widehat{f(u_h, \rho_h)}, \phi>_{I_j} - (\frac{1}{2}u_h^2, \phi_y)_{I_j} = 0, \\
%&((w_h)_{s}, \varphi)_{I_j} = (\rho_h u_h, \varphi)_{I_j},\\
%& (\omega_h, \psi)_{I_j} = <\widehat{u_h}, \psi>_{I_j} - (u_h, \psi_y)_{I_j}
%\end{cases}
%\end{equation}
% for all test functions $ \phi $, $ \varphi $, $ \psi $ $ \in V_h^k $ and $I_j \in \mathcal{T}_h$.
To guarantee the conservativeness of $H_0$,  we adopt the central numerical fluxes
%\begin{align}
%\label{eqn:CD_flux_lax} &  \widehat{f(u_h, \rho_h)} = (f(u_h^+) + f(u_h^-) - \alpha(\rho_h^+ - \rho_h^-) )/2, \ \alpha = \max_u\abs{f'(u)},  \\
%\label{eqn:CD_flux_upwind}&\widehat{u_h} = u_h^+,
%\end{align}
%where $f(u) = \frac{1}{2}u^2$.
\begin{align}\label{eqn:upwind}
\widehat{\gamma_h} = \average{\gamma_h}, \ \widehat{u_h} = \average{u_h}.
\end{align}
Numerically we will see that the optimal  $(k+1)$-$th$ order of accuracy can be obtained for $u_h$, $\rho_h$ when $k$ is even, however, the numerical solutions $u_h$, $\rho_h$ have $k$-$th$ order of accuracy when $k$ is odd. If we modify numerical fluxes as below:
\begin{align}\label{flux:SP}
\widehat{\gamma_h} = \average{\gamma_h}-\alpha\jump{\rho_h}-\beta\jump{\gamma_h}, \ \widehat{u_h} = \average{u_h} + \mu\jump{u_h},
\end{align}
 then the scheme is dissipative on $H_0$ with the appropriate parameters  in Proposition \ref{prop:H0} and the optimal order of accuracy can be achieved numerically for this $H_0$ dissipative scheme.

\begin{prop}\label{prop:H0}($H_0$ conservation$/$dissipation)
 The semi-discrete DG numerical scheme \eqref{eqn: CD_numerical-scheme}, \eqref{eqn:upwind} can preserve quantity $H_0(\rho_h,u_h) = \int_I \rho_h u_h^2\ dy $ spatially. And the scheme \eqref{eqn: CD_numerical-scheme} with \eqref{flux:SP} composes a dissipative DG scheme on $H_0$ if the parameters in \eqref{flux:SP} satisfy the conditions
\begin{equation}
\alpha = 0, \ \beta \geq 0, \ \mu \geq 0 \ \text{and} \ \beta + \mu \neq 0.
\end{equation}

\end{prop}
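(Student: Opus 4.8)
The plan is to differentiate the discrete energy directly in $s$ and then substitute the scheme relations with carefully matched test functions, exactly in the spirit of the $E_0$ argument above. Writing $H_0(\rho_h,u_h)=\sum_j(\rho_h, u_h^2)_{I_j}$, I would first compute $\frac{d}{ds}H_0 = \sum_j\big[((\rho_h)_s, u_h^2)_{I_j} + 2(\rho_h u_h,(u_h)_s)_{I_j}\big]$. The immediate obstacle is that $u_h^2\notin V_h^k$, so it cannot be used as a test function. I would dispose of this with the auxiliary relation $(\gamma_h,\eta)_{I_j}=(\tfrac12 u_h^2,\eta)_{I_j}$: since $(\rho_h)_s\in V_h^k$, taking $\eta=(\rho_h)_s$ gives $((\rho_h)_s,u_h^2)_{I_j}=2((\rho_h)_s,\gamma_h)_{I_j}$, which replaces the forbidden nonlinear term by one involving the projected variable $\gamma_h$. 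For the second integral I would test the relation $(\omega_h)_s=\rho_h u_h$ against $\varphi=(u_h)_s$ to get $(\rho_h u_h,(u_h)_s)_{I_j}=((\omega_h)_s,(u_h)_s)_{I_j}$.

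Next I would feed these back into the remaining two equations of \eqref{eqn: CD_numerical-scheme}. Choosing $\phi=\gamma_h$ in the $\rho_h$-equation gives $((\rho_h)_s,\gamma_h)_{I_j}=-<\widehat{\gamma_h},\gamma_h>_{I_j}+(\gamma_h,(\gamma_h)_y)_{I_j}$, and differentiating the definition of $\omega_h$ in $s$ and taking $\psi=(u_h)_s$ gives $((\omega_h)_s,(u_h)_s)_{I_j}=<\widehat{(u_h)_s},(u_h)_s>_{I_j}-((u_h)_s,(u_h)_{sy})_{I_j}$. The key observation is that both volume terms are exact $y$-derivatives, namely $(\gamma_h,(\gamma_h)_y)_{I_j}=\tfrac12\int_{I_j}((\gamma_h)^2)_y\,dy$ and $((u_h)_s,(u_h)_{sy})_{I_j}=\tfrac12\int_{I_j}(((u_h)_s)^2)_y\,dy$, so after summing over $j$ every volume contribution collapses to interface values.

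The main bookkeeping step is then to regroup, at each interface $y_{j+\frac12}$, the flux term coming from $I_j$ with the one coming from $I_{j+1}$, using periodic or homogeneous Dirichlet boundary conditions to discard the physical endpoints, and to apply the elementary identities $(a^-)^2-(a^+)^2=-2\average{a}\jump{a}$ and $a^--a^+=-\jump{a}$. I expect this to reduce the $\rho$-part to $\sum_{j}(\widehat{\gamma_h}-\average{\gamma_h})\jump{\gamma_h}$ and the $u$-part to $\sum_{j}(\average{(u_h)_s}-\widehat{(u_h)_s})\jump{(u_h)_s}$, so that $\frac{d}{ds}H_0=2\sum_j(\widehat{\gamma_h}-\average{\gamma_h})\jump{\gamma_h}+2\sum_j(\average{(u_h)_s}-\widehat{(u_h)_s})\jump{(u_h)_s}$. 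For the central fluxes \eqref{eqn:upwind} both factors vanish identically, which yields $\frac{d}{ds}H_0=0$ and proves the conservation statement.

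For the modified fluxes \eqref{flux:SP} I would substitute $\widehat{\gamma_h}-\average{\gamma_h}=-\alpha\jump{\rho_h}-\beta\jump{\gamma_h}$ and, using $\widehat{(u_h)_s}=\average{(u_h)_s}+\mu\jump{(u_h)_s}$, $\average{(u_h)_s}-\widehat{(u_h)_s}=-\mu\jump{(u_h)_s}$, obtaining $\frac{d}{ds}H_0 = -2\sum_j\big(\alpha\jump{\rho_h}\jump{\gamma_h}+\beta(\jump{\gamma_h})^2+\mu(\jump{(u_h)_s})^2\big)$. This identity is where all the flux conditions enter, and the crux is the cross term $\alpha\jump{\rho_h}\jump{\gamma_h}$: it is sign-indefinite and cannot be dominated by the two square terms for arbitrary discrete solutions, which is precisely what forces $\alpha=0$. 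The surviving terms $\beta(\jump{\gamma_h})^2+\mu(\jump{(u_h)_s})^2$ are nonnegative exactly when $\beta\ge0$ and $\mu\ge0$, giving $\frac{d}{ds}H_0\le0$; and the requirement $\beta+\mu\neq0$ ensures at least one penalty is active, so the scheme is genuinely dissipative rather than reducing to the conservative case $\beta=\mu=0$. I anticipate that the only delicate points are the handling of $u_h^2$ via the projection relation and the sign analysis of the cross term; the interface regrouping, though lengthy, is routine.
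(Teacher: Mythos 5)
Your proposal is correct and follows essentially the same route as the paper's proof: the same test-function choices ($\phi=\gamma_h$, $\varphi=\pm(u_h)_s$, $\eta=\mp(\rho_h)_s$, and $\psi=(u_h)_s$ in the time-differentiated $\omega_h$-equation), the same collapse of the volume terms $(\gamma_h,(\gamma_h)_y)$ and $((u_h)_s,(u_h)_{sy})$ into interface jumps, and the same resulting quantity $(\widehat{\gamma_h}-\average{\gamma_h})\jump{\gamma_h}+(\average{(u_h)_s}-\widehat{(u_h)_s})\jump{(u_h)_s}$ whose sign is then analyzed for the two flux choices. The only difference is organizational (you differentiate $H_0$ and substitute, while the paper sums the tested equations and recognizes the energy derivative), which is the same algebra read in the opposite direction.
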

\begin{proof}
First, we take time derivative of equation \eqref{eqn: CD_numerical-scheme_3}, and the test functions are chosen as $\phi = \gamma_h, \varphi = -(u_h)_s, \psi = (u_h)_s, \eta = -(\rho_h)_s$.  Then we have
\begin{align}
\label{eqn:one}   &((\rho_h)_s, \gamma_h)_{I_j} + <\widehat{\gamma_h}, \gamma_h>_{I_j} - (\gamma_h, ({\gamma_h})_y)_{I_j} = 0, \\
\label{eqn:second}-&((\omega_h)_{s}, (u_h)_s)_{I_j} + (\rho_h u_h, (u_h)_s)_{I_j} = 0 ,\\
\label{eqn:third} &((\omega_h)_s, (u_h)_s)_{I_j} - <\widehat{(u_h)_s}, (u_h)_s>_{I_j} + ((u_h)_s, (u_h)_{sy})_{I_j} = 0,\\
\label{eqn:fourth}-&(\gamma_h,(\rho_h)_s)_{I_j} +(\frac{1}{2}u_h^2, (\rho_h)_s)_{I_j} = 0.
\end{align}
Summing up all equalities \eqref{eqn:one}-\eqref{eqn:fourth}, we obtain
\begin{equation}
\begin{split}
&(\rho_h u_h, (u_h)_s)_{I_j} + (\frac{1}{2}u_h^2, (\rho_h)_s)_{I_j}+\\
& <\widehat{\gamma_h}, \gamma_h>_{I_j} - (\gamma_h, ({\gamma_h})_y)_{I_j}
- <\widehat{(u_h)_s}, (u_h)_s>_{I_j} + ((u_h)_s, (u_h)_{sy})_{I_j} = 0,
\end{split}
\end{equation}
which can be written as
\begin{align}\label{eqn:H0_cell_entropy1}
\frac{1}{2}\frac{d}{ds} \int_{I_j} \rho_h u_h^2 dy + \Phi_{j+\frac{1}{2}} - \Phi_{j-\frac{1}{2}} + \Theta_{j-\frac{1}{2}} = 0,
\end{align}
where the numerical entropy fluxes are given by
\begin{equation}
\Phi = \widehat{\gamma_h}\gamma_h^- - \frac{1}{2}(\gamma_h^-)^2 - \widehat{(u_h)_s}(u_h)_s^- + \frac{1}{2}((u_h)_s^-)^2
\end{equation}
and the extra term $\Theta$ is
\begin{equation}
\begin{split}
\Theta = (-\widehat{\gamma_h}+\average{\gamma_h})\jump{\gamma_h}+ (\widehat{(u_h)_s} - \average{(u_h)_s})\jump{(u_h)_s}.
\end{split}
\end{equation}
Therefore the choices of $\widehat{\gamma_h}, \widehat{u_h}$ in \eqref{flux:SP}
concern the conservativeness of the DG scheme. According to the parameters $\alpha,\beta,\mu $ , we give below two cases:
\begin{align}
\label{flux_theta_1}&\text{ $H_0$ conserved DG scheme $\alpha = \beta = \mu = 0$ :}  \; &\Theta = 0;   \\
\label{flux_theta_2}&\text{ $H_0$ dissipative DG scheme  $\alpha = 0, \beta = \frac{1}{2}, \mu =\frac{1}{2}$:} \; &\Theta = \frac{1}{2}(\jump{\gamma^2}+ \jump{u_s^2}) \geq 0    .
\end{align}
Summing up the cell entropy equalities \eqref{eqn:H0_cell_entropy1} and \eqref{flux_theta_1}, \eqref{eqn:H0_cell_entropy1} and \eqref{flux_theta_2}, respectively, then we get
\begin{equation}
\begin{split}
&\frac{1}{2}\frac{d}{ds} \int_I \rho_h u_h^2 \ dy  = 0,\ \text{ $H_0$ conserved DG scheme}, \\
&\frac{1}{2}\frac{d}{ds} \int_I \rho_h u_h^2 \ dy  \leq 0, \ \text{ $H_0$ dissipative DG scheme}. \label{eqn:H0_flu}
\end{split}
\end{equation}
\end{proof}

In the numerical  test Example \ref{ex1}, it shows that the dissipative scheme with parameters \eqref{flux_theta_2} can achieve the optimal convergence rate for both $u_h$ and $\rho_h$ no matter $k$ is odd or even. However, the order of accuracy for the  $H_0$ conserved DG scheme is $k$-$th$ for odd $k$, $(k+1)$-$th$ for even $k$.
The choices of these parameters are not unique, but the above numerical fluxes in the dissipative scheme can minimize the stencil as in \cite{Zhang2019CiCP}.

\subsubsection{$H_1$ conserved DG scheme}
In this section, we construct another discontinuous Galerkin scheme which preserves the quantity $H_1$ of the CD system \eqref{eqn:CD system} which links the Hamiltonian $E_1$ of the short pulse equation through the hodograph transformation.
First, we rewrite the CD system as a first order system
\begin{subnumcases}{\label{eqn:H1_CD}}
\rho_s + u\omega = 0, \label{eqn:H1_CD_1}\\
 w_{s} = \rho u, \label{eqn:H1_CD_2}\\
 \omega = u_y \label{eqn:H1_CD_3}.
\end{subnumcases}
Then the semi-discrete LDG numerical scheme can be constructed as: Find  $ u_h $, $ \omega_h $, $ \rho_h $ $\in V_h^k$ such that
\begin{subnumcases}{\label{eqn:numerical_conserved_DG_CD}}
((\rho_h)_s, \phi)_{I_j} + (u_h\omega_h,\phi)_{I_j}  = 0, \\
((\omega_h)_{s}, \varphi)_{I_j} = (\rho_h u_h, \varphi)_{I_j}, \\
(\omega_h, \psi)_{I_j} = <\widehat{u_h},\psi_y>_{I_j} - (u_h, \psi_y)_{I_j}, \label{eqn:numerical_conserved_DG_CD_3}
\end{subnumcases}
for all test functions $ \phi $, $ \varphi $, $ \psi $ $ \in V_h^k $ and $I_j \in \mathcal{T}_h$.  The numerical flux is taken as  $\widehat{u_h} = u_h^+$. Numerically, the optimal  $(k+1)$-$th$ order of accuracy can be obtained for both $u_h$, $\rho_h$. If we take ${\widehat{u_h} = \average{u_h}}$, then the accuracy is   $(k+1)$-$th$ order for $u_h$, $\rho_h$ when $k$ is even, and $k$-$th$ order of accuracy when $k$ is odd.
\begin{prop}($H_1$ conservation)
The semi-discrete DG numerical scheme  \eqref{eqn:numerical_conserved_DG_CD} can preserve the quantity $H_1(\rho_h,\omega_h) = \int_I (\rho_h^2 + \omega_h^2) dy$ spatially.
\end{prop}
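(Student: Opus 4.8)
The plan is to mimic the structure of the preceding two propositions: differentiate $H_1$ in time, substitute the DG scheme, and show all interface terms telescope and cancel under the chosen numerical flux. The quantity to control is $\frac{d}{ds}H_1 = \frac{d}{ds}\int_I(\rho_h^2+\omega_h^2)\,dy = 2\int_I\bigl(\rho_h(\rho_h)_s + \omega_h(\omega_h)_s\bigr)\,dy$. First I would select test functions that reproduce exactly these two integrands. From scheme \eqref{eqn:numerical_conserved_DG_CD} the natural choice is $\phi = \rho_h$ in the first equation and $\varphi = \omega_h$ in the second, which on summing over all cells gives $\frac{1}{2}\frac{d}{ds}H_1 = \sum_j\bigl[(\rho_h u_h, \omega_h)_{I_j} \cdot(-1) + (\rho_h u_h, \omega_h)_{I_j}\bigr]$ from the volume terms — except the first equation carries $(u_h\omega_h,\rho_h)$ with a minus sign and the second carries $(\rho_h u_h,\omega_h)$ with a plus sign, so these bulk nonlinear terms cancel pointwise and leave only whatever the third equation contributes.

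The role of the third equation \eqref{eqn:numerical_conserved_DG_CD_3} is to tie $\omega_h$ to $u_h$ and thereby connect the dissipation/conservation to the flux $\widehat{u_h}$. The step I expect to do carefully is to take the time derivative of \eqref{eqn:numerical_conserved_DG_CD_3}, giving $((\omega_h)_s,\psi)_{I_j} = <\widehat{(u_h)_s},\psi>_{I_j} - ((u_h)_s,\psi_y)_{I_j}$, and then choose $\psi$ so that the interface contributions assemble into a pure telescoping (total-derivative) form. After substituting the scheme relations, the accumulated cell contributions should collapse to a sum of boundary terms $\Phi_{j+\frac12}-\Phi_{j-\frac12}$ plus an extra interface term $\Theta_{j-\frac12}$, exactly as in the $E_0$ and $H_0$ proofs; the telescoping part vanishes under periodic or homogeneous Dirichlet boundary conditions.

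The main obstacle, and the crux of the argument, is verifying that the residual interface term $\Theta$ vanishes identically for the chosen flux $\widehat{u_h}=u_h^+$. This requires expressing $\Theta$ in terms of the jumps $\jump{u_h}$, $\jump{\omega_h}$, and $\jump{(u_h)_s}$ at each interface and checking that the specific combination produced by the flux is zero. I would write $\Theta$ using the identity $\jump{ab} = \average{a}\jump{b} + \jump{a}\average{b}$ and then substitute $\widehat{u_h}=u_h^+ = \average{u_h} + \tfrac12\jump{u_h}$; the one-sided (upwind-type) choice is precisely what makes the flux-dependent remainder telescope cleanly rather than leaving a sign-indefinite jump term. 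The only subtlety is bookkeeping the two-index boundary labels correctly so that terms at $y_{j+\frac12}$ from cell $I_j$ align with those from $I_{j+1}$; once that alignment is checked, conservation $\frac{d}{ds}H_1(\rho_h,\omega_h)=0$ follows by summing the cell identities.
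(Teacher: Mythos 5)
Your first paragraph is, on its own, the complete proof, and it is exactly the paper's proof: take $\phi=\rho_h$ in the first equation and $\varphi=\omega_h$ in the second, observe that $(u_h\omega_h,\rho_h)_{I_j}=(\rho_h u_h,\omega_h)_{I_j}$ pointwise, so the two volume terms cancel and $\frac{1}{2}\frac{d}{ds}\int_{I_j}(\rho_h^2+\omega_h^2)\,dy=0$ holds on every single cell; summing over $j$ finishes. Note that this is \emph{cell-local} conservation: no telescoping of boundary terms, no periodicity or Dirichlet assumption is needed, and none is invoked in the paper.

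The remaining two-thirds of your proposal chases a term that does not exist, and this reflects a structural misreading of the scheme rather than harmless extra caution. The system \eqref{eqn:H1_CD} was deliberately written with $\rho_s+u\omega=0$ in place of $\rho_s+(\frac{1}{2}u^2)_y=0$, so the two evolution equations in \eqref{eqn:numerical_conserved_DG_CD} contain no spatial derivatives at all; there is no integration by parts, hence no boundary brackets and no numerical flux in them. The third equation \eqref{eqn:numerical_conserved_DG_CD_3} serves only to recover $u_h$ from $\omega_h$ and is never tested in the $H_1$ balance, so there is no ``contribution from the third equation,'' no entropy flux $\Phi_{j\pm\frac{1}{2}}$, and no residual $\Theta$ to kill. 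Consequently the step you single out as ``the main obstacle, and the crux of the argument'' --- verifying that a flux-dependent interface remainder vanishes for $\widehat{u_h}=u_h^+$ --- is vacuous, and you leave it unexecuted precisely because there is nothing to execute. Worse, the conclusion you attach to it is false: $H_1$ conservation does not hinge on the one-sided flux. The paper explicitly allows $\widehat{u_h}=\average{u_h}$ as well; the flux choice in \eqref{eqn:numerical_conserved_DG_CD_3} influences only the observed order of accuracy, never the conservation property. The analogy with the $E_0$ and $H_0$ proofs misled you: there, the spatial derivatives ($\omega_x$, $\gamma_y$) are discretized by DG integration by parts, which is exactly what generates the $\Phi$ and $\Theta$ terms; here that mechanism is absent by design.
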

\begin{proof}
By taking the test functions $\phi = \rho_h, \varphi = \omega_h$ in \eqref{eqn:numerical_conserved_DG_CD}, we obtain
\begin{align*}
&((\rho_h)_s, \rho_h)_{I_j} + (u_h\omega_h,\rho_h)_{I_j}  = 0, \\
&((w_h)_{s}, \omega_h)_{I_j} = (\rho_h u_h, \omega_h)_{I_j}.
\end{align*}
Summing up over all $I_j \in \mathcal{T}_h$, it implies that
\begin{align*}
\frac{1}{2}\frac{d}{ds} \int_{I} \rho_h^2 + \omega_h^2 \ dy = 0.
\end{align*}
\end{proof}
In what follows, we prepare to give the a priori error estimate for the $H_1$ conserved DG scheme. The standard $L^2$ projection of a function $\zeta$ with $k+1$ continuous derivatives into space $V_h^k$, is denoted by $\mathcal{P}$, that is, for each $I_j$
\begin{equation}\label{projection}
\begin{split}
&\int_{I_j}(\mathcal{P}\zeta - \zeta)\phi\ dx =0, \ \forall \phi \in P^{k}(I_j),
\end{split}
\end{equation}
and the special projections $\mathcal{P}^{\pm}$ into $V_h^k$  satisfy, for each $I_j$
\begin{align}\label{eqn:special_projection}
\int_{I_j}(\mathcal{P}^{+}\zeta - \zeta)\phi\ dx =0, \ \forall \phi \in P^{k-1}(I_j), \ \text{and} \ \mathcal{P}^{+}\zeta(y_{j-\frac{1}{2}}^+) = \zeta({y_{j-\frac{1}{2}}}),\\
\int_{I_j}(\mathcal{P}^{-}\zeta - \zeta)\phi\ dx =0, \ \forall \phi \in P^{k-1}(I_j),\ \text{and} \
\mathcal{P}^{-}\zeta(y_{j+\frac{1}{2}}^-) = \zeta({y_{j+\frac{1}{2}}}).
\end{align}
For the projections mentioned above, it is easy to show \cite{1975_Ciarlet_NH} that
\begin{equation}\label{projection error}
\norm{ \zeta^e}_{L^2(I)} +  h^{\frac{1}{2}} \norm{ \zeta^e}_{L^{\infty}(I)} + h^{\frac{1}{2}}\norm{ \zeta^e}_{L^2({\partial I})}  \leq Ch^{k+1},
\end{equation}
where $\zeta^e =\zeta -\mathcal{P}\zeta  $ or $\zeta^e =\zeta -\mathcal{P}^{\pm}\zeta  $ , and the positive constant $C$ only depends on $\zeta$. There is an inverse inequality we will use in the subsequent proof. For $\forall u_h \in V_h^k$, there exists a positive constant $\sigma$ (we call it the inverse constant), such that
\begin{equation}\label{eqn:inverse inequality}
\norm{u_h}_{L^2(\partial{I})} \leq \sigma h^{-\frac{1}{2}}\norm{u_h}_{L^2(I)},
\end{equation}
where $\norm{u_h}_{L^2(\partial{I})}  = \sum\limits_{j=1}^{N+1}\sqrt{((u_h)_{j+\frac{1}{2}}^-)^2 +((u_h)_{j-\frac{1}{2}}^+)^2}$.
%For the Dirichlet boundary, i.e. $(u_h)_{\frac{1}{2}}^- = (u_h)_{N+\frac{1}{2}}^+ = 0$, then the jump $\jump{u_h}_{\frac{1}{2}} = (u_h)^+_{\frac{1}{2}}$ and  $\jump{u_h}_{N+\frac{1}{2}} = -(u_h)^-_{N+\frac{1}{2}}$.

First, we write the error equations of the $H_1$ conserved DG scheme as follows:
\begin{align}
\label{eqn:error_eqn1} ((\rho-\rho_h)_s,\varphi)_{I_j} &= -(u\omega - u_h\omega_h,\varphi)_{I_j},\\
\label{eqn:error_eqn2}((\omega-\omega_h)_s, \phi)_{I_j}& =(\rho u - \rho_hu_h, \phi)_{I_j},\\
(\omega - \omega_h,\psi )_{I_j} &= <\widehat{u - u_h},\psi>_{I_j} - (u-u_h,\psi_y)_{I_j},
\end{align}
and denote
\begin{equation}\label{def: xi and eta}
\begin{split}
 & \eta^{u} = u - \mathcal{P}^+u , \ \xi^{u} = \mathcal{P}^+u - u_h ,\\
 &\eta^{\rho} = \rho - \mathcal{P}\rho ,\ \  \xi^{\rho} = \mathcal{P}\rho - \rho_h,\\
 &\eta^{\omega} =\omega - \mathcal{P}\omega , \ \xi^{\omega} = \mathcal{P}\omega - \omega_h.
\end{split}
\end{equation}
To deal with the term $\xi^u$, we need to establish a relationship between $\xi^u$ and $\xi^{\omega}$ in following lemma.

\begin{lemma}\label{lemma:H1_conserved} The $\xi^u,\xi^{\omega},\eta^{\omega}$ are defined in \eqref{def: xi and eta}, then there exists a positive constant $C_{\sigma, p}$ independent of $h$ but depending on inverse constant $\sigma$ and Poincar\'{e} constant $C_p$, such that
\begin{equation}\label{eqn:4.17}
\norm{\xi^u}_{L^2(I)} \leq  C_{\sigma, p}(\norm{\xi^{\omega}}_{L^2(I)} + \norm{\eta^{\omega}}_{L^2(I)}).
\end{equation}
\end{lemma}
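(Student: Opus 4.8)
The plan is to convert the third error equation into a relation that involves only $\xi^u$ and the total error $\omega-\omega_h=\eta^\omega+\xi^\omega$, and then to recover $\norm{\xi^u}_{L^2(I)}$ through a discrete Poincar\'e-type argument. First I would substitute $u-u_h=\eta^u+\xi^u$ and $\omega-\omega_h=\eta^\omega+\xi^\omega$ into the third error equation and exploit the two defining properties of the special projection $\mathcal P^+$ in \eqref{eqn:special_projection}: the orthogonality against $P^{k-1}(I_j)$ kills the volume term $(\eta^u,\psi_y)_{I_j}$ since $\psi_y\in P^{k-1}(I_j)$, while the matching condition $\mathcal P^+u(y_{j-\frac12}^+)=u(y_{j-\frac12})$, combined with the flux $\widehat{u_h}=u_h^+$, forces $\widehat{u-u_h}=(\xi^u)^+$ because $(\eta^u)^+$ vanishes at every interface. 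This produces the clean identity
\begin{equation*}
(\omega-\omega_h,\psi)_{I_j}=\langle (\xi^u)^+,\psi\rangle_{I_j}-(\xi^u,\psi_y)_{I_j},\qquad\forall\,\psi\in V_h^k. \tag{$\ast$}
\end{equation*}
Working with the undifferentiated error $\omega-\omega_h$ and splitting $\norm{\omega-\omega_h}_{L^2(I)}\le\norm{\eta^\omega}_{L^2(I)}+\norm{\xi^\omega}_{L^2(I)}$ only at the end is what makes both terms appear on the right-hand side of \eqref{eqn:4.17}.

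Next I would extract two consequences of $(\ast)$. Taking $\psi=\xi^u$ and summing over all cells (with periodic or homogeneous boundary data) collapses the interface contributions into a single negative square, giving the jump control
\begin{equation*}
\tfrac12\sum_j\jump{\xi^u}_{j+\frac12}^2=-(\omega-\omega_h,\xi^u)_I\le\norm{\omega-\omega_h}_{L^2(I)}\,\norm{\xi^u}_{L^2(I)}.
\end{equation*}
Separately, I would record the crucial cancellation that for any \emph{continuous} $\psi\in V_h^k$ the interface terms in the summed $(\ast)$ telescope to zero, leaving the faithful discrete integration-by-parts identity
\begin{equation*}
(\omega-\omega_h,\psi)_I=-(\xi^u,\psi_y)_I. \tag{$\heartsuit$}
\end{equation*}

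Identity $(\heartsuit)$ is the engine of the Poincar\'e step. I would build the test function from the antiderivative of $\xi^u$: letting $c$ denote the mean of $\xi^u$ and $q$ the continuous piecewise-polynomial primitive of $\xi^u-c$, one has $q_y=\xi^u-c$, $q$ vanishing at both endpoints, and hence $\norm{q}_{L^2(I)}\le C_p\norm{\xi^u-c}_{L^2(I)}\le C_p\norm{\xi^u}_{L^2(I)}$ by the continuous Poincar\'e inequality. Feeding a $V_h^k$ representative of $q$ into $(\heartsuit)$ turns the right-hand side into $\norm{\xi^u-c}_{L^2(I)}^2$ up to a projection residual, while the left-hand side is bounded by $\norm{\omega-\omega_h}_{L^2(I)}\norm{q}_{L^2(I)}\le C_p(\norm{\eta^\omega}_{L^2(I)}+\norm{\xi^\omega}_{L^2(I)})\norm{\xi^u}_{L^2(I)}$; dividing through yields \eqref{eqn:4.17}. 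Since $q$ has degree $k+1$ while $(\heartsuit)$ only admits test functions of degree $k$, replacing $q$ by its projection leaves a remainder supported on the highest-degree and interface modes, and it is exactly here that the inverse inequality \eqref{eqn:inverse inequality} and the jump control above are invoked to absorb that remainder into $\norm{\xi^u}_{L^2(I)}$ with an $h$-independent constant, producing the combined constant $C_{\sigma,p}$.

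The main obstacle is precisely this last point: controlling the \emph{full} $L^2$ norm of $\xi^u$ — including its top Legendre mode in each cell and its constant mode — uniformly in $h$. The discrete gradient defined by $(\ast)$ annihilates constants and sees $\xi^u$ only through degree-$(k-1)$ test derivatives, so the highest mode and the mean are invisible to $(\heartsuit)$ and must be recovered indirectly from the jump information. The delicate part is to arrange the estimates so that the negative powers of $h$ coming from $\eqref{eqn:inverse inequality}$ cancel against the smallness of the jumps rather than accumulate, and the residual constant-mode ambiguity is removed by the boundary conditions (homogeneous Dirichlet or decay at infinity), under which the Poincar\'e inequality applies without subtracting the mean.
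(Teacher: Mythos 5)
Your preparatory work is correct: the properties of $\mathcal{P}^+$ and the flux $\widehat{u_h}=u_h^+$ do give $\widehat{u-u_h}=(\xi^u)^+$ and annihilate $(\eta^u,\psi_y)_{I_j}$, so your identity $(\ast)$ holds, and both consequences you draw from it (the jump identity with $\psi=\xi^u$ and the identity $(\heartsuit)$ for continuous test functions) are valid. The gap is that these two consequences are \emph{strictly weaker} than $(\ast)$, and they genuinely cannot imply \eqref{eqn:4.17}: they carry no information about the top Legendre mode of $\xi^u$. Concretely, take $\xi^u|_{I_j}=c_j L_k(\zeta_j)$ with $c_{j+1}=(-1)^k c_j$, $c_1\neq 0$ (for even $k$, simply $c_j\equiv c$); this function is continuous across all interfaces, so every jump $\jump{\xi^u}$ vanishes, and it is $L^2$-orthogonal to every piecewise polynomial of degree $k-1$, hence to $\psi_y$ for every continuous $\psi\in V_h^k$. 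Therefore the pair $\bigl(\xi^u,\ \omega-\omega_h\equiv 0\bigr)$ satisfies both of your extracted identities exactly, while $\norm{\xi^u}_{L^2(I)}$ is arbitrarily large. Any argument that uses only the jump control, the primitive/Poincar\'{e} step and inverse inequalities is applied to exactly these two facts, so it cannot conclude. Your own final step shows where this bites: in this example the mean $c$ of $\xi^u$ is zero, and the projection remainder $(\xi^u,(q-\Pi q)_y)_I$ (with $\Pi q$ your degree-$k$ representative of the primitive $q$) equals $\norm{\xi^u}_{L^2(I)}^2$ identically, so there is nothing left to absorb it into --- it \emph{is} the quantity being estimated.

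What your plan discards is the freedom to test $(\ast)$ with \emph{discontinuous, locally supported} test functions, and that is precisely where the missing control lives: in the example above, testing $(\ast)$ on a single cell (with $\psi|_{I_j}=\zeta_j$ for even $k$, or $\psi|_{I_j}=1$ for odd $k$, and $\psi=0$ elsewhere) yields $(\omega-\omega_h,\psi)_{I_j}=2c_j$, forcing $\abs{c_j}\leq Ch^{1/2}\norm{\omega-\omega_h}_{L^2(I_j)}$, i.e. the full equation does see the top mode even though your two corollaries of it do not. This cell-by-cell testing is exactly how the paper's proof works: it invokes inequality (4.17) of \cite{Wang_2015_SJNA} --- proved by local test-function constructions of the same flavor as the choice $\phi=\xi^u_y-(\xi^u_y)^-_{j+\frac{1}{2}}L_{k+1}(\zeta)$ in the proof of Lemma \ref{lemma:integration} --- to obtain $\norm{\xi^u_y}_{L^2(I)}+h^{-\frac{1}{2}}\norm{\jump{\xi^u}}_{L^2(\partial I)}\leq C_{\sigma}(\norm{\xi^{\omega}}_{L^2(I)}+\norm{\eta^{\omega}}_{L^2(I)})$, and then concludes with the broken Poincar\'{e}--Friedrichs inequality. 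To repair your proof you would need to reinstate such local choices of $\psi$ to bound the per-cell highest mode (after which the duality construction with the primitive $q$ becomes superfluous).
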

\begin{proof}
By Poincar\'{e} -Friedrichs inequality in Chapter $10$ of \cite{2007_brenner_SSBM}, we have
\begin{align*}
\norm{\xi^u}_{L^2(I)} &\leq C_p \Big[\norm{\xi^u_y}_{L^2(I)} + h^{-\frac{1}{2}}\norm{\jump{\xi^u}}_{L^2(\partial I)}            \Big]%\\
%&\leq C_p \Big[\abs{\int_{\partial\I} \xi_u dL} + \norm{\xi^u_y} + h^{-\frac{1}{2}}\abs{\jump{\xi^u}}\Big]
\end{align*}
where
\begin{align*}
\norm{\xi^u_y}_{L^2(I)} = \Big(\sum\limits_{I_j\in \mathcal{T}_h} \int_{I_j}(\xi^u_y)^2 dy\Big)^{\frac{1}{2}},\; \; \norm{\jump{\xi^u}}_{L^2(\partial I)}  = \Big(\sum\limits_{j=1}^{N+1} \jump{\xi^u}_{j-\frac{1}{2}}^2\Big)^{\frac{1}{2}}.
\end{align*}
The inequality (4.17) in \cite{Wang_2015_SJNA} gives
\begin{align*}
\Big[ \norm{\xi^u_y}_{L^2(I)} + h^{-\frac{1}{2}}\norm{\jump{\xi^u}}_{L^2(\partial I)}             \Big]
        \leq C_{\sigma}(\norm{\xi^{\omega}}_{L^2(I)}+\norm{\eta^{\omega}}_{L^2(I)} ),
\end{align*}
which implies that
\begin{align*}
\norm{\xi^u}_{L^2(I)} \leq  C_{\sigma, p}(\norm{\xi^{\omega}}_{L^2(I)} + \norm{\eta^{\omega}}_{L^2(I)}).
\end{align*}
\end{proof}

\begin{theorem}\label{thm:H1_conserved}
Assume that the system \eqref{eqn:H1_CD} with Dirichlet boundary condition has a smooth solution $u, \rho, \omega $. Let $u_h, \rho_h, \omega_h$ be the numerical solution of the semi-discrete DG scheme \eqref{eqn:numerical_conserved_DG_CD}. And there exists that initial conditions $u_h^0, \omega_h^0, \rho_h^0$ satisfy the  following approximation property
\begin{equation}
\norm{u^0 -u_h^0}_{L^2(I)}  + \norm{\rho^0 -\rho_h^0}_{L^2(I)}  + \norm{\omega^0 -\omega_h^0}_{L^2(I)}  \leq Ch^{k+1}.
\end{equation}
Then for regular triangulations of $I = (y_L, y_R)$, and  the finite element
space $V^k_h$ with $k \geq 0$, there holds the following error estimate
\begin{equation}
\norm{u-u_h}_{L^2(I)} + \norm{\omega-\omega_h}_{L^2(I)}  + \norm{\rho - \rho_h}_{L^2(I)}  \leq Ch^{k+1},
\end{equation}
where the positive constant $C$ depends on the final time $T$ and the exact solutions.
\end{theorem}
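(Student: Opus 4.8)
My plan is to use the standard energy method for LDG error estimates together with a continuation (bootstrap) argument to control the nonlinearities. I would start from the error equations \eqref{eqn:error_eqn1}, \eqref{eqn:error_eqn2} and the third ($\omega=u_y$) error equation, which hold by Galerkin orthogonality because the smooth exact solution, being continuous, satisfies the scheme with $\widehat{u}=u^+=u$. Following \eqref{def: xi and eta} I split $\rho-\rho_h=\eta^\rho-\xi^\rho$, $\omega-\omega_h=\eta^\omega-\xi^\omega$, $u-u_h=\eta^u-\xi^u$, using the standard projection $\mathcal{P}$ for $\rho,\omega$ and $\mathcal{P}^+$ for $u$. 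The point of choosing $\mathcal{P}^+$ is that with the flux $\widehat{u_h}=u_h^+$ the interface contributions $<\widehat{\eta^u},\psi>$ vanish identically (since $\eta^u(y_{j-\frac12}^+)=0$), while $(\eta^u,\psi_y)=0$ and $(\eta^\omega,\psi)=0$ by orthogonality to $P^{k-1}$ and $P^k$. Substituting into the third error equation then yields the purely discrete relation $(\xi^\omega,\psi)=<\widehat{\xi^u},\psi>-(\xi^u,\psi_y)$, which is exactly the hypothesis of Lemma \ref{lemma:H1_conserved}, giving $\norm{\xi^u}_{L^2(I)}\le C_{\sigma,p}(\norm{\xi^\omega}_{L^2(I)}+\norm{\eta^\omega}_{L^2(I)})$. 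This is the device that controls $u$, which carries no evolution equation of its own.

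Next I would derive the energy identity for the discrete errors. Taking $\varphi=\xi^\rho$ in \eqref{eqn:error_eqn1} and $\phi=\xi^\omega$ in \eqref{eqn:error_eqn2}, using $(\eta^\rho_s,\xi^\rho)=0$ and $(\eta^\omega_s,\xi^\omega)=0$ (the projections commute with $\partial_s$), and summing over all cells gives
\begin{equation*}
\frac{1}{2}\frac{d}{ds}\bigl(\norm{\xi^\rho}_{L^2(I)}^2+\norm{\xi^\omega}_{L^2(I)}^2\bigr)=(u\omega-u_h\omega_h,\xi^\rho)_I-(\rho u-\rho_h u_h,\xi^\omega)_I.
\end{equation*}
Writing $u\omega-u_h\omega_h=u\,e^\omega+\omega\,e^u-e^u e^\omega$ and $\rho u-\rho_h u_h=\rho\,e^u+u\,e^\rho-e^\rho e^u$ with $e^\bullet=\eta^\bullet-\xi^\bullet$, I would split the right-hand side into three kinds of terms and bound each.

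The estimate of these terms is the technical core. Terms linear in a single $\xi$ (such as $(u\xi^\omega,\xi^\rho)$, $(\omega\xi^u,\xi^\rho)$, $(\rho\xi^u,\xi^\omega)$) are bounded using $\norm{u}_{L^\infty},\norm{\omega}_{L^\infty},\norm{\rho}_{L^\infty}\le C$ and, whenever $\xi^u$ appears, Lemma \ref{lemma:H1_conserved}, yielding $C(\norm{\xi^\rho}^2+\norm{\xi^\omega}^2+\norm{\eta^\omega}^2)$. Terms carrying a projection error $\eta^\bullet$ are bounded by $Ch^{k+1}(\norm{\xi^\rho}+\norm{\xi^\omega})$ via \eqref{projection error} and Young's inequality. \textbf{The main obstacle} is the genuinely nonlinear remainders $(e^u e^\omega,\xi^\rho)$ and $(e^\rho e^u,\xi^\omega)$, which after expansion are cubic in the errors: controlling e.g. $(\xi^u\xi^\omega,\xi^\rho)$ needs an $L^\infty$ bound on one factor. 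To supply it I would run a continuation argument: assume a priori on a maximal subinterval that the $L^2$ errors stay of size $h^{k+1}$, so that by an inverse inequality for $V_h^k$ together with Lemma \ref{lemma:H1_conserved} one gets $\norm{\xi^u}_{L^\infty}\le C\sigma h^{-\frac12}\norm{\xi^u}_{L^2}\to 0$ (for $k\ge 0$); hence all such factors are bounded and the cubic terms are absorbed into $C(\norm{\xi^\rho}^2+\norm{\xi^\omega}^2)$.

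With every term bounded, I obtain $\frac{d}{ds}(\norm{\xi^\rho}^2+\norm{\xi^\omega}^2)\le C(\norm{\xi^\rho}^2+\norm{\xi^\omega}^2)+Ch^{2(k+1)}$. Gronwall's inequality with the assumed initial approximation property yields $\norm{\xi^\rho}^2+\norm{\xi^\omega}^2\le Ch^{2(k+1)}$ on $[0,T]$, with $C$ depending on $T$ and the exact solution; this simultaneously closes the bootstrap, recovering the a priori assumption with a strictly better constant for $h$ small, so the estimate extends from the maximal subinterval to all of $[0,T]$. Finally I would invoke Lemma \ref{lemma:H1_conserved} once more to obtain $\norm{\xi^u}\le Ch^{k+1}$, and combine the $\xi$-bounds with the projection estimates \eqref{projection error} through the triangle inequality to conclude the optimal $O(h^{k+1})$ rates for $u,\omega,\rho$ in $L^2(I)$. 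The Dirichlet boundary data are used so that the summed interface terms telescope without leftover boundary residue.
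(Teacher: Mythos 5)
Your proposal is correct, and its skeleton — the error equations, the $\xi$/$\eta$ splitting with $\mathcal{P}^{+}$ for $u$ and $\mathcal{P}$ for $\rho,\omega$, Lemma \ref{lemma:H1_conserved} to control $\xi^u$, the energy estimate with test functions $\xi^\rho,\xi^\omega$, Gronwall, and the final triangle inequality — is the same as the paper's. Where you genuinely diverge is in the treatment of the nonlinear terms, which is the technical core. You expand around the exact solution, $u\omega-u_h\omega_h=u(\omega-\omega_h)+\omega(u-u_h)-(u-u_h)(\omega-\omega_h)$, which leaves quadratic remainders such as $(\xi^u\xi^\omega,\xi^\rho)$; to absorb these you need an $L^\infty$ inverse estimate on $\xi^u$ plus a bootstrap/continuation argument, so your conclusion holds only for $h$ sufficiently small. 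The paper instead splits with the \emph{numerical} solution as one coefficient: $u\omega-u_h\omega_h=\omega(u-u_h)+u_h(\omega-\omega_h)$ and $\rho u-\rho_hu_h=\rho(u-u_h)+u_h(\rho-\rho_h)$. After testing with $\xi^\rho$ and $\xi^\omega$, the two dangerous terms $-(u_h\xi^\omega,\xi^\rho)$ and $+(u_h\xi^\rho,\xi^\omega)$ cancel \emph{exactly} — this skew-symmetry is the same structure that makes the scheme $H_1$-conservative — and the remaining occurrences of $\omega,\rho,u_h$ are traded for $\mathcal{P}\omega,\mathcal{P}\rho,u$ by an exact algebraic identity, since the two substitutions produce errors that cancel each other. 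Consequently the paper's differential inequality is linear in $\norm{\xi^\rho}^2+\norm{\xi^\omega}^2$ from the start: no quadratic remainder, no inverse inequality, no smallness condition on $h$, and no continuation argument. Your route is more generic (it would survive nonlinearities lacking this skew structure), while the paper's exploits the conservative structure to get an unconditional and shorter proof. One small slip to fix: with the paper's definitions \eqref{def: xi and eta} one has $\rho-\rho_h=\eta^\rho+\xi^\rho$, not $\eta^\rho-\xi^\rho$; your signs are internally consistent only if you redefine $\xi^\bullet$ with the opposite sign, so you should either do that explicitly or flip the signs in your energy identity.
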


\begin{proof}
We rewrite the error equation \eqref{eqn:error_eqn1}, \eqref{eqn:error_eqn2} as
\begin{equation}
\begin{split}
((\xi^{\rho}+ \eta^{\rho})_s,\varphi)_{I_j} & = (-u\omega + u_h\omega_h,\varphi)_{I_j} \\
                                   %& = (-u\omega + u_h\omega - u_h\omega +u_h\omega_h, \varphi)\\
%                                   & = (-\omega(u-u_h) - u_h(\omega- \omega_h),\varphi) \\
                                   & = (-\omega(\xi^u+\eta^u)- u_h(\xi^{\omega}+ \eta^{\omega}), \varphi)_{I_j},\\
((\xi^{\omega}+ \eta^{\omega})_s,\phi)_{I_j}& = (\rho u- \rho_hu_h,\phi)_{I_j} \\
                                   % & =  (\rho u - \rho u_h + \rho u_h -\rho_h u_h,\phi)\\
%                                   & =  (\rho(u-u_h) + u_h(\rho-\rho_h),\phi) \\
                                   & =  (\rho(\xi^u+\eta^u)_{I_j} + u_h(\xi^{\rho}+\eta^{\rho}), \phi)_{I_j}.\\                                     \end{split}
\end{equation}
Taking the test functions $\varphi = \xi^{\rho}, \phi = \xi^{\omega}$, we have
\begin{equation}
\begin{split}
(\xi^{\rho}_s, \xi^{\rho})_{I_j} = &(-\eta^{\rho}_s, \xi^{\rho})_{I_j} - (\omega(\xi^u+\eta^u)+ u_h(\xi^{\omega}+ \eta^{\omega}), \xi^{\rho})_{I_j},\\
(\xi^{\omega}_s, \xi^{\omega})_{I_j} = &(-\eta^{\omega}_s, \xi^{\omega})_{I_j} + (\rho(\xi^u+\eta^u) + u_h(\xi^{\rho}+\eta^{\rho}), \xi^{\omega})_{I_j}.
\end{split}
\end{equation}
Summing up  over all interval $I_j$ and omitting the subscript $I$,  we obtain
\begin{equation}\label{error_1}
\begin{split}
(\xi^{\rho}_s, \xi^{\rho})+(\xi^{\omega}_s, \xi^{\omega}) &= (-\eta^{\rho}_s - \omega(\xi^u+\eta^u)-u_h\eta^{\omega} , \xi^{\rho}) + (-\eta^{\omega}_s + \rho(\xi^u+\eta^u)+\eta^{\rho}u_h, \xi^{\omega})\\
&     =(-\eta^{\rho}_s - \mathcal{P}\omega(\xi^u+\eta^u)-u\eta^{\omega} , \xi^{\rho}) + (-\eta^{\omega}_s + \mathcal{P}\rho(\xi^u+\eta^u)+u\eta^{\rho}, \xi^{\omega})\\
& = (A,\xi^{\rho}) + (B,\xi^{\omega}) - (\mathcal{P}\omega\xi^u,\xi^{\rho})+ (\mathcal{P}\rho\xi^u,\xi^{\omega}).
\end{split}
\end{equation}
By the Cauchy-Schwarz and arithmetic-geometric mean inequalities, the equation becomes
\begin{equation}\label{error_2}
\begin{split}
\frac{d}{dt}(\norm{\xi^\omega}_{L^2(I)}^2 + \norm{\xi^\rho}_{L^2(I)}^2)   \leq K + \frac{1}{2}( 1+ C_{\omega,\rho})(\norm{\xi^{\omega}}_{L^2(I)}^2 + \norm{\xi^{\rho}}_{L^2(I)}^2) + C_{\omega,\rho}\norm{\xi^u}_{L^2(I)}^2
\end{split}
\end{equation}
where
\begin{align}
&A = -\eta^{\rho}_s - \mathcal{P}\omega\eta^u - u\eta^{\omega},\ B = -\eta^{\omega}_s + \mathcal{P}\rho\eta^u+u\eta^{\rho},\\
&K = \frac{1}{2}(\norm{A}_{L^2(I)}^2+ \norm{B}_{L^2(I)}^2), \ C_{\omega, \rho} =\max(c_{\rho},c_{\omega}).
\end{align}
Here, we need to interpret the constants we mentioned above. We denote by $C_{\ast}$ all positive constants independent of $h$, which depends on the subscript ${\ast}$.

Using Lemma \ref{lemma:H1_conserved} in the above inequality \eqref{error_2}, we get
\begin{equation}\label{error_3}
\begin{split}
\frac{d}{dt}(\norm{\xi^\omega}_{L^2(I)}^2 + \norm{\xi^\rho}_{L^2(I)}^2) &\leq \tilde{K} + \tilde{C_{\sigma,\omega,\rho}}(\norm{\xi^\omega}_{L^2(I)}^2 + \norm{\xi^\rho}_{L^2(I)}^2),
\end{split}
\end{equation}
where
\begin{equation}
\tilde{K} = \frac{1}{2}(\norm{A}_{L^2(I)}^2+ \norm{B}_{L^2(I)}^2) + C_{\sigma,\omega,\rho}\norm{\eta^\omega}_{L^2(I)}^2 , \ \tilde{C_{\sigma,\omega,\rho}} = \frac{1}{2}(1 +C_{\omega,\rho} + C_{\sigma,\omega,\rho}).
\end{equation}
By the Gronwall inequality, we have
\begin{align*}
\norm{\xi^\omega}_{L^2(I)}^2 + \norm{\xi^\rho}_{L^2(I)}^2 \leq  Ch^{2k+2}.
\end{align*}
 Lemma \ref{lemma:H1_conserved} also implies that
\begin{align*}
\norm{\xi^u}_{L^2(I)}^2  \leq  Ch^{2k+2}.
\end{align*}
Then Theorem \ref{thm:H1_conserved} follows by the triangle inequality and the interpolating property.
\end{proof}

\subsubsection{Integration DG scheme}\label{subsubsec:integration}
Instead of the scheme \eqref{eqn:numerical_conserved_DG_CD_3} to solve $u_h$ from $\omega_h$, we can also integrate the equation $u_y = \omega$ directly referring to \cite{Xu2010_JCM}. We give an integration DG scheme defined as follows: Find $u_h \in V_h^{k+1}$, $\rho_h, \omega_h \in V_h^{k}$, such that, for all test functions $\phi$, $\varphi \in V_h^{k}$ and $I_j\in\mathcal{T}_h$

\begin{subnumcases}{\label{eqn: H1_integration_numerical_scheme}}
((\rho_h)_s, \phi)_{I_j} + (u_h\omega_h,\phi)_{I_j}  = 0, \\
((\omega_h)_{s}, \varphi)_{I_j} = (\rho_h u_h, \varphi)_{I_j}, \\
 u_h(y,s})\mid_{I_j}  = u_h({y_{j+\frac{1}{2}},s) - \int_y^{y_{j+\frac{1}{2}}}\omega_h(\xi,s) \ d\xi
\end{subnumcases}
with the boundary condition $u_h(y_{N+\frac{1}{2}},s) = u(y_R,s)$. Here, $ u_h $ is not longer in $V_h^k$ space but in $V_h^{k+1}$ space and continuous. Numerically, this scheme can obtain the optimal order of accuracy, i.e. $(k+2)$-$th$ order for $u_h$, and $(k+1)$-$th$ order for $\rho_h$.
\begin{remark}
It is notable that our integration DG scheme \eqref{eqn: H1_integration_numerical_scheme} is based on the $H_1$ conserved DG scheme \eqref{eqn:numerical_conserved_DG_CD}. Actually, following the $H_0$ conserved DG scheme \eqref{eqn: CD_numerical-scheme}, we have another integration DG scheme:
Find  $ u_h \in V_h^{k+1}$, $ \rho_h $, $\omega_h$, $\gamma_h\in V_h^k $  such that
\begin{subnumcases}{}
((\rho_h)_s, \phi)_{I_j} + <\widehat{\gamma_h}, \phi>_{I_j} - (\gamma_h, \phi_y)_{I_j} = 0, \\
((\omega_h)_{s}, \varphi)_{I_j} = (\rho_h u_h, \varphi)_{I_j},\\
(\gamma_h,\eta)_{I_j} = (\frac{1}{2}u_h^2, \eta)_{I_j},\label{eqn:L2-projection}\\
u_h(y,s})\mid_{I_j}  = u_h({y_{j+\frac{1}{2}},s) - \int_y^{y_{j+\frac{1}{2}}}\omega_h(\xi,s) \ d\xi,
\end{subnumcases}
for all test functions $ \phi $, $ \varphi $, $\eta$ $ \in V_h^k $ and $I_j \in \mathcal{T}_h$.
The only difference between those two integration DG schemes is the $L^2$ projection \eqref{eqn:L2-projection} . Numerically, there is little difference on accuracy and conserved quantities $H_0$, $H_1$. In the following sections, the integration DG scheme we mention refers to the numerical scheme \eqref{eqn: H1_integration_numerical_scheme}.
\end{remark}
To prove the error estimate of this integration scheme, we introduce following lemma to build the relationship between $\xi^u$ and $\xi^\omega$.

\begin{lemma}\label{lemma:integration}
In this lemma, $u,\omega$ is the exact solution of CD system \eqref{eqn:H1_CD} with Dirichlet boundary, $\omega$ is the derivative of $u$ with respect to $y$. The numerical solutions $u_h, \omega_h$ of the integration DG scheme \eqref{eqn: H1_integration_numerical_scheme} for CD system satisfy $(u_h)_y = \omega_h$, and $\xi^u,\xi^{\omega},\eta^{u},\eta^{\omega}$ are defined in \eqref{def: xi and eta}, then we have the following relationship:
\begin{equation}\label{eqn:integration_lemma}
\norm{\xi^u_y}_{L^2(I)} + h^{-\frac{1}{2}}\norm{\jump{\xi^u}}_{L^2(I)} \leq C_{\sigma}(\norm{\xi^\omega}_{L^2(I)}+\norm{\eta^\omega}_{L^2(I)} + h^{-1}\norm{\eta^u}_{L^2(I)})
\end{equation}
where the positive constant $C_\sigma$ depends on the inverse constant $\sigma$.
\end{lemma}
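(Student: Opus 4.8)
The plan is to exploit the two features that distinguish the integration scheme \eqref{eqn: H1_integration_numerical_scheme} from the LDG scheme \eqref{eqn:numerical_conserved_DG_CD}: here $u_h$ is \emph{globally continuous} and lives in $V_h^{k+1}$, and it satisfies the \emph{strong} (pointwise) relation $(u_h)_y=\omega_h$ rather than a weak LDG derivative with a numerical flux. Consequently the Wang--Shu-type estimate used in Lemma \ref{lemma:H1_conserved} is no longer the appropriate tool, and instead \eqref{eqn:integration_lemma} can be reached by a direct computation whose price is the extra term $h^{-1}\norm{\eta^u}_{L^2(I)}$ recording the mismatch between the degree-$(k+1)$ space carrying $u_h$ and the degree-$k$ space carrying $\omega_h$. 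First I would establish a pointwise identity for $\xi^u_y$. With $\mathcal{P}^+$ taken as the left Radau projection into $V_h^{k+1}$, differentiating $\xi^u=\mathcal{P}^+u-u_h$ and combining $(u_h)_y=\omega_h$ with the exact relation $u_y=\omega$ gives
\[
\xi^u_y=(\mathcal{P}^+u)_y-\omega_h=(\omega-\eta^u_y)-(\omega-\eta^\omega-\xi^\omega)=\xi^\omega+\eta^\omega-\eta^u_y,
\]
so that $\norm{\xi^u_y}_{L^2(I)}\le\norm{\xi^\omega}_{L^2(I)}+\norm{\eta^\omega}_{L^2(I)}+\norm{\eta^u_y}_{L^2(I)}$.

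Next I would treat the jump term. Since $u$ is smooth and $u_h$ is continuous across every interface, $\jump{\xi^u}=\jump{\mathcal{P}^+u}$; and because $\mathcal{P}^+$ matches $u$ at the left endpoint of each cell (property \eqref{eqn:special_projection}, whence $\eta^u$ vanishes there), one finds $\jump{\xi^u}_{j+\frac12}=(\eta^u)^-_{j+\frac12}$, giving $\norm{\jump{\xi^u}}_{L^2(\partial I)}\le\norm{\eta^u}_{L^2(\partial I)}$. It then remains to absorb the two projection-error quantities $\norm{\eta^u_y}_{L^2(I)}$ and $h^{-\frac12}\norm{\eta^u}_{L^2(\partial I)}$ into the single term $h^{-1}\norm{\eta^u}_{L^2(I)}$. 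The clean device for the derivative norm is to introduce the \emph{polynomial} $r:=(\mathcal{P}^+u)_y-\mathcal{P}\omega\in V_h^{k}$, so that $\eta^u_y=\eta^\omega-r$. Testing the orthogonality relations of $\mathcal{P}^+$ and $\mathcal{P}$ against $\phi\in P^k(I_j)$ yields the identity $(r,\phi)_{I_j}=-(\eta^u)^-_{j+\frac12}\,\phi(y_{j+\frac12}^-)$; choosing $\phi=r$ and applying the inverse inequality \eqref{eqn:inverse inequality} to $r$ (legitimate, since $r$ is a polynomial) produces $\norm{r}_{L^2(I_j)}\le\sigma h^{-\frac12}\abs{(\eta^u)^-_{j+\frac12}}$, which is exactly where the inverse constant $\sigma$ enters $C_\sigma$. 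Hence both the derivative contribution and the jump contribution collapse onto the single boundary quantity $h^{-\frac12}\norm{\eta^u}_{L^2(\partial I)}$, up to the harmless $\norm{\eta^\omega}_{L^2(I)}$ term already present on the right-hand side.

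The hard part will be this final conversion, bounding $h^{-\frac12}\norm{\eta^u}_{L^2(\partial I)}$ by $C_{\sigma}h^{-1}\norm{\eta^u}_{L^2(I)}$. Because $\eta^u$ is not a polynomial, the inverse inequality cannot be applied to it directly, and the naive scaled trace estimate $\abs{(\eta^u)^-_{j+\frac12}}\le C(h^{-\frac12}\norm{\eta^u}_{L^2(I_j)}+h^{\frac12}\norm{\eta^u_y}_{L^2(I_j)})$ reintroduces $\norm{\eta^u_y}$, which through $r$ loops back to the very boundary value one is trying to control; this circularity is the genuine obstacle. My plan is to break it by invoking the sharp approximation properties \eqref{projection error} of the Radau projection, under which the boundary, derivative, and scaled interior norms of $\eta^u$ are all of the same order $h^{-1}\norm{\eta^u}_{L^2(I)}$, so that $h^{-\frac12}\norm{\eta^u}_{L^2(\partial I)}$ and $\norm{\eta^u_y}_{L^2(I)}$ are each dominated by $C_{\sigma}h^{-1}\norm{\eta^u}_{L^2(I)}$. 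Collecting the estimates from the three displays then gives \eqref{eqn:integration_lemma}, with $C_\sigma$ depending only on $\sigma$ through the bound on $r$. I expect the delicate bookkeeping to lie precisely in making this last step rigorous, i.e. in pinning down the constant so that the projection-error boundary and derivative norms are controlled by the interior $L^2$-norm at the sharp power $h^{-1}$.
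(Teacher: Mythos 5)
Your proposal is correct and reaches \eqref{eqn:integration_lemma} by a route that differs from the paper's in how the derivative term is handled; the jump term is treated identically (continuity of $u$ and $u_h$ reduces $\jump{\xi^u}_{j+\frac12}$ to the single trace $(\eta^u)^-_{j+\frac12}$, exactly as in the paper). For $\norm{\xi^u_y}_{L^2(I)}$ the paper does not use your pointwise splitting $\xi^u_y=\xi^\omega+\eta^\omega-\eta^u_y$ followed by a separate bound on $\norm{\eta^u_y}_{L^2(I)}$; instead it tests the error equation $\xi^u_y+\eta^u_y=\xi^\omega+\eta^\omega$ against the cell-wise test function $\phi=\xi^u_y-(\xi^u_y)^-_{j+\frac12}L_{k+1}(\zeta)$, whose vanishing right trace, combined with the left-endpoint matching and the orthogonality of $\mathcal{P}^+$, makes $(\eta^u_y,\phi)_{I_j}$ vanish identically after integration by parts. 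This yields the cleaner bound $\norm{\xi^u_y}_{L^2(I)}\leq C_\sigma(\norm{\xi^\omega}_{L^2(I)}+\norm{\eta^\omega}_{L^2(I)})$ with no $\eta^u$ contribution, so in the paper the term $h^{-1}\norm{\eta^u}_{L^2(I)}$ is needed only for the jump estimate. Your auxiliary polynomial $r=(\mathcal{P}^+u)_y-\mathcal{P}\omega$, the identity $(r,\phi)_{I_j}=-(\eta^u)^-_{j+\frac12}\phi(y^-_{j+\frac12})$, and the inverse-inequality bound $\norm{r}_{L^2(I_j)}\leq\sigma h^{-\frac12}\abs{(\eta^u)^-_{j+\frac12}}$ are all valid and exploit the same cancellation mechanism, but executed on the projection error rather than on $\xi^u_y$, so your derivative bound also funnels through the boundary values of $\eta^u$. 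As for the step you rightly flag as the genuine obstacle --- converting $h^{-\frac12}\norm{\eta^u}_{L^2(\partial I)}$ into $Ch^{-1}\norm{\eta^u}_{L^2(I)}$, which no inverse inequality gives since $\eta^u$ is not a polynomial --- the paper faces the very same issue for its jump term and dispatches it with exactly the justification you propose, a one-line appeal to the projection estimates \eqref{projection error} (with the constant implicitly depending on the exact solution). So your argument is no less rigorous than the paper's; it simply leans on that delicate boundary-to-interior conversion twice (derivative and jump) where the paper, thanks to its test-function trick, leans on it only once.
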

\begin{proof}
First we write the error equation
\begin{equation}\label{eqn:H1_error_equation}
\xi^u_y+\eta_y^u = \xi^\omega+\eta^\omega.
\end{equation}
Here we take a test function $\phi$ as follows:
\begin{equation}
\phi(y)_{I_j} = \xi^u_y(y) - (\xi_y^u)_{j+\frac{1}{2}}^-L_{k+1}(\zeta)
\end{equation}
where $L_k$ is the standard Legendre polynomial of degree $k-1$ in $[-1,1]$, $\zeta = 2(y-y_j)/h_j$. We have $L_k(1) = 1$ and $L_k$ is orthogonal to any polynomials with degree at most $k-1$. Therefore we obtain some relevant properties $\phi^-_{j+\frac{1}{2}} = 0$, $(\xi_y^u, \phi)_{I_j} = (\xi_y^u, \xi_y^u)_{I_j}$.
Then we multiply the error equation \eqref{eqn:H1_error_equation} by test function $\phi(y)$ and integrate it over $I_j$, which follows that
\begin{align}
(\xi^u_y, \phi)_{I_j} + (\eta^u_y, \phi)_{I_j} = (\xi^\omega, \phi)_{I_j}+(\eta^\omega, \phi)_{I_j}, \label{eqn:lemma2_11}\\
%&(\xi^u_y, \xi^u_y)_{I_j} + (\eta^u_y, \phi)_{I_j} = (\xi^\omega, \phi)_{I_j}+(\eta^\omega, \phi)_{I_j},\\
(\xi^u_y, \xi^u_y)_{I_j} + (\eta^u)^-_{j+\frac{1}{2}}\phi_{j+\frac{1}{2}}^- - (\eta^u)^+_{j-\frac{1}{2}}\phi_{j-\frac{1}{2}}^+ -(\eta^u, \phi_y)_{I_j} = (\xi^\omega, \phi)_{I_j}+(\eta^\omega, \phi)_{I_j},\\
(\xi^u_y, \xi^u_y)_{I_j} = (\xi^\omega, \phi)_{I_j}+(\eta^\omega, \phi)_{I_j}.
\end{align}
Since $\norm{L_k(\zeta)}_{L^2(I_j)} \leq Ch^{\frac{1}{2}}_{j}$, we use Cauchy-Schwarz inequality and the inverse property \eqref{eqn:inverse inequality}, and obtain
\begin{align}
\norm{\xi^u_y}^2_{L^2(I_j)}&\leq (\norm{\xi^u_y}_{L^2(I_j)}+\abs{(\xi_y^u)_{j+\frac{1}{2}}^-}\norm{L_k(\zeta)}_{L^2(I_j)})(\norm{\xi^\omega}^2_{L^2(I_j)} +\norm{\eta^\omega}^2_{L^2(I_j)}) \nonumber \\
&\leq C_{\sigma}\norm{\xi^u_y}_{L^2(I_j)}(\norm{\xi^\omega}^2_{L^2(I_j)} +\norm{\eta^\omega}^2_{L^2(I_j)}).
\end{align}
Hence we arrive at
\begin{equation}
\norm{\xi^u_y}_{L^2(I_j)}\leq C_{\sigma}(\norm{\xi^\omega}^2_{L^2(I_j)} +\norm{\eta^\omega}^2_{L^2(I_j)}).
\end{equation}
Next, for the boundary term $\jump{\xi^u}_{j-{\frac{1}{2}}}$, the deduction process is as follows:
\begin{align}
&\jump{\xi^u}_{j-\frac{1}{2}} = \jump{\mathcal{P}^+u-u_h}_{j-\frac{1}{2}} = \jump{\mathcal{P}^+u}_{j-\frac{1}{2}}
= \jump{\mathcal{P}^+u-u}_{j-\frac{1}{2}} = \jump{\eta^u}_{j-\frac{1}{2}}.
\end{align}
Here the equalities hold due to the continuity of $u$ and $u_h$. Taking account of the projection error \eqref{projection error}, we have
\begin{equation}
\begin{split}
&\norm{\jump{\xi^u}}_{L^2(I_j)} = \norm{\jump{\eta^u}}_{L^2(I_j)} \leq C h^{-\frac{1}{2}}\norm{\eta^u}_{L^2(I_j)}
\end{split}
\end{equation}
Finally, by summing over all cells $I_j$ it follows the result \eqref{eqn:integration_lemma}\end{proof}

Next, we imitate Theorem \ref{thm:H1_conserved} and give the error estimate for the integration DG scheme \eqref{eqn: H1_integration_numerical_scheme}.
\begin{theorem}\label{thm:integration}
Assume that the system \eqref{eqn:H1_CD} with Dirichlet boundary condition has a smooth solution $u, \rho, \omega $. Let $u_h, \rho_h, \omega_h$ be the numerical solution of the semi-discrete DG scheme \eqref{eqn: H1_integration_numerical_scheme}. And there exists that initial conditions $u_h^0, \omega_h^0, \rho_h^0$ satisfy the  following approximation property
\begin{align}
 &\norm{u^0 -u_h^0}_{L^\infty(I)} + \norm{\rho^0 -\rho_h^0}_{L^2(I)}  + \norm{\omega^0 -\omega_h^0}_{L^2(I)}  \leq Ch^{k+1}.
\end{align}
Then for a regular tessellation of $I = (y_L, y_R)$, and  the finite element
spaces $V^k_h$ and $V^{k+1}_h$  with $k \geq 0$, there holds the following error estimates
\begin{align}
 &\norm{u-u_h}_{L^\infty(I)} + \norm{\omega-\omega_h}_{L^2(I)}  + \norm{\rho - \rho_h}_{L^2(I)}  \leq Ch^{k+1}
 \end{align}
where the positive constant $C$ depends on the final time $T$ and the exact solutions.
\end{theorem}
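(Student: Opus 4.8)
The plan is to follow the energy argument of Theorem \ref{thm:H1_conserved} almost verbatim for the components $\rho$ and $\omega$, and to replace the one structural ingredient that changes, namely Lemma \ref{lemma:H1_conserved}, by Lemma \ref{lemma:integration}. I keep the decomposition \eqref{def: xi and eta}, but now the special projection $\mathcal{P}^+$ is understood as the Gauss--Radau projection into $V_h^{k+1}$ (since $u_h\in V_h^{k+1}$ is continuous), so that $\norm{\eta^u}_{L^2(I)}\le Ch^{k+2}$, while $\eta^\rho,\eta^\omega$ still obey \eqref{projection error} at the rate $h^{k+1}$. First I would record the error equations: the first two lines of \eqref{eqn: H1_integration_numerical_scheme} coincide with those of \eqref{eqn:numerical_conserved_DG_CD}, so \eqref{eqn:error_eqn1}--\eqref{eqn:error_eqn2} carry over unchanged, and the only modification is that the third relation is now the exact pointwise identity $(u_h)_y=\omega_h$.

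Next I would run the energy estimate exactly as in \eqref{error_1}--\eqref{error_2}. Choosing $\varphi=\xi^\rho$, $\phi=\xi^\omega$, summing over all cells, and using the $L^2$ projection property together with the substitutions $\omega\mapsto\mathcal{P}\omega$, $\rho\mapsto\mathcal{P}\rho$ as in Theorem \ref{thm:H1_conserved}, the error identity reduces to the collected remainders $A,B$ plus the two bilinear terms $(\mathcal{P}\omega\,\xi^u,\xi^\rho)$ and $(\mathcal{P}\rho\,\xi^u,\xi^\omega)$. The Cauchy--Schwarz and arithmetic--geometric mean inequalities then reproduce
\begin{equation*}
\frac{d}{ds}\bigl(\norm{\xi^\omega}_{L^2(I)}^2+\norm{\xi^\rho}_{L^2(I)}^2\bigr)\le K+\tfrac12(1+C_{\omega,\rho})\bigl(\norm{\xi^\omega}_{L^2(I)}^2+\norm{\xi^\rho}_{L^2(I)}^2\bigr)+C_{\omega,\rho}\norm{\xi^u}_{L^2(I)}^2,
\end{equation*}
with $K=O(h^{2k+2})$ arising from the projection errors, precisely as in \eqref{error_2}.

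The only genuinely new step is the control of $\norm{\xi^u}_{L^2(I)}$. Here I would combine the Poincar\'{e}--Friedrichs inequality used in Lemma \ref{lemma:H1_conserved} with estimate \eqref{eqn:integration_lemma} of Lemma \ref{lemma:integration} to obtain
\begin{equation*}
\norm{\xi^u}_{L^2(I)}\le C_p\bigl[\norm{\xi^u_y}_{L^2(I)}+h^{-\frac12}\norm{\jump{\xi^u}}_{L^2(\partial I)}\bigr]\le C_{\sigma,p}\bigl(\norm{\xi^\omega}_{L^2(I)}+\norm{\eta^\omega}_{L^2(I)}+h^{-1}\norm{\eta^u}_{L^2(I)}\bigr).
\end{equation*}
The crucial observation is that $h^{-1}\norm{\eta^u}_{L^2(I)}\le Ch^{k+1}$: this extra term is harmless for the Gronwall argument, yet it is exactly one power of $h$ worse than the optimal projection error, which is where the suboptimality for $u$ first enters. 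Substituting this bound absorbs $\norm{\xi^u}_{L^2(I)}^2$ into the $\norm{\xi^\omega}_{L^2(I)}^2$ term plus an $O(h^{2k+2})$ remainder, and the Gronwall inequality gives $\norm{\xi^\omega}_{L^2(I)}^2+\norm{\xi^\rho}_{L^2(I)}^2\le Ch^{2k+2}$. With the triangle inequality and \eqref{projection error} this yields $\norm{\omega-\omega_h}_{L^2(I)}+\norm{\rho-\rho_h}_{L^2(I)}\le Ch^{k+1}$.

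I expect the main obstacle to be the $L^\infty$ estimate for $u$, because one cannot afford to pass from $\norm{\xi^u}_{L^2(I)}$ to $\norm{\xi^u}_{L^\infty(I)}$ through the inverse inequality \eqref{eqn:inverse inequality}: that would cost a factor $h^{-1/2}$ and degrade the rate to $h^{k+\frac12}$. Instead I would exploit the integration structure directly. Since the scheme enforces $(u_h)_y=\omega_h$ cell by cell with $u_h$ continuous, and the boundary condition gives $u_h(y_R,s)=u(y_R,s)$, while the exact solution satisfies $u_y=\omega$, subtracting the two integrated identities produces the representation
\begin{equation*}
u(y,s)-u_h(y,s)=-\int_y^{y_R}\bigl(\omega-\omega_h\bigr)(\xi,s)\,d\xi .
\end{equation*}
Hence $\norm{u-u_h}_{L^\infty(I)}\le |I|^{\frac12}\norm{\omega-\omega_h}_{L^2(I)}\le Ch^{k+1}$, which finishes the proof and simultaneously explains why the $u$-error is capped at the rate of the $\omega$-error rather than attaining the optimal $h^{k+2}$ formally available in $V_h^{k+1}$.
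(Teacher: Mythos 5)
Your proposal is correct and follows essentially the same route as the paper: the energy argument of Theorem \ref{thm:H1_conserved} carried up to \eqref{error_2}, with Lemma \ref{lemma:integration} (plus Poincar\'{e}--Friedrichs) replacing Lemma \ref{lemma:H1_conserved} and the observation $h^{-1}\norm{\eta^u}_{L^2(I)}\leq Ch^{k+1}$ making the extra term harmless for Gronwall, followed by the integral representation of $u-u_h$ from \eqref{eqn:integration error_eqn3} with the vanishing boundary term and Cauchy--Schwarz to get the $L^\infty$ bound. In fact your write-up makes explicit two steps the paper only sketches (the combination of the two lemmas and the role of $\norm{\eta^u}\leq Ch^{k+2}$), but the substance is identical.
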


\begin{proof}
First, we write the error equations of the integration DG scheme \eqref{eqn: H1_integration_numerical_scheme} as follows:
\begin{subnumcases}{\label{eqn:integration error_eqn}}
((\rho-\rho_h)_s,\varphi)_{I_j} = -(u\omega - u_h\omega_h,\varphi)_{I_j},\label{eqn:integration error_eqn1}\\
((\omega-\omega_h)_s, \phi)_{I_j} =(\rho u - \rho_hu_h, \phi)_{I_j},\label{eqn:integration error_eqn2}\\
(u - u_h)(y) = (u - u_h)(y_{j+\frac{1}{2}}) - \int_y^{y_j+\frac{1}{2}} (\omega -\omega_h)(\zeta)\ d\zeta .\label{eqn:integration error_eqn3}
\end{subnumcases}
Then the process of this proof is similar to the proof of Theorem \ref{thm:H1_conserved}. We can adopt the proof of Theorem \ref{thm:H1_conserved} until estimate \eqref{error_2}. The relationship between $\xi^u$ and $\xi^w$ for the integration DG scheme which has been given in Lemma \ref{lemma:integration} guarantees that the proof can be continued. Under this circumstance, $u\in V^{k+1}_h$ and $\norm{\eta^u} < Ch^{k+2}$. So we can still have the same result as Theorem \ref{thm:H1_conserved},
\begin{align*}
\norm{\xi^\omega}_{L^2(I)}^2 + \norm{\xi^\rho}_{L^2(I)}^2 \leq  Ch^{2k+2}.
\end{align*}
Then followed by the triangle inequality and the interpolating property, there holds
\begin{align}
 \norm{\omega-\omega_h}_{L^2(I)}  + \norm{\rho - \rho_h}_{L^2(I)}  \leq Ch^{k+1}.
\end{align}
 Here, we can obtain the $L^\infty$ error estimate for $u_h$. For $\forall y \in [y_L, y_R]$, we apply Cauchy-Schwarz inequality on error equation \eqref{eqn:integration error_eqn3}, and  the following estimate holds
\begin{equation}\label{eqn:error_u}
\abs{(u - u_h)(y)} =  \abs{(u - u_h)(y_R) + \int_y^{y_R} (\omega-\omega_h)(\xi)d\xi}
                   \leq C\norm{\omega-\omega_h}_{L^2(I)}.
\end{equation}
The boundary term vanishes in \eqref{eqn:error_u} due to the boundary condition $u(y_R,s) = u_h(y_{N+\frac{1}{2}},s)$. Thus we arrive at $\norm{u-u_h}_{L^\infty} \leq Ch^{k+1}$.
\end{proof}

\subsection{The sine-Gordon equation}\label{SG and SP}
  There exists some short pulse type equations which are failed to be transformed into the corresponding CD systems to solve, e.g. some examples in Section \ref{Extension}. In this section, we can adopt an alternative approach by linking the short pulse equation \eqref{eqn:short pulse} with the sine-Gordon equation.

  First, we consider the sine-Gordon equation
\begin{equation}\label{eqn:sine-Gordon}
z_{ys} = \sin{z}, \ z\in\mathbb{R},
\end{equation}
where $s$ denotes the time coordinate, $y$ is the spatial scale and $y \in I = [y_L,y_R]$. The hodograph transformation between the short pulse equation and the sine-Gordon equation is as follows:
\begin{equation}\label{eqn:hodograph_SG_SPE}
\begin{cases}
\frac{\partial}{\partial y} = \cos z \frac{\partial}{\partial x}, \\
\frac{\partial}{\partial s} = \frac{\partial}{\partial t} - \frac{(z_s)^2}{2}\frac{\partial}{\partial x}.
\end{cases}
\end{equation}
 The parametric representation of the solution of the short pulse equation \eqref{eqn:short pulse} is
\begin{align}
u = z_s(y,s), \ x = x(y_0,s) + \int_{y_0}^y \cos z\ dy.
\end{align}
The sine-Gordon equation has a conserved quantity $H_2 = \int_I z_y^2 \ dy$ which can be preserved discretely in the following DG scheme.

\subsubsection{DG scheme for sine-Gordon equation }\label{subsection:SGSP}

In this subsection, we develop the DG scheme for  the sine-Gordon equation \eqref{eqn:sine-Gordon}. We divide the sine-Gordon equation into these first-order equations
\begin{equation}
\begin{cases}
&\omega_s = \eta,\\
&\eta = \sin{z}, \\
&\omega = z_y.
\end{cases}
\end{equation}
The semi-discrete DG numerical method for sine-Gordon equation is defined as follows: Find $z_h, \eta_h, \omega_h \in V_h^k$, such that, for all test functions $\varphi$, $\phi$, $\psi \in V_h^k$ and $I_j \in \mathcal{T}_h$
\begin{subnumcases}{\label{eqn:numerical_SG}}
((\omega_h)_{s},\varphi)_{I_j} = (\eta_h,\varphi)_{I_j}, \label{eqn:numerical_SG_1}\\
(\eta_h,\phi)_{I_j} = (\sin{z_h},\phi)_{I_j}, \label{eqn:numerical_SG_2}\\
(\omega_h,\psi)_{I_j} = <\widehat{z_h}, \psi>_{I_j} - (z_h,\psi_y)_{I_j},\label{eqn:numerical_SG_3}
\end{subnumcases}
where the numerical flux is $\widehat{z_h} = z_h^+$. Numerically, this scheme can achieve the optimal order of accuracy for $z_h$.

To maintain the quantity $H_2$ conserved, we can also choose a special numerical flux $\widehat{z_h}$
\begin{equation}\label{eqn: SG_flux}
\begin{split}
&\widehat{z_h} =  \begin{cases} \frac{\jump{z_h\eta_h}- \jump{\cos{z_h}}}{\jump{\eta_h}}, \; &\jump{\eta_h} \neq 0, \\ z_h. \; &\jump{\eta_h} = 0, \end{cases}
\end{split}
\end{equation}
which makes the scheme $H_2$ conservative. But  in the actual computation, this conservative scheme will increase the complexity for solving a nonlinear system. For the convenience of the simulation, we adopt the numerical flux $\widehat{z_h} = z_h^+$ for the DG scheme \eqref{eqn:numerical_SG} in our numerical tests.

\subsubsection{Integration DG scheme for sine-Gordon equation }\label{subsection:integration SG}
We can also solve $\omega = z_y$  by integration directly as the scheme \eqref{eqn: H1_integration_numerical_scheme} in
Section \ref{subsubsec:integration}. The semi-discrete integration DG numerical scheme for sine-Gordon equation \eqref{eqn:sine-Gordon} is formulated as: Find $z_h \in V_h^{k+1}$, $\eta_h, \omega_h \in V_h^k$, such that, for all test functions $\varphi$, $\phi$ $\in V_h^k$ and $I_j \in \mathcal{T}_h$,
\begin{subnumcases}{\label{eqn:numerical_SG_integration}}
((\omega_h)_{s},\varphi)_{I_j} = (\eta_h,\varphi)_{I_j}, \label{eqn:numerical_SG_integration1}\\
(\eta_h,\phi)_{I_j} = (\sin{z_h},\phi)_{I_j}, \label{eqn:numerical_SG_integration2}\\
z_h(y,s)\mid_{I_j}  = z_h(y_{j+\frac{1}{2}},s) - \int_y^{y_{j+\frac{1}{2}}}\omega_h(\xi,s) \ d\xi,\label{eqn:numerical_SG_integration2}
\end{subnumcases}
with boundary condition $z_h(y_{N+\frac{1}{2}},s) = z(y_R,s)$. Similarly, this scheme can achieve the optimal $(k+2)$-$th$ order of accuracy for $z_h$ numerically.

Since $ u = z_s $, when we obtain $z_h$ in each time level, $u^n_h$ can be computed by a fourth order approximation
\begin{equation}\label{ut}
u^n_h = \frac{2z_h^{n+1}+3z_h^n -6z_h^{n-1}+z_h^{n-2}}{6\Delta s}.
\end{equation}
Here, we just give one of approximation methods to get $u_h$ by $z_h$ as an example, and this discretization method is not unique.

\section{Extensions to other cases}\label{Extension}
In this section, we  consider some generalized short pulse type equations. Similar numerical schemes including $H_0$, $H_1$ conserved DG schemes and integration DG scheme in Section \ref{subsec:SP} for the corresponding CD systems, DG scheme and integration DG scheme in Section \ref{SG and SP} for sine-Gordon type equations can be also adopted to solve these generalized short pulse type equations via hodograph transformations. For simplicity, we just introduce these generalized equations and the conservative quantities considered in the numerical schemes.

\subsection{The coupled short pulse equation}\label{subsec:CSP}
The short pulse equation can be generalized to the coupled short pulse equations
\begin{equation}\label{eqn:CSP1}
\left.
\begin{array}{r}
 u_{xt} = u + \frac{1}{2}(uvu_x)_x , \\
  \ v_{xt} = v + \frac{1}{2}(uvv_x)_x,
\end{array}
\right\} \ u, v\in\mathbb{R},
\end{equation}
 which can be converted into a coupled CD system
\begin{equation}\label{eqn:CCD1}
\rho_s + \frac{1}{2}(uv)_y = 0, \ u_{ys} = \rho u, \ v_{ys} = \rho v,
\end{equation} proposed by Konno and Kakuhata in \cite{Kakuhata_1996_JPSJ}. %Then we give the corresponding
For the coupled short pulse equation \eqref{eqn:CSP1}, we set $u, v \in \mathbb{C}$ and $v = u^*$ which denotes the complex conjugate of $u$. Then we have complex short pulse equation derived from \cite{Feng_2015_PDNP}
\begin{equation}\label{eqn: complex_SPE1}
\begin{split}
 &u_{xt} = u + \frac{1}{2}(\abs{u}^2u_x)_x, \ u\in \mathbb{C},
\end{split}
\end{equation}
which is related to the complex CD system
\begin{equation}\label{eqn:ComplexCD1}
\rho_s + \frac{1}{2}\abs{u}^2_y = 0, \ u_{ys} = \rho u, \ u^*_{ys} = \rho u^*.
\end{equation}
And according to the real-valued case, we also give the complex form of the coupled short pulse equation,
\begin{equation}\label{eqn:BCCSP}
\left.
\begin{array}{r}
  u_{xt} = u + \frac{1}{2}((\abs{u}^2 + \abs{v}^2)u_x)_x, \\
  v_{xt} = v + \frac{1}{2}((\abs{v}^2 + \abs{u}^2)v_x)_x,
\end{array}
\right\} \ u, v \in\mathbb{C}.
\end{equation}
Through the corresponding hodograph transformation, it can be transformed into
\begin{equation}\label{eqn:BCCD}
 \rho_s + \frac{1}{2}(\abs{u}^2)_y + \frac{1}{2}(\abs{v}^2)_y = 0, \
 u_{sy} = \rho u, \
 v_{sy} = \rho v.
\end{equation}

Similar to the short pulse equation \eqref{eqn:short pulse}, the corresponding $H_0$, $H_1$ conserved DG schemes and integration DG scheme can be constructed to solve above three short pulse type equations \eqref{eqn:CSP1}, \eqref{eqn: complex_SPE1}, \eqref{eqn:BCCSP}, respectively. Here, we list the conserved quantities $H_0$, $H_1$ in Table \ref{tab:conservedtab}.
\begin{table}[H]
\begin{tabular}{|c|c|c|c|}
\hline
           & Coupled CD \eqref{eqn:CCD1}  & Complex CD \eqref{eqn:ComplexCD1} & Coupled CD in complex form \eqref{eqn:BCCD} \\\hline
     $H_0$ &$\int\rho uv dy$ & $\int\rho \abs{u}^2 dy $ & $ \int \rho (\abs{u}^2 + \abs{v}^2) \ dy$ \\
     $H_1$ &$\int\rho^2 + u_y v_y dy$ &$\int\rho^2 + \abs{u_y}^2 dy$ & $\int \rho^2 + \abs{u_y}^2 + \abs{v_y}^2 dy$\\\hline
\end{tabular}
\caption{\label{tab:conservedtab} The conserved quantities $H_0$, $H_1$ for three generalized CD systems.}
\end{table}

 \subsection{The modified short pulse equation}
For nonlinear wave equation
\begin{equation}
u_{xt} = u + au^2u_{xx} + buu_x^2,
\end{equation}
if its coefficient ratio $a/b$ equals $1$ instead of $\frac{1}{2}$, then we obtain the modified short pulse equation after rescaling the variable $u$,
\begin{equation}\label{eqn:MSP}
\ u_{xt} = u + \frac{1}{2}u(u^2)_{xx}, \ u\in\mathbb{R}.
\end{equation}
It can be converted to a modified CD system  \cite{Matsuno_2016_JMP}
\begin{equation}\label{eqn:MCD}
\rho_s + (u^2)_y = 0, \ u_{ys} = (2\rho-1)u,
\end{equation}
for which we can build $H_0$, $H_1$ conserved and integration DG schemes.

Next, some generalized modified short pulse equation will be introduced.  It is worth to mention that there is one type generalized modified short pulse equation in Section \ref{secsec:link SG},  which can not  be transformed into the CD system but only the sine-Gordon equations through the hodograph transformation.

 \subsubsection{The generalized modified short pulse systems}\label{secsec:link CD}
The modified short pulse equation %can be extended to the coupled modified  short pulse system
\begin{equation}\label{eqn:CMSP}
\left.
\begin{array}{r}
  u_{xt}  = u + \frac{1}{2}v(u^2)_{xx}, \\
  v_{xt} = v + \frac{1}{2}u(v^2)_{xx},
\end{array}
\right\}\ u,v \in\mathbb{R},
\end{equation}
 connects with the coupled modified CD system
\begin{equation}\label{eqn:ComplexMCD}
\rho_s + (uv)_y = 0, \ u_{ys} = (2\rho-1)u, \ v_{ys} = (2\rho-1)v.
\end{equation}
When $u, v \in \mathbb{C}$ and $v = u^*$  in \eqref{eqn:CMSP},
%the modified coupled short pulse equation in complex form can be obtained as
%\begin{equation}
%u_{xt} = u + \frac{1}{2}u^*(u^2)_{xx}, \ u\in\mathbb{C} \; \textsf{[The complex modified short pulse equation]}.
%\end{equation}
 the defocusing type of complex modified short pulse equation mentioned in \cite{Shen_2017_JNMP} is
\begin{equation}\label{eqn:MCSP_defocusing}
u_{xt} = u - \frac{1}{2}u^*(u^2)_{xx}, \ u\in\mathbb{C},
\end{equation}
and the corresponding CD system is
\begin{equation}\label{eqn:Complex_defocusing_MCD}
\rho_s - \abs{u}^2_y = 0, \ u_{ys} = (2\rho-1)u, \ u^*_{ys} = (2\rho-1)u^* .
\end{equation}
Similarly, we can develop the corresponding $H_0$, $H_1$ conserved DG schemes and integration DG scheme to solve the modified short pulse type equations \eqref{eqn:MSP},
\eqref{eqn:CMSP}, \eqref{eqn:MCSP_defocusing} via the hodograph transformations. The conserved quantities $H_0$, $H_1$ are contained in Table \ref{tab:conservedtab2}.
\begin{table}[H]
\begin{tabular}{|c|c|c|c|}
\hline
           & MCD \eqref{eqn:MCD}  & Complex MCD \eqref{eqn:ComplexMCD} & Defocusing complex MCD \eqref{eqn:Complex_defocusing_MCD} \\\hline
     $H_0$ &$\int (2\rho - 1) u^2 dy$ & $\int(2\rho-1)uv\ dy $ & $ \int (2\rho - 1)\abs{u}^2 dy$ \\
     $H_1$ &$\int \rho^2 + u^2_y \ dy$ &$\int \rho^2 + u_yv_y dy$ & $\int \rho^2 + \abs{u_y}^2 dy$\\\hline
\end{tabular}
\caption{\label{tab:conservedtab2} The conserved quantities $H_0$, $H_1$ for three generalized MCD systems.}
\end{table}

\subsubsection{Sine-Gordon type equations}\label{secsec:link SG}
 The modified short pulse equation \eqref{eqn:MSP} is linked with the sine-Gordon equation
 \begin{equation}
 z_{ys} = \sin{z}\cos{z}
 \end{equation}
which is equivalent to $z_{ys} =\frac{1}{2} \sin 2z $.
Similarly, we can develop DG schemes for the sine-Gordon equation to solve the modified short pulse equation as in Section \ref{SG and SP}.

We consider another integrable generalization of the modified short pulse equation, so called the novel coupled short pulse equation proposed by Feng in \cite{Feng_2012_JPMT}
\begin{equation}\label{New coupled SP_1}
\left.
\begin{array}{r}
   u_{xt} = u + \frac{1}{6}(u^3)_{xx} + \frac{1}{2}v^2u_{xx},\\
 v_{xt} = v + \frac{1}{6}(v^3)_{xx} + \frac{1}{2}u^2v_{xx},
\end{array}
\right\} \ u, v\in\mathbb{R},
\end{equation}
 which is failed to be transformed to the CD system. But  it can be transformed to the following two-component sine-Gordon system
\begin{equation}\label{eqn:Coupled_sine-Gordon_1}
z_{ys} = \sin{z},\ \tilde{z}_{ys} = \sin \tilde{z},
\end{equation}
 by the hodograph transformation
\begin{equation}\label{eqn:hodograph_SG_SPE}
\begin{cases}
&\frac{\partial}{\partial y} = \frac{1}{2}(\cos z + \cos \tilde{z})\frac{\partial}{\partial x}, \\
&\frac{\partial}{\partial s} = \frac{\partial}{\partial t} - \frac{u^2 + v^2}{2}\frac{\partial}{\partial x}.
\end{cases}
\end{equation}
Hence, we can develop the DG and integration DG schemes for  this coupled system \eqref{eqn:Coupled_sine-Gordon_1}  as we did for the sine-Gordon equation in  Section \ref{SG and SP}.

\section{Numerical experiments}\label{Numeical experiment}
In this section we will provide several numerical experiments to illustrate the accuracy and capability of the DG methods. Time discretization is the fourth order explicit Runge-Kutta method in \cite{Gottlieb_2001_SIAM}. This time discretization method may not ensure the conservativeness of fully discretization schemes. However, we will not address the issue of time discretization conservativeness in this paper.  In order not to repeat, we always choose one of $H_0$, $H_1$ conserved and integration DG schemes to profile the numerical solution in the subsequent numerical experiments.

\begin{example}
In this example, smooth solutions of traveling waves with periodic boundary condition for the short pulse equation \eqref{eqn:short pulse} are used to test the $E_0$ conserved DG method in Section \ref{sec:E0_SP}. We list one of the exact solutions derived in \cite{Matsuno_2008_JMP}, which is given by the exact solution of  the sine-Gordon equation \eqref{eqn:sine-Gordon} in the coordinate $(y,s)$ and need to be transformed to the coordinate  $(x,t)$ in the following test,
\begin{equation}\label{sol:periodic}
\begin{split}
&u(y,s) = \frac{2\kappa}{a}cn(\eta,\kappa), \\
&x(y,s) = x_0 + \frac{1}{a^2}(1-2\kappa^2)s +  \frac{1}{a}(-\eta + 2E(\eta,\kappa)) + d,\\
&\eta = ay - \frac{s}{a} + \eta_0,
\end{split}
\end{equation}
where $cn(\eta,\kappa)$ is Jacobi's $cn$ function.
The period is computed by $T_p =\frac{4}{a}\abs{-K(\kappa)+2E(\kappa)} $,   where $K(\kappa)$ and $E(\kappa)$ are the complete elliptic integrals of the first and second kinds, respectively \cite{Abramowitz_1972_Dover}.

The $L^2$ and $L^{\infty}$ errors and the convergence rates for $u_h$ are contained in Table \ref{tab:SPDG_E0_conserved}. For even $k$, we see the optimal order of accuracy, however, for odd $k$, we can only have suboptimal order of accuracy.  Figure \ref{fig:period} illustrates a typical periodic solution at $T = 1$ with the parameters $\kappa = 0.65, a = 1.3,x_0 = \eta_0 = d = 0$. This solution represents a periodic wavetrain traveling to the right with a constant velocity $V= 0.0917$.
\begin{table}[!htp]
\begin{center}
\begin{tabular}{|c|cccc|cccc|}
  \hline
  % after \\: \hline or \cline{col1-col2} \cline{col3-col4} ...
                     &  \multicolumn{4}{c|}{$P^1$} & \multicolumn{4}{c|}{$P^2$}\\\hline
             N       &$ \norm{u-u_h}_{L^2}$    & order   &$ \norm{u-u_h}_{L^\infty}$   & order
                     &$ \norm{u-u_h}_{L^2}$    & order   &$ \norm{u-u_h}_{L^\infty}$   & order\\\hline

              40  &2.05E-02 &-- &9.17E-02 &--
                   &2.10E-05 &-- &1.12E-04 &--\\
              80  &1.09E-02 &0.91 &5.18E-02 &0.82
                   &2.30E-06 &3.19 &1.22E-05 &3.19\\
             160  &5.49E-03 &0.99 &2.90E-02 &0.84
                   &2.41E-07 &3.26 &1.40E-06 &3.13\\
             320  &2.76E-03 &0.99 &1.54E-02 &0.91
                   &3.19E-08 &2.92 &1.90E-07 &2.88\\\hline
\end{tabular}
\end{center}
\caption{\label{tab:SPDG_E0_conserved} Periodic solution \eqref{sol:periodic} of the short pulse equation \eqref{eqn:short pulse}: $E_0$ conserved DG scheme with computational domain $[0,T_p]$, at time $ T = 1$. The parameters $\kappa = 0.65, a = 1.3,x_0 = \eta_0 = d = 0$.} %$ ($ p^3 $, CFL = 0.05)} %SDC3
\end{table}
\begin{figure}[!htp]
\begin{center}
\begin{tabular}{cc}
\includegraphics[width=0.45\textwidth]{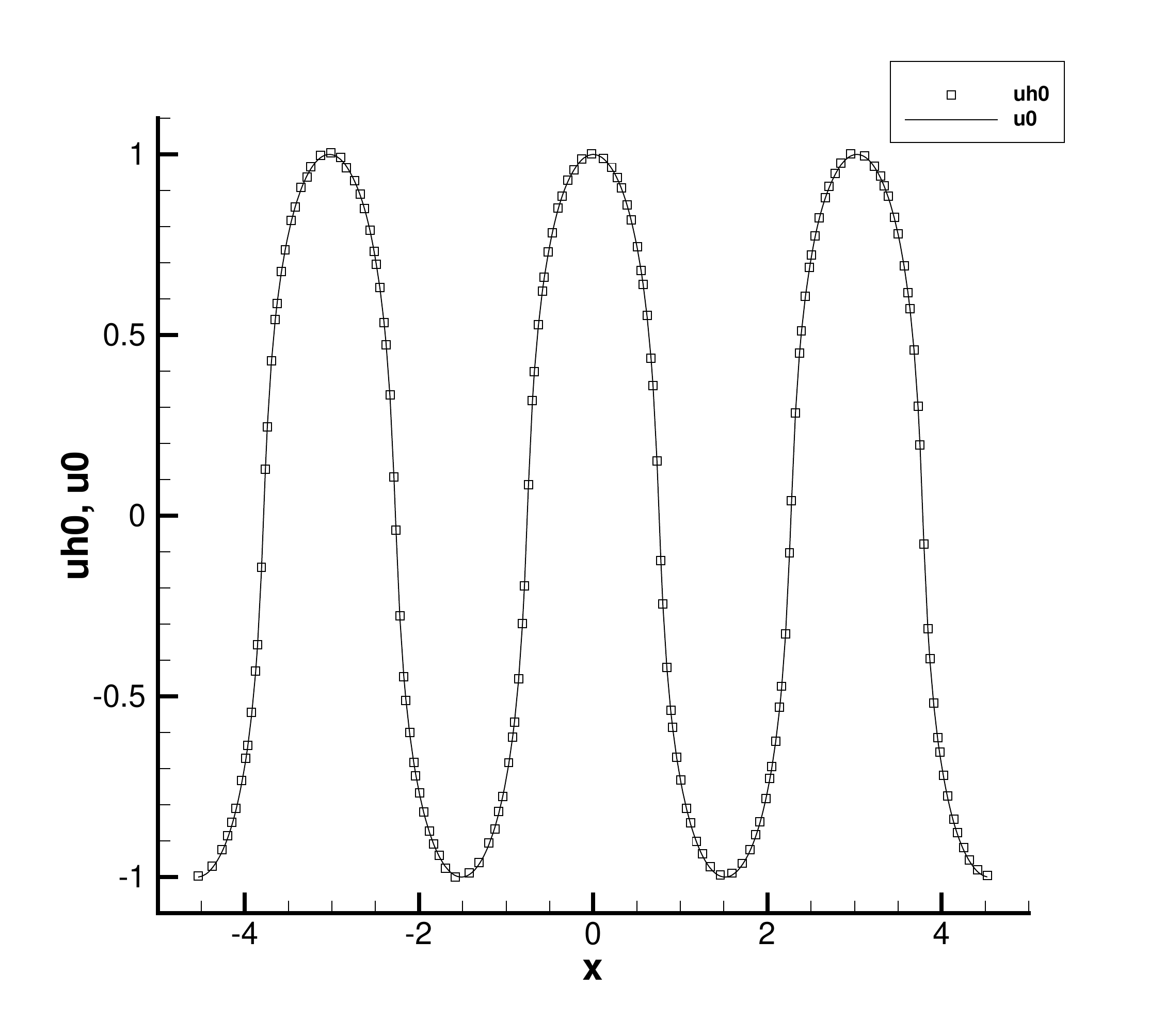}&\includegraphics[width=0.45\textwidth]{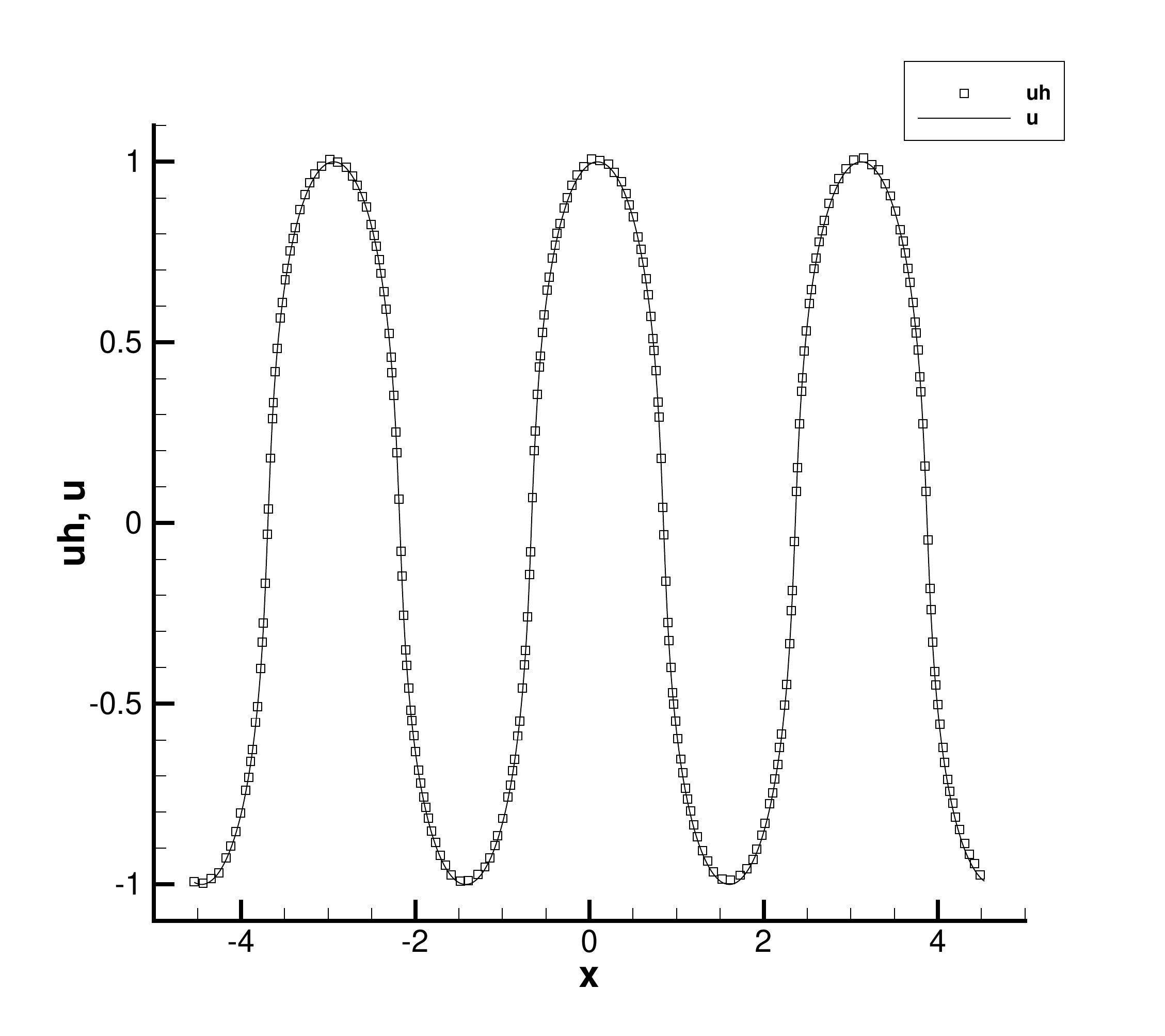}\\
(a) t = 0& (b) t = 1
\end{tabular}
\end{center}
\caption{\label{fig:period} Periodic solution \eqref{sol:periodic} of the short pulse equation \eqref{eqn:short pulse}: $E_0$ conserved DG scheme with computational domain $[-1.5T_p,1.5T_p]$ and $N = 160$ cells, $P^2$ elements at time $T = 0, 1$. The parameters $\kappa = 0.65, a = 1.3,x_0 = \eta_0 = d = 0$. }
\end{figure}
\end{example}

\begin{example}\label{ex1} This example is devoted to solve some loop-solition solutions of the short pulse equation \eqref{eqn:short pulse} by linking with CD system. We list the general determinant form of solution:
\begin{equation}\label{soliton_solution}
\begin{cases}
&u = \frac{g}{f}, \\
&x = x(y_0,s) + \int_{y_0}^y \rho(\zeta,s) d\zeta = y - 2(\ln f)_s, \ t = s,
\end{cases}
\end{equation}

\[\ f =
\left|\begin{array}{cc}
    A_I&    I   \\
    -I &    B_I  \\
\end{array}\right|,
\ g =
 \left|\begin{array}{ccc}
       A_I&       I &                \mathbf{e}^T \\
       -I &     B_I &                \mathbf{0}^T \\
\mathbf{0}&    -\mathbf{\alpha}^T &             0 \\
\end{array}\right|
\]
where $A_I, B_I, I \in \mathbb{R}^{m\times m}$, $m$ is integer denoting the number of soliton, and $I$ is an identity matrix, $\mathbf{0}, \mathbf{e}, \mathbf{\alpha} \in \mathbb{R}^{m}$ are $m$-component row vectors whose elements are defined, respectively, by
\begin{equation}\label{matrix_element}
\begin{split}
&a_{ij} = \frac{1}{2(\frac{1}{p_i}+ \frac{1}{p_j})}e^{\xi_i + \xi_j}, \ b_{ij} = \frac{\alpha_i\alpha_j}{2(\frac{1}{p_i}+ \frac{1}{p_j})}, \\
&\mathbf{e} = (e^{\xi_1}, e^{\xi_2}, \ldots, e^{\xi_m}),\ \mathbf{\alpha} = (\alpha_1,\alpha_2,\ldots,\alpha_m),
\text{with} \ \xi_i = p_iy + \frac{s}{p_i} + y_{i0}, \ i = 1, 2, \ldots, m.
\end{split}
\end{equation}
We fix the constant of integration in $x$ by choosing $ y_0 =0 $, and $p_i$, $\alpha_i $, $y_{i0}$ $\in \mathbb{C}$ are constants.

\newsavebox{\tablebox}
  \begin{lrbox}{\tablebox}
  \begin{tabular}{|c|c|cccc|cccc|}
  \hline
  % after \\: \hline or \cline{col1-col2} \cline{col3-col4} ...
           & N       &$ \norm{u-u_h}_{L^2}$    & order   &$ \norm{u-u_h}_{L^\infty}$   & order
                     &$ \norm{\rho-\rho_h}_{L^2}$    & order   &$ \norm{\rho-\rho_h}_{L^\infty}$   & order\\\hline

                    &  40  &1.17E-02 & -- &8.00E-02 &--
                     &1.84E-02 &-- &2.30E-01 & --\\

  $H_0$                  &  80  &3.97E-04 &4.88 &2.37E-03 &5.08
                     &6.68E-04 &4.78 &1.07E-02 &4.43 \\

  dissipative                  & 160  &4.92E-05 &3.01 &2.78E-04 &3.09
                     &7.75E-05 &3.11 &1.17E-03 &3.19 \\

  DG scheme                  & 320  &6.85E-06 &2.85 &3.85E-05 &2.85
                     &1.03E-05 &2.91 &1.50E-04 &2.97 \\\hline

                    &  40  &7.92E-03 &-- &3.79E-02 &--
                     &6.59E-04 &-- &7.83E-03 &-- \\

 $H_0$                  &  80  &8.46E-05 &6.55 &6.17E-04 &5.94
                     &5.70E-05 &3.53 &9.23E-04 &3.08 \\

 conserved                   & 160  &1.04E-05 &3.03 &7.61E-05 &3.02
                     &7.45E-06 &2.93 &1.25E-04 &2.89 \\

 DG scheme                   & 320  &1.32E-06 &2.98 &9.60E-06 &2.99
                     &9.42E-07 &2.98 &1.56E-05 &3.00 \\\hline

                    &  40  &1.03E-02 &-- &4.67E-02 &--
                     &3.27E-04 &-- &3.97E-03 &-- \\

  $H_1$                  &  80  &9.88E-05 &6.70 &6.97E-04 &6.07
                     &3.41E-05 &3.26 &5.81E-04 &2.77 \\

  conserved                  & 160  &1.16E-05 &3.09 &8.26E-05 &3.08
                     &4.23E-06 &3.01 &7.94E-05 &2.87 \\

  DG scheme                  & 320  &1.43E-06 &3.02 &1.02E-05 &3.01
                     &5.29E-07 &3.00 &1.01E-05 &2.97 \\\hline

                     &  40  &1.77E-05 &-- &1.94E-04 &--
                     &2.32E-04 &-- &2.80E-03 &-- \\

   Integration                 &  80  &9.16E-07 &4.27 &1.75E-05 &3.48
                     &2.92E-05 &2.99 &4.59E-04 &2.61 \\

    DG scheme                & 160  &5.41E-08 &4.08 &1.17E-06 &3.90
                     &3.70E-06 &2.98 &6.07E-05 &2.92 \\

                    & 320  &3.33E-09 &4.02 &7.45E-08 &3.98
                     &4.65E-07 &2.99 &7.72E-06 &2.98 \\\hline

\end{tabular}
\end{lrbox}

 \begin{table}[!ht]
 \captionsetup{font={scriptsize}}
 \caption{\label{tab:SP_H1_conserved} 1-soliton solution of the CD system \eqref{eqn:CD system}: The computational domain $[-10,10]$, $P^2 $ elements, at time $T = 10$. The parameters $p_1 = 1.0, \alpha_1 = 4.0$.} %$ ($ p^3 $, CFL = 0.05)} %SDC3
 \centering
   \begin{threeparttable}
\scalebox{0.9}{\usebox{\tablebox}}
    \end{threeparttable}
\end{table}

The $ L^2, L^\infty $ error order of 1-soliton solution for CD system are calculated numerically and reported in Table \ref{tab:SP_H1_conserved}. We compare four kinds of DG schemes at time $T = 10$ with uniform meshes in $[-10,10]$. The $H_0$ dissipative DG numerical scheme with $\alpha = 0.1, \beta = 0, \mu = 0.5$ in \eqref{flux:SP} can reach the optimal $(k+1)$-$th$ order of accuracy for $u_h$ and $\rho_h$, so can $H_1$ conserved DG scheme. And the integration DG scheme has the optimal $(k+2)$-$th$ order of accuracy for $u_h$. It is notable that the $H_0$ conserved DG scheme can only reach the optimal error order when $k$ is even, but the suboptimal $k$-$th$ order of accuracy when $k$ is odd.
For the same order of accuracy, we notice that $H_0$ dissipative DG scheme is less accurate than other three schemes. For variable $\rho_h$, $H_1$ conserved DG scheme is more accurate than $H_0$ conserved DG scheme, and there is little difference for variable $u_h$. In these four schemes, the integration DG scheme is most accurate one.

\begin{table}[H]
\begin{tabular}{c|cc|cc|cc}
\hline

             &  \multicolumn{2}{c|}{$ H_0 $ conserved DG scheme} & \multicolumn{2}{c|}{$ H_1 $ conserved DG scheme}& \multicolumn{2}{c}{Integration DG scheme}   \\\hline
      $P^k$  &$\Delta H_0$ &$\Delta H_1$&  $\Delta H_0$& $\Delta H_1$  & $\Delta H_0$ &$\Delta H_1$\\\hline
      $P^2$  &1.96E-06 &2.96E-06&1.94E-06&1.23E-07&4.38E-07&1.24E-07 \\
      $P^3$  &1.51E-07 &1.16E-08&4.92E-08&2.18E-08&7.61E-08&5.88E-08 \\\hline
    % $P^k$           &&   $\Delta H_0$  & $\Delta H_1$ &  $\Delta H_0$  & $\Delta H_1$ \\\hline
%     $P^1$        &&   4.51E-02	     &     2.10E-02    &  1.28E-004 &1.24E-007       \\
%     $P^2$        &&   4.23E-06	     &     2.98E-06    &  4.38E-007 & 1.24E-007           \\\hline

\end{tabular}
\caption{\label{tab:energy-change} The time evolution of conserved quantities for 1-soliton solution for the CD system \eqref{eqn:CD system}, with computational domain $[-20,20]$ and $N = 160$ cells at time $ T = 10.$}
\end{table}

The changes of quantity $H_0$ and $H_1$  with time are contained in Table \ref{tab:energy-change}. The quantities $\Delta H_0$, $\Delta H_1$ are defined as
\begin{equation}\label{ex1_delta_H}
\begin{split}
&\Delta H_0 =\sum\limits_{j=1}^{N+1} \abs{ \Big( \int_{I_j} \rho_h u_h^2\Big{|}_{t=T} \ dy -\int_{I_j} \rho_0u_0^2 \ dy\Big)}, \\
&\Delta H_1 = \sum\limits_{j=1}^{N+1}\abs{ \Big( \int_{I_j} \rho_h^2 + \omega_h^2\Big{|}_{t=T} \ dy -\int_{I_j} \rho_0^2 + \omega_0^2 \ dy\Big)}.
\end{split}
\end{equation}
where $\rho_0, u_0, \omega_0 $ are the initial conditions. Even though the fully discrete schemes may not be conservative, the two conserved quantities change slightly. To reduce the fluctuation of $H_0$ and $H_1$, we can increase the accuracy of temporal and spatial discretizations.

Next we show the capability of these DG schemes to simulate the singular soliton solutions. For the 2-soliton solution of the corresponding CD system, according to the choice of parameters, the solutions of the short pulse equation \eqref{eqn:short pulse} can be divide into three classes: two-loops-soliton solution in Figure \ref{fig:2loop_CD}, loop-antiloop-soliton solution in Figure \ref{fig:antiloop_CD}, two smooth-soliton solution (so called breather solution) in Figure \ref{fig:Breather_CD}. The DG schemes conduct accurate approximations for these soliton solutions.

\begin{figure}[!htp]
%\begin{tabular}{cc}
\begin{minipage}{0.49\linewidth}
  \centerline{\includegraphics[width=1.1\textwidth]{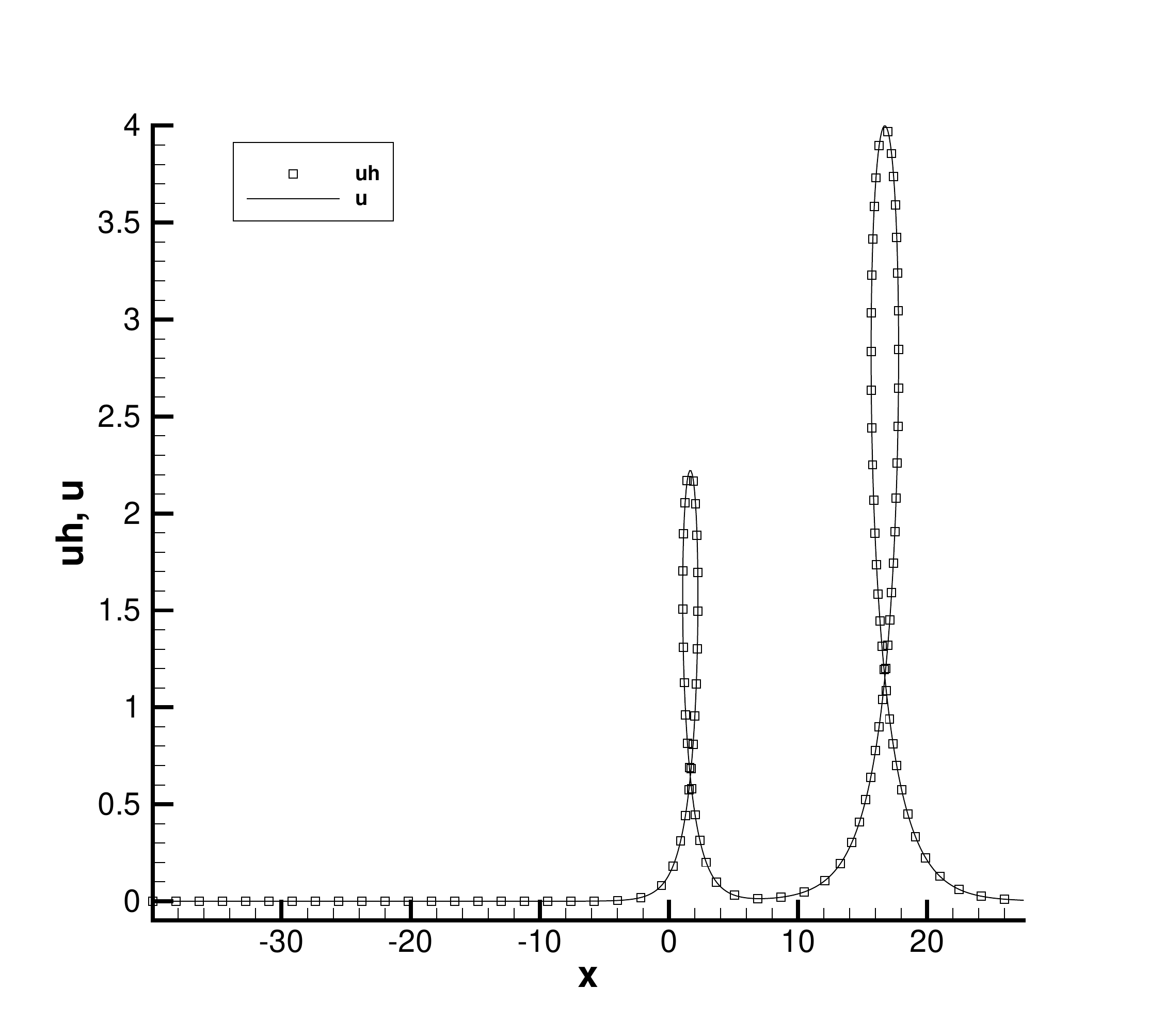}}
  \centerline{(a) t = 0.0}
\end{minipage}
\hfill
\begin{minipage}{0.49\linewidth}
  \centerline{\includegraphics[width=1.1\textwidth]{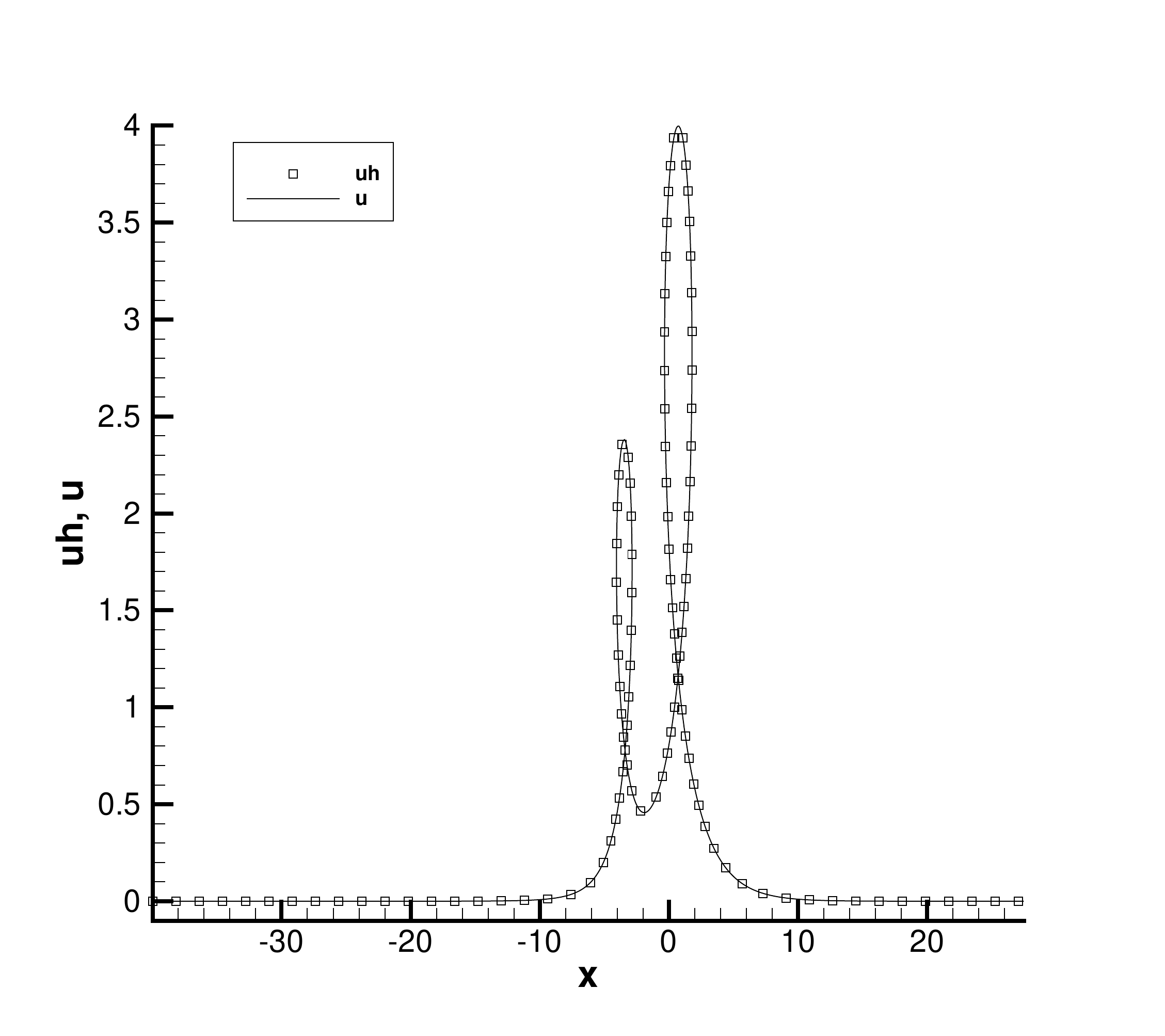}}
  \centerline{(b) t = 4.0}
\end{minipage}
\vfill
\begin{minipage}{0.49\linewidth}
  \centerline{\includegraphics[width=1.1\textwidth]{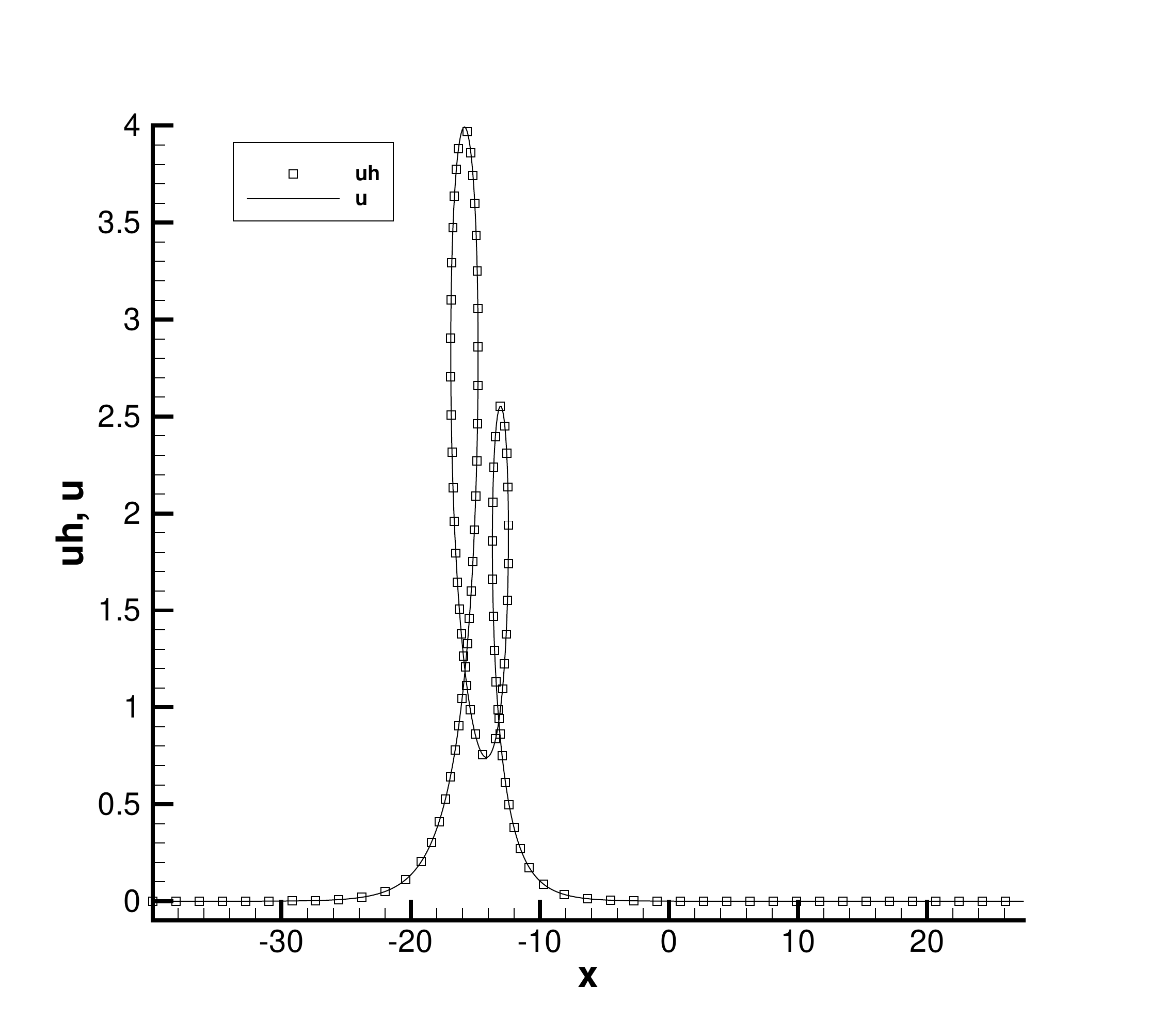}}
  \centerline{(c) t = 8.0}
\end{minipage}
\hfill
\begin{minipage}{0.49\linewidth}
  \centerline{\includegraphics[width=1.1\textwidth]{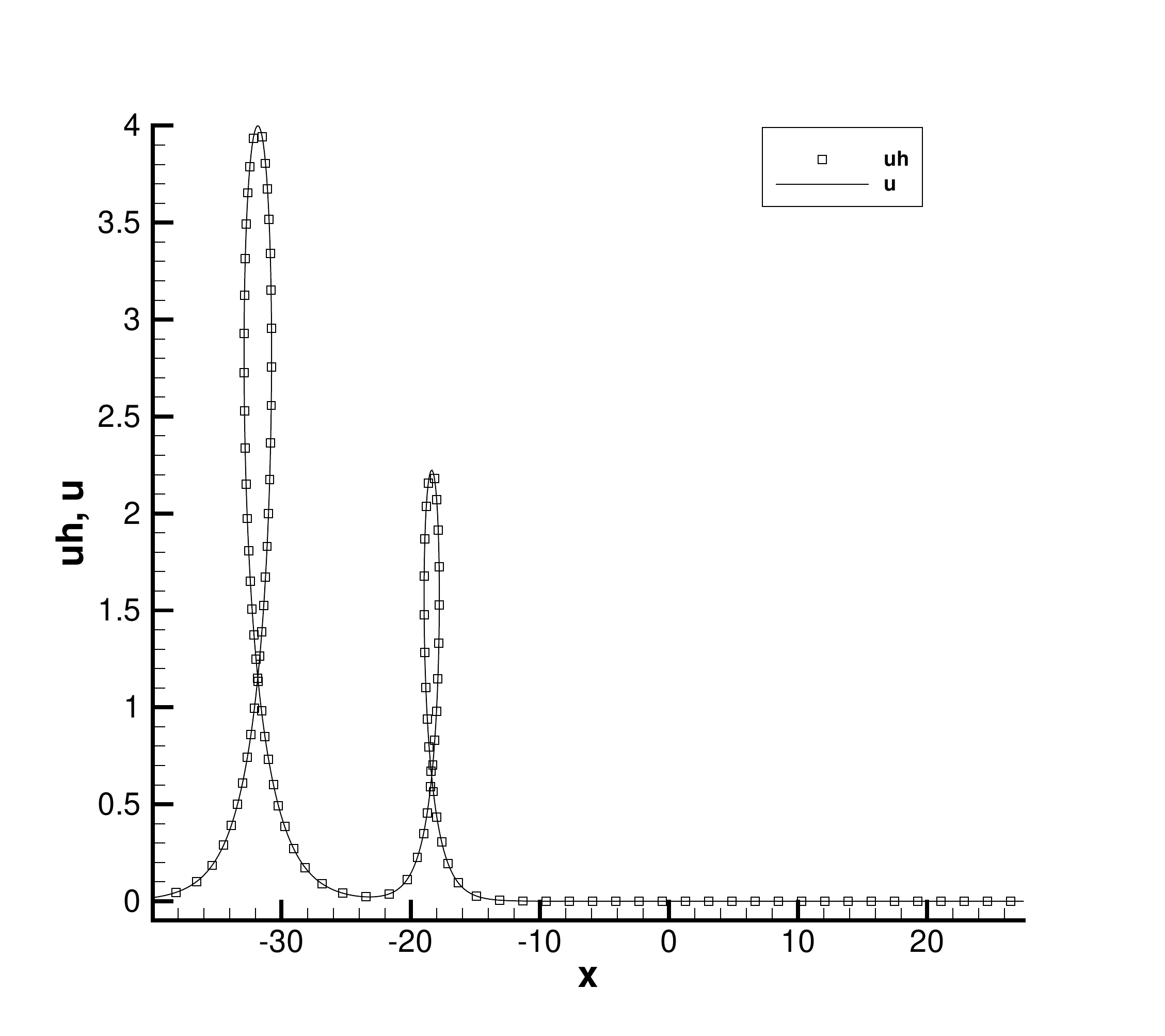}}
  \centerline{(d) t =12.0}
\end{minipage}
%\end{tabular}
\caption{Two-loops-soliton solution of the short pulse equation \eqref{eqn:short pulse}: $H_0$ conserved DG scheme with $N = 160$ cells, $P^2$ elements. The parameters $\alpha_1$ = $ e^{-2} $, $\alpha_2$ = $ e^{-8} $, $p_1 = 0.9$, $p_2 = 0.5$. }
\label{fig:2loop_CD}
\end{figure}

\begin{figure}[!htp]
%\begin{tabular}{cc}
\begin{minipage}{0.49\linewidth}
  \centerline{\includegraphics[width=1.1\textwidth]{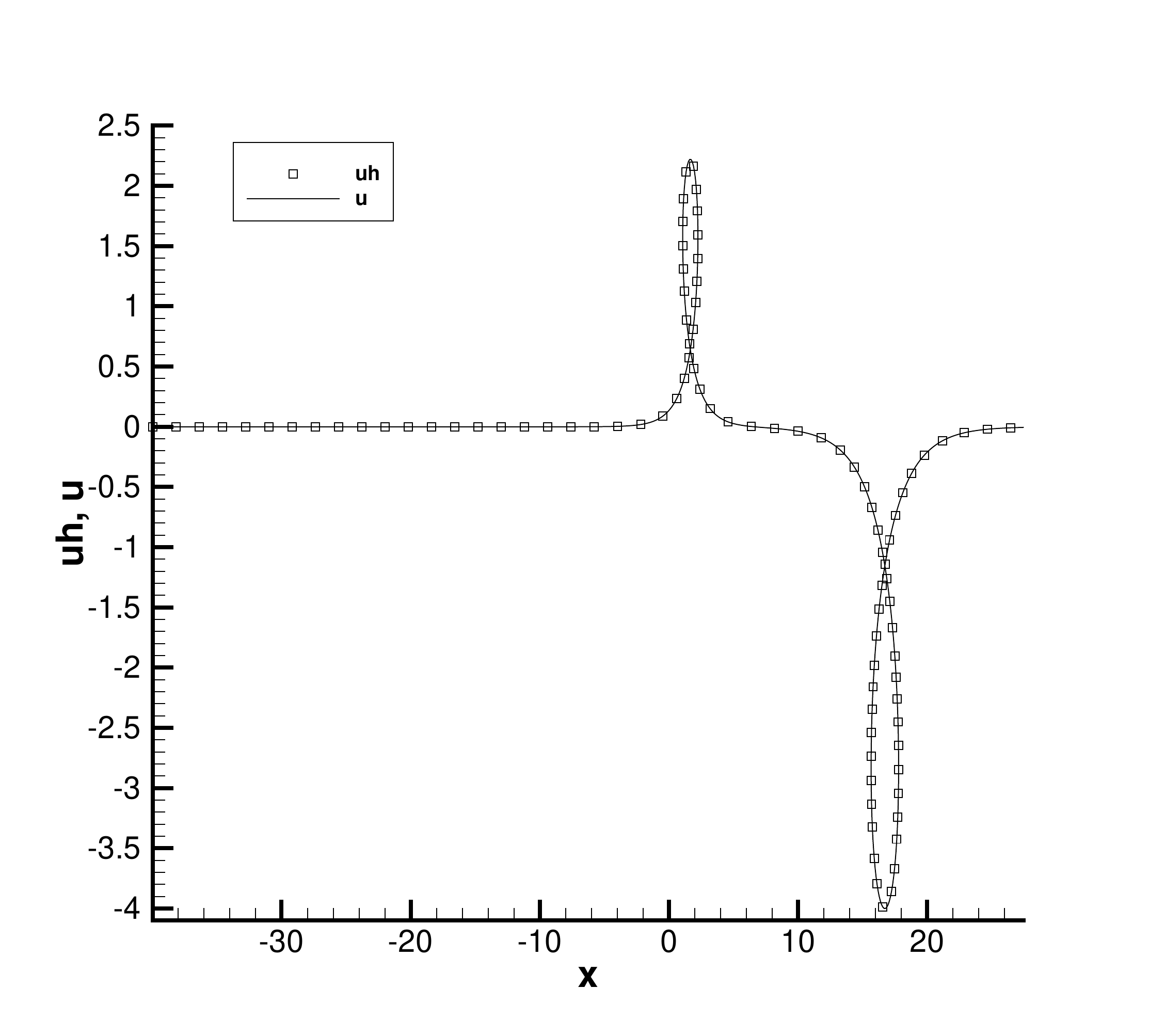}}
  \centerline{(a) t = 0.0}
\end{minipage}
\hfill
\begin{minipage}{0.49\linewidth}
  \centerline{\includegraphics[width=1.1\textwidth]{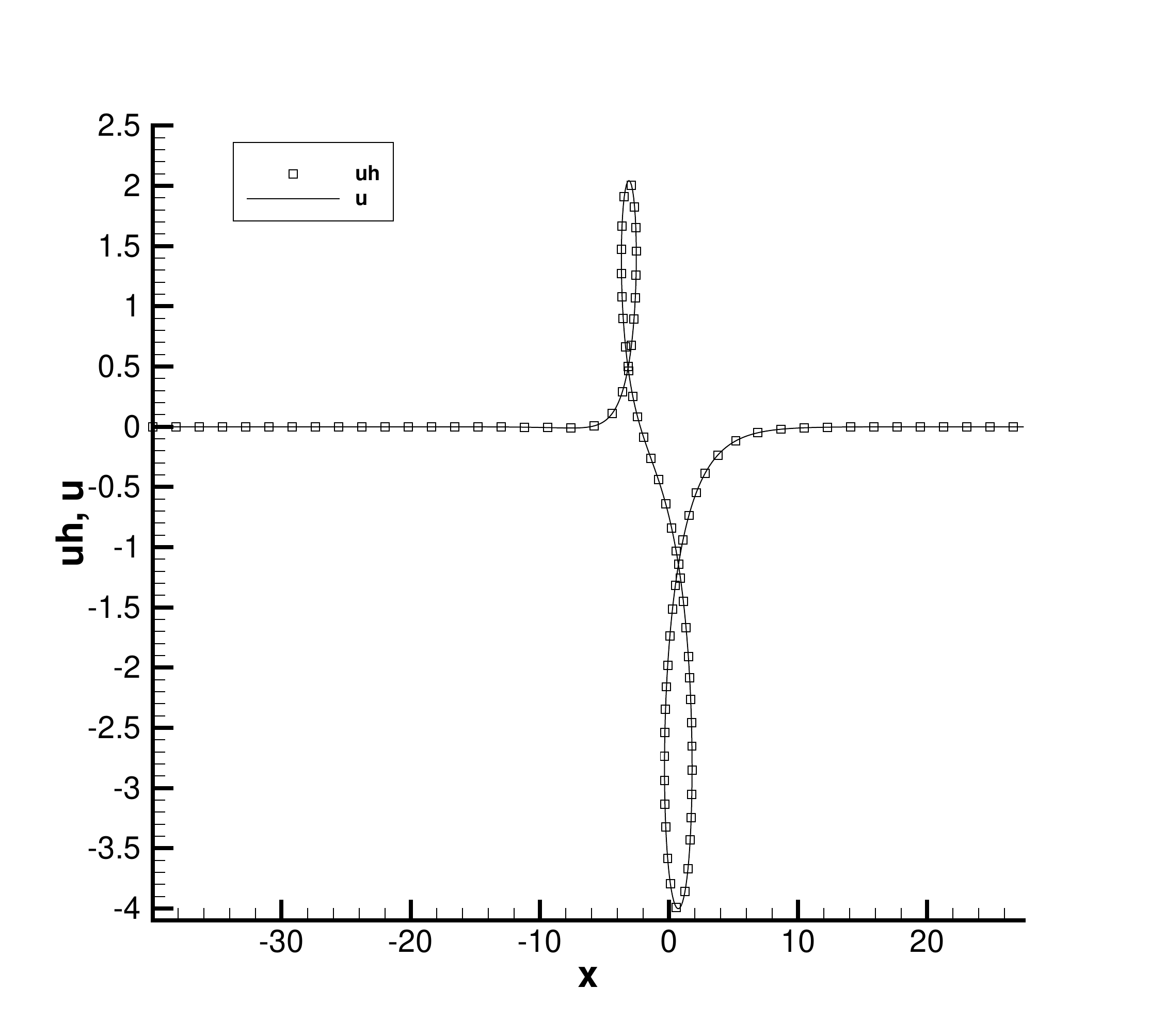}}
  \centerline{(b) t = 4.0}
\end{minipage}
\vfill
\begin{minipage}{0.49\linewidth}
  \centerline{\includegraphics[width=1.1\textwidth]{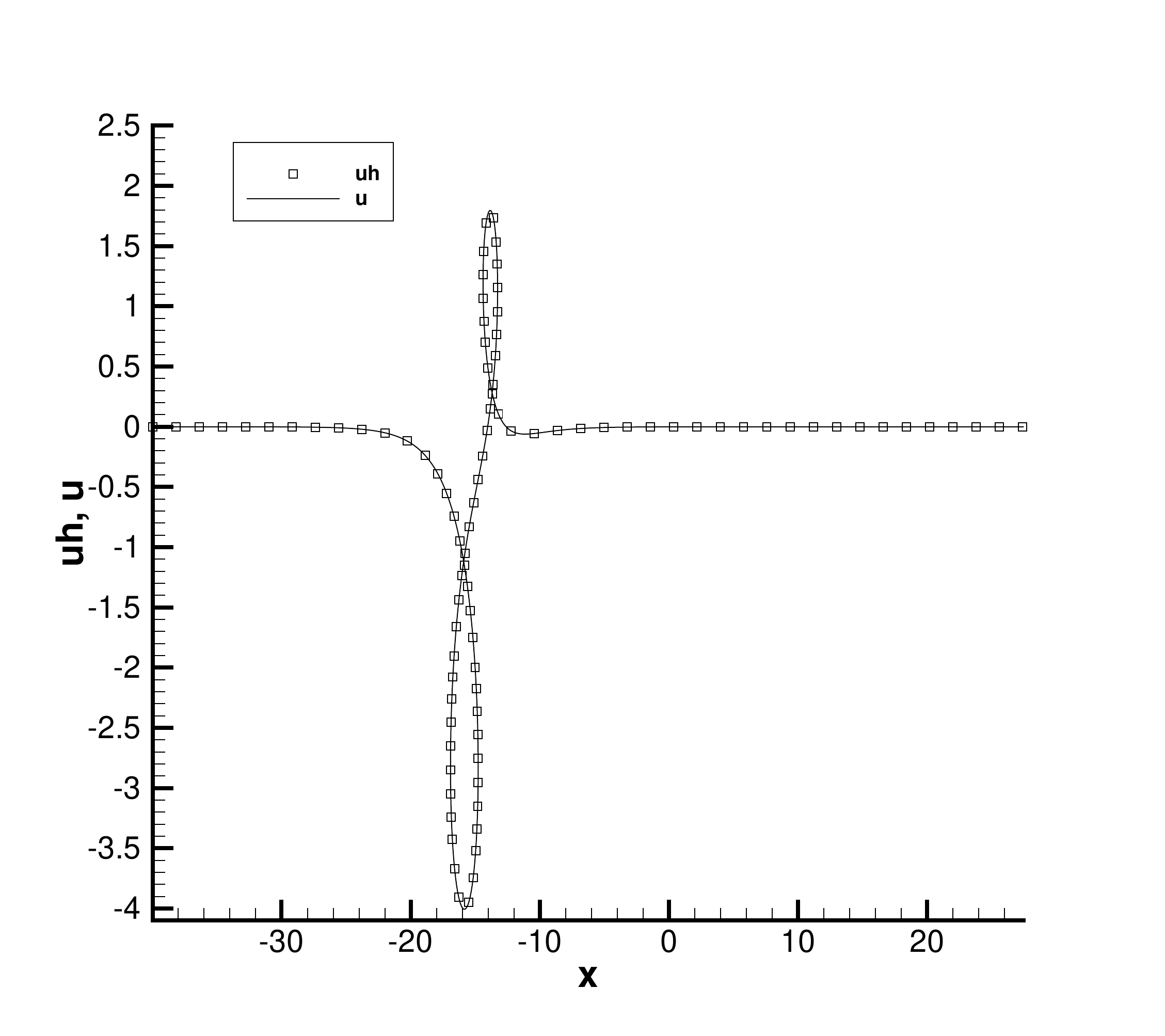}}
  \centerline{(c) t = 8.0}
\end{minipage}
\hfill
\begin{minipage}{0.49\linewidth}
  \centerline{\includegraphics[width=1.1\textwidth]{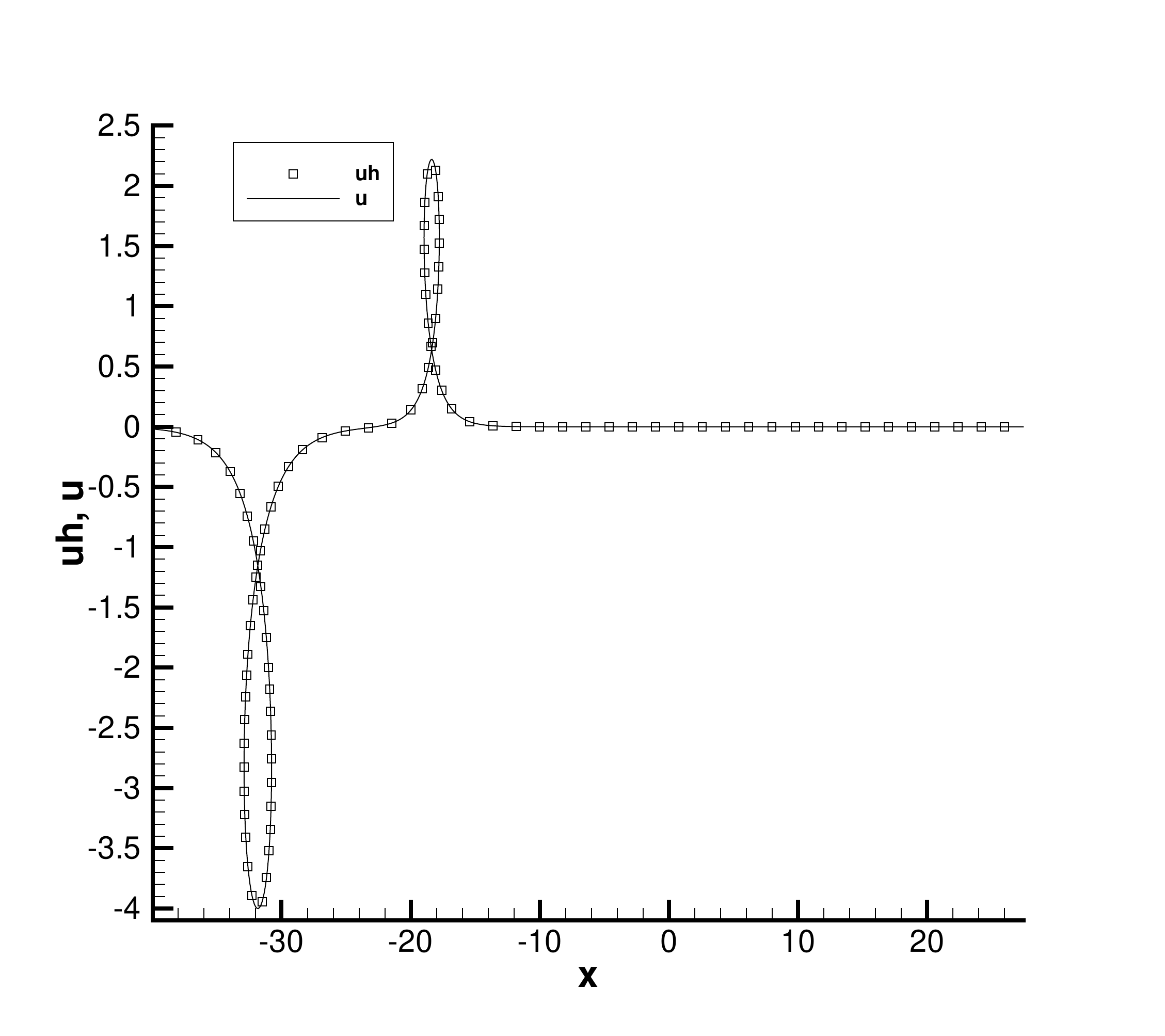}}
  \centerline{(d) t =12.0}
\end{minipage}
%\end{tabular}
\caption{ Loop-antiloop-soliton solution of the short pulse equation \eqref{eqn:short pulse}: $H_1$ conserved DG scheme with $N = 160$ cells, $P^2$ elements. The parameters $\alpha_1$ = $ e^{-2} $, $\alpha_2$ = $ -e^{-8} $, $p_1 = 0.9$, $p_2 = 0.5$.}
\label{fig:antiloop_CD}
\end{figure}

\begin{figure}[!htp]
%\begin{tabular}{cc}
\begin{minipage}{0.49\linewidth}
  \centerline{\includegraphics[width=1.1\textwidth]{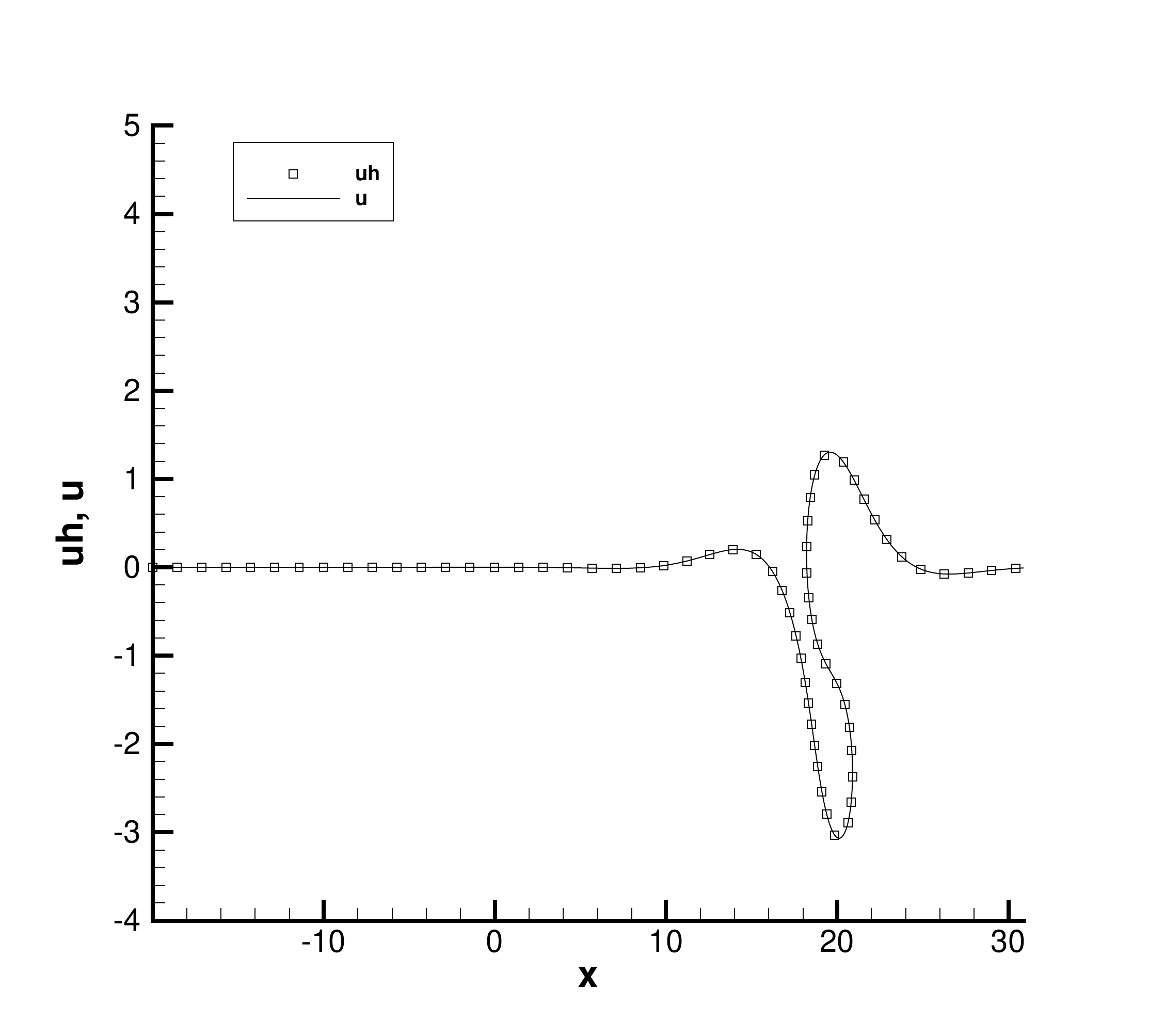}}
  \centerline{(a) t = 0.0}
\end{minipage}
\hfill
\begin{minipage}{0.49\linewidth}
  \centerline{\includegraphics[width=1.1\textwidth]{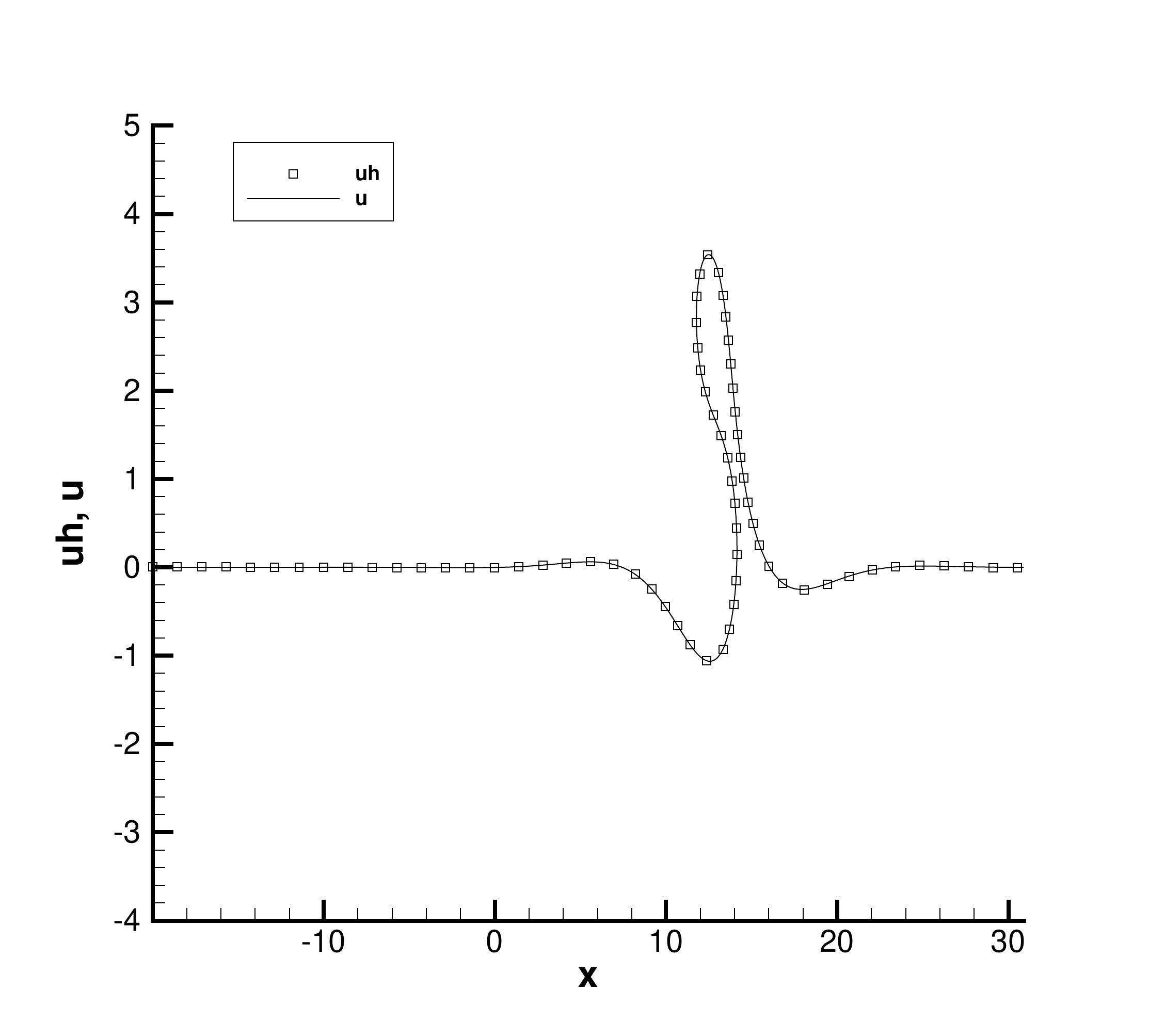}}
  \centerline{(b) t = 2.0}
\end{minipage}
\vfill
\begin{minipage}{0.49\linewidth}
  \centerline{\includegraphics[width=1.1\textwidth]{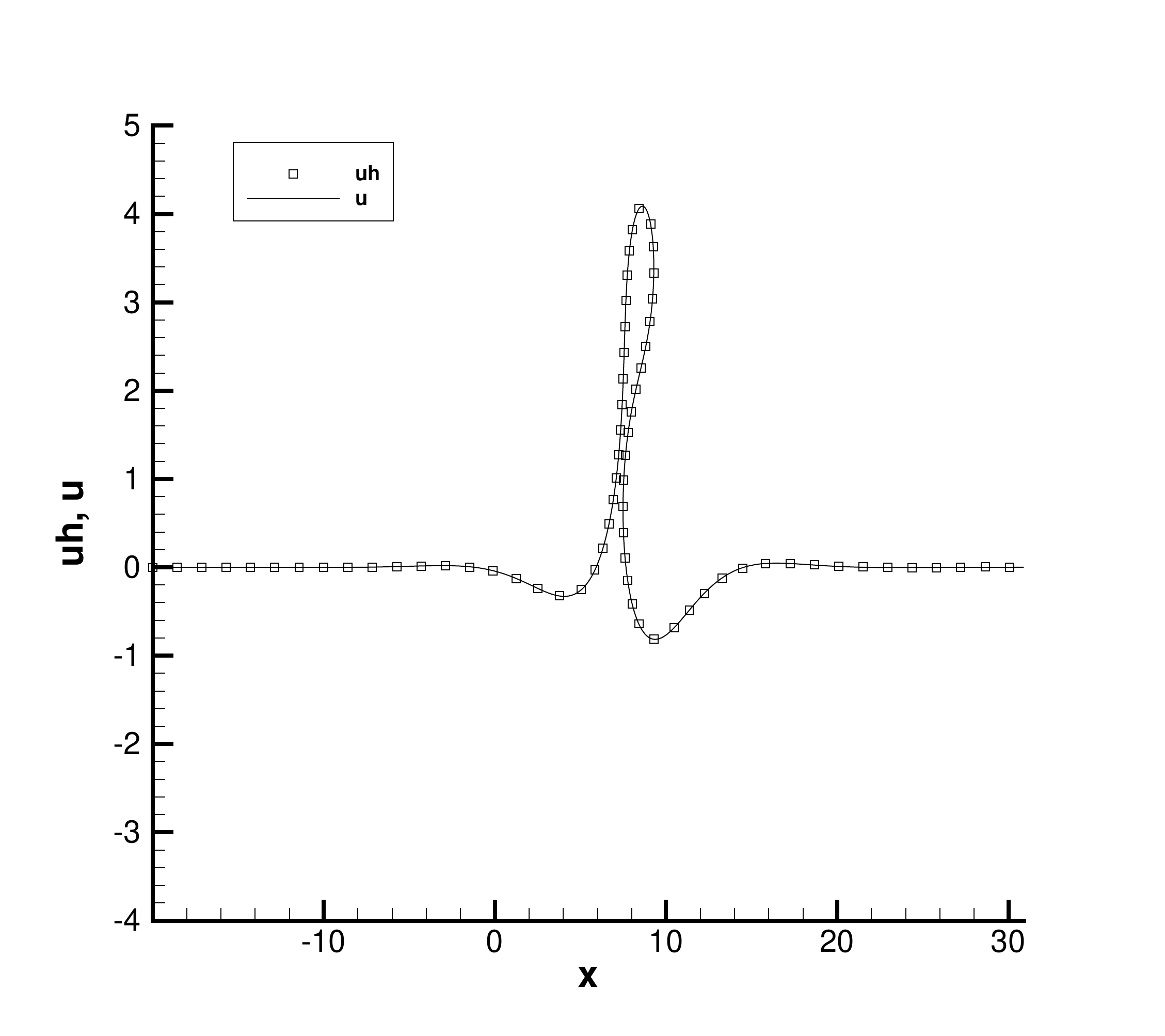}}
  \centerline{(c) t = 4.0}
\end{minipage}
\hfill
\begin{minipage}{0.49\linewidth}
  \centerline{\includegraphics[width=1.1\textwidth]{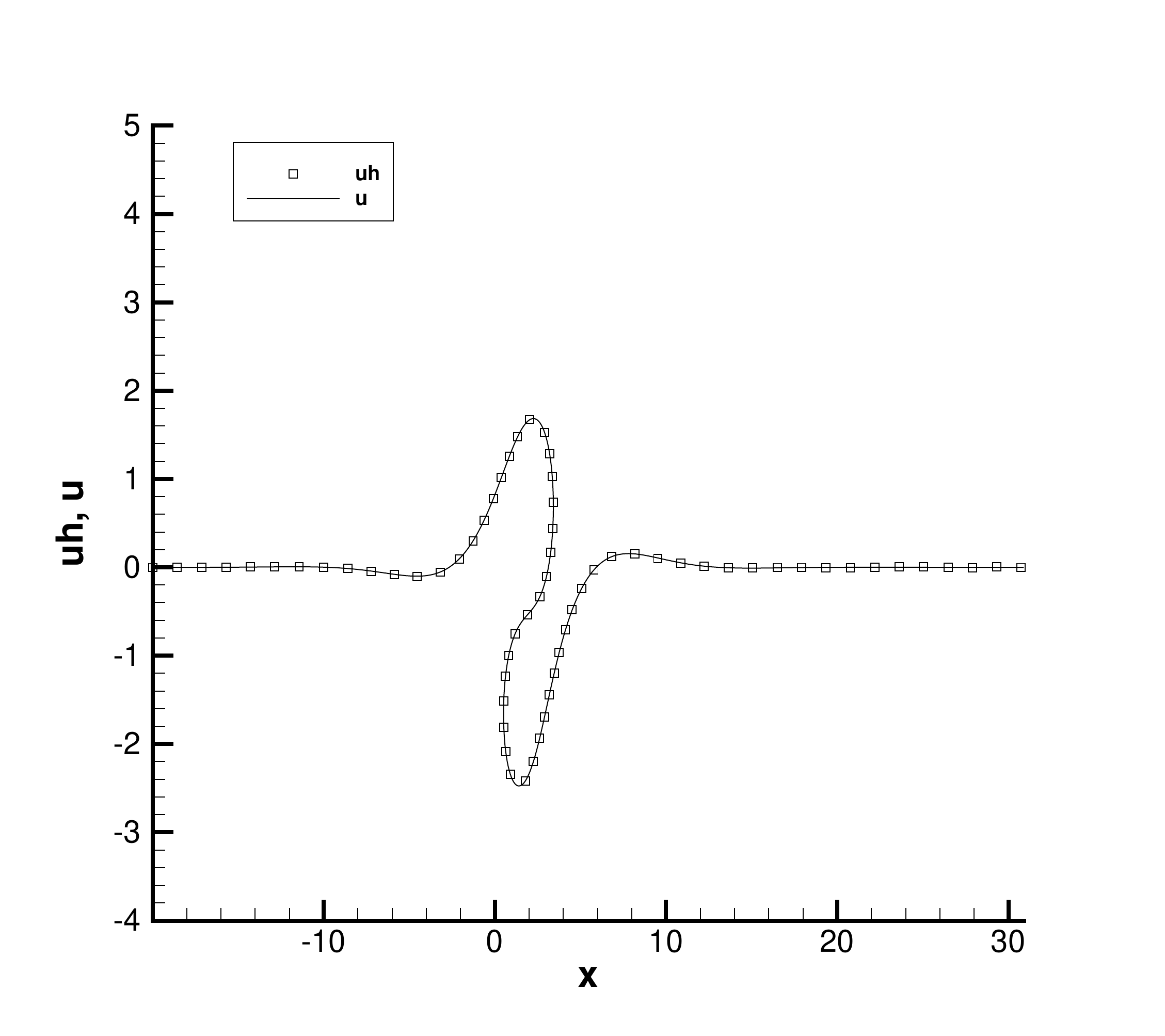}}
  \centerline{(d) t = 6.0}
\end{minipage}
%\end{tabular}
\caption{ Breather solution of the short pulse equation \eqref{eqn:short pulse}: Integration DG scheme with $N = 160$ cells, $P^2$ elements. The parameters $\alpha_1$ = $ e^{-8}(1+i) $, $\alpha_2$ = $ e^{-8}(1-i) $, $p_1 = 0.4+0.44i$, $p_2 = 0.4-0.44i$. }
\label{fig:Breather_CD}
\end{figure}

\end{example}

\begin{example}
In this example, we test the DG schemes for the sine-Gordon equation
\begin{equation}\label{exeqn: SG}
z_{ys} = \sin{z}.
\end{equation}
with the 1-soliton solution:
\begin{equation}\label{eqn:loop_SG}
z(y,s) = 4\arctan(\exp(y+s)).
\end{equation}
In Table \ref{tab:SG}, we give the errors and convergence rates of two DG schemes. The DG scheme \eqref{eqn:numerical_SG} can reach optimal $(k+1)$-$th$ order of accuracy. And the integration DG scheme \eqref{eqn:numerical_SG_integration} can also achieve the optimal order of accuracy, which is  $(k+2)$-$th$ order for $u_h$.
\begin{table}[H]
\begin{center}
\begin{tabular}{|c|c|cccc|cccc|}
  \hline
  % after \\: \hline or \cline{col1-col2} \cline{col3-col4} ...
           &         &  \multicolumn{4}{c|}{DG scheme \eqref{eqn:numerical_SG}} & \multicolumn{4}{c|}{integration DG scheme \eqref{eqn:numerical_SG_integration}}\\\hline
           & N       &$ \norm{z-z_h}_{L^2}$    & order   &$ \norm{z-z_h}_{L^\infty}$   & order
                     &$ \norm{z-z_h}_{L^2}$    & order   &$ \norm{z-z_h}_{L^\infty}$   & order\\\hline

    $P^1$      &  40  &1.95E-03 &-- &2.56E-02 &--
                      &1.53E-04 &-- &1.65E-03 &--\\

               &  80  &4.90E-04 &1.99 &7.17E-03 &1.83
                      &1.94E-05 &2.99 &2.28E-04 &2.85\\

              & 160  &1.23E-04 &2.00 &1.84E-03 &1.96
                     &2.43E-06 &3.00 &2.98E-05 &2.94\\

              & 320  &3.07E-05 &2.00 &4.64E-04 &1.99
                     &3.04E-07 &3.00 &3.78E-06 &2.98\\\hline

             % & 640  &7.67E-06 &2.00 &1.16E-04 &2.00
%                     &3.79E-08 &3.00 &4.75E-07 &2.99\\\hline

      $P^2$ &  40  &1.41E-04 &3.21 &1.32E-03 &3.09
                   &7.91E-06 &4.22 &7.96E-05 &4.15\\

            &  80  &1.77E-05 &2.99 &1.84E-04 &2.84
                   &4.99E-07 &3.99 &5.61E-06 &3.83\\

            & 160  &2.22E-06 &3.00 &2.37E-05 &2.96
                   &3.13E-08 &4.00 &3.62E-07 &3.95\\

            & 320  &2.77E-07 &3.00 &2.99E-06 &2.99
                   &1.96E-09 &4.00 &2.28E-08 &3.99\\\hline

          %  & 640  &3.47E-08 &3.00 &3.74E-07 &3.00
%                   &1.22E-10 &4.00 &1.43E-09 &4.00\\\hline

\end{tabular}
\end{center}
\caption{\label{tab:SG}1-soliton solution \eqref{eqn:loop_SG} of the sine-Gordon equation \eqref{exeqn: SG}: The computational domain $[-5,5]$, at time $T = 1$.} %SDC3
\end{table}
We define
\begin{equation}
\Delta H_2 = \sum\limits_{j=1}^{N+1}\Big( \int_{I_j}\omega_h^2\Big{|}_{t=T} \ dy -\int_{I_j} \omega_0^2 \ dy\Big)
\end{equation}
where $\omega_h$ is defined in \eqref{eqn:numerical_SG_3}. Compared with DG scheme \eqref{eqn:numerical_SG}, the fluctuation of $H_2$ in integration DG scheme is very slight, as shown in Table \ref{ex:SG_change} .
\begin{table}[H]
\begin{tabular}{cc|cc}
\hline
      $\Delta H_2$            && DG scheme\eqref{eqn:numerical_SG} & integration DG scheme\eqref{eqn:numerical_SG_integration}    \\\hline
     $P^1$        &&6.39E-03  &   6.85E-07               \\
     $P^2$        &&7.12E-06  &   4.93E-10	                 \\
     $P^3$        && 7.75E-09 &   4.90E-10	                 \\\hline

\end{tabular}
\caption{\label{ex:SG_change} The time evolution of conserved quantity $H_2$ for 1-soliton solution \eqref{eqn:loop_SG} for the sine-Gordon equation \eqref{exeqn: SG}, with the computational domain $[-30,30]$ and $N = 320$ cells at time $ T = 10$.}
\end{table}

In \cite{Sakovich_2006_JPA}, two-loops-soliton, loop-antiloop-soliton and breather solutions can be found for the short pulse equation \eqref{eqn:short pulse}.  We use these solutions to validate our numerical schemes.
We plot figures of the antiloop-loop and breather solutions for the short pulse equation \eqref{eqn:short pulse} in Figure \ref{fig:antiloop-loop_SG}, \ref{fig:Breather_SG}. For those two cases, the initial data of the sine-Gordon equation is required to be continuous. Thus we can get the temporal and spatial derivatives $z_s, z_y$ of $z$. The initial condition for the two-loops-soliton solution is discontinuous, so we give the initial derivatives piecewisely.   The integration DG scheme  obtains the accurate solution as shown in Figure \ref{fig:2loop_SG}. These three kinds of solutions can be resolved very well compared with the exact solutions in \cite{Sakovich_2006_JPA}.
\begin{figure}[H]
\begin{center}
\begin{tabular}{ccc}
\includegraphics[width=0.33\textwidth]{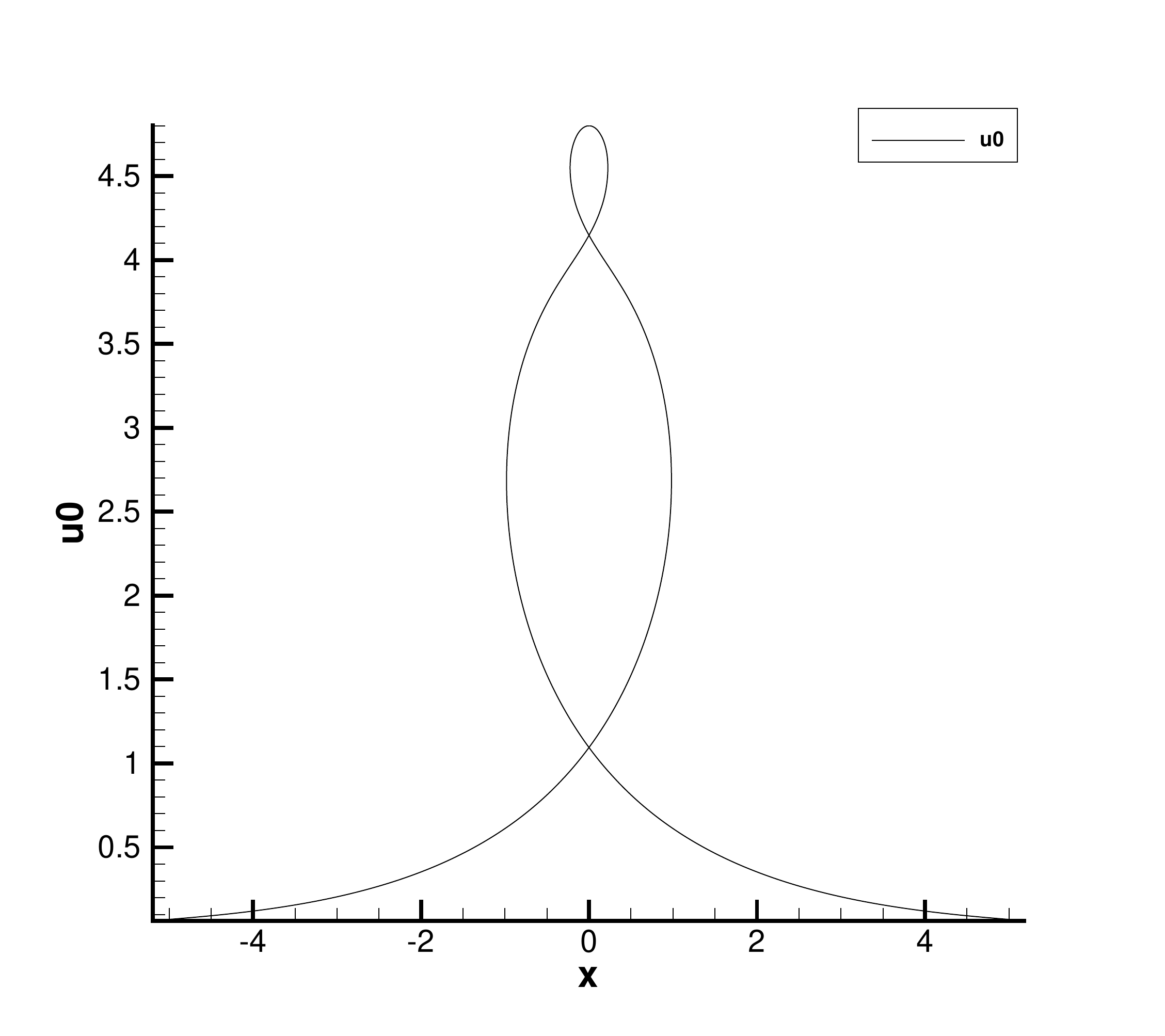}&\includegraphics[width=0.33\textwidth]{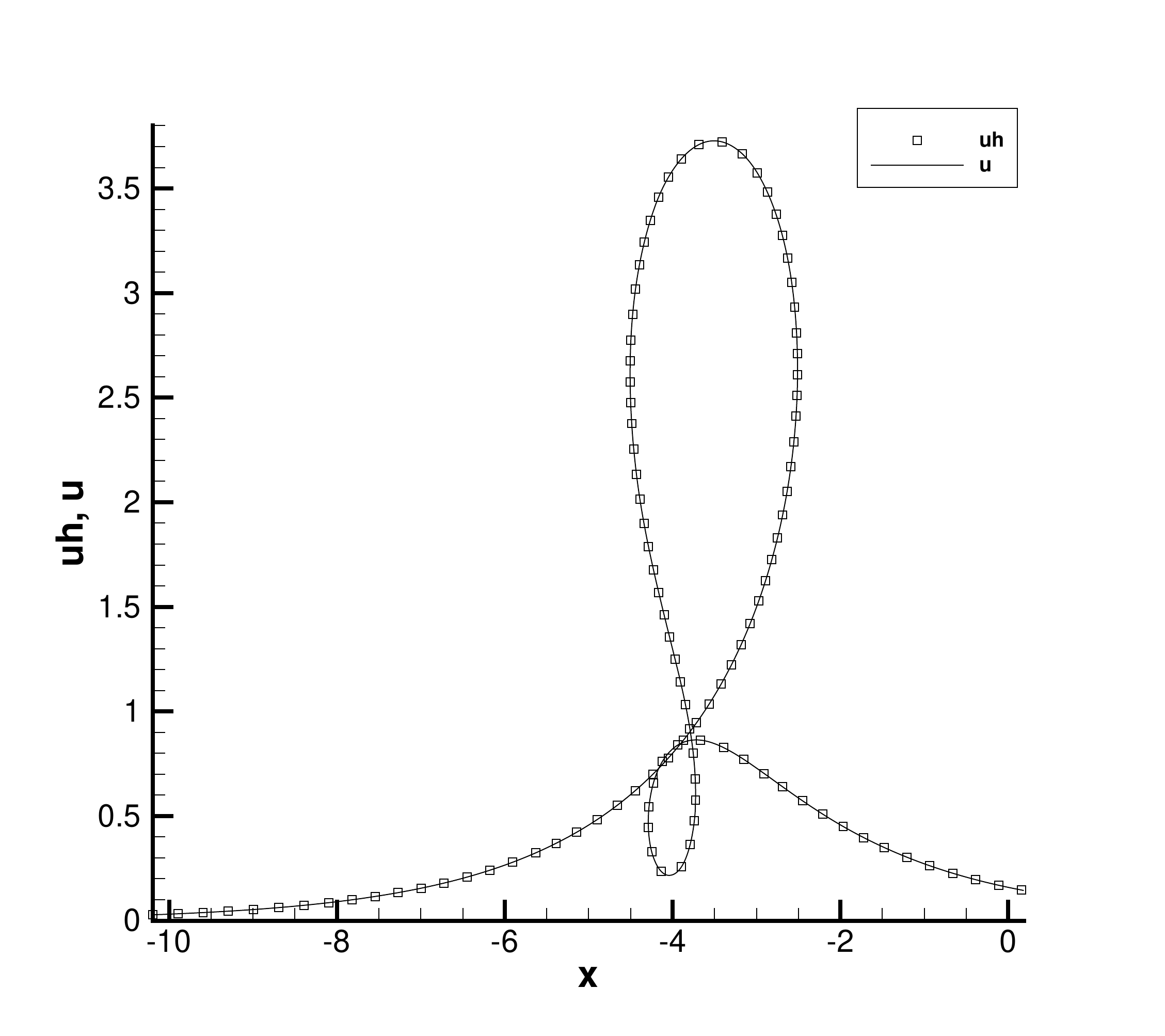}&\includegraphics[width=0.33\textwidth]{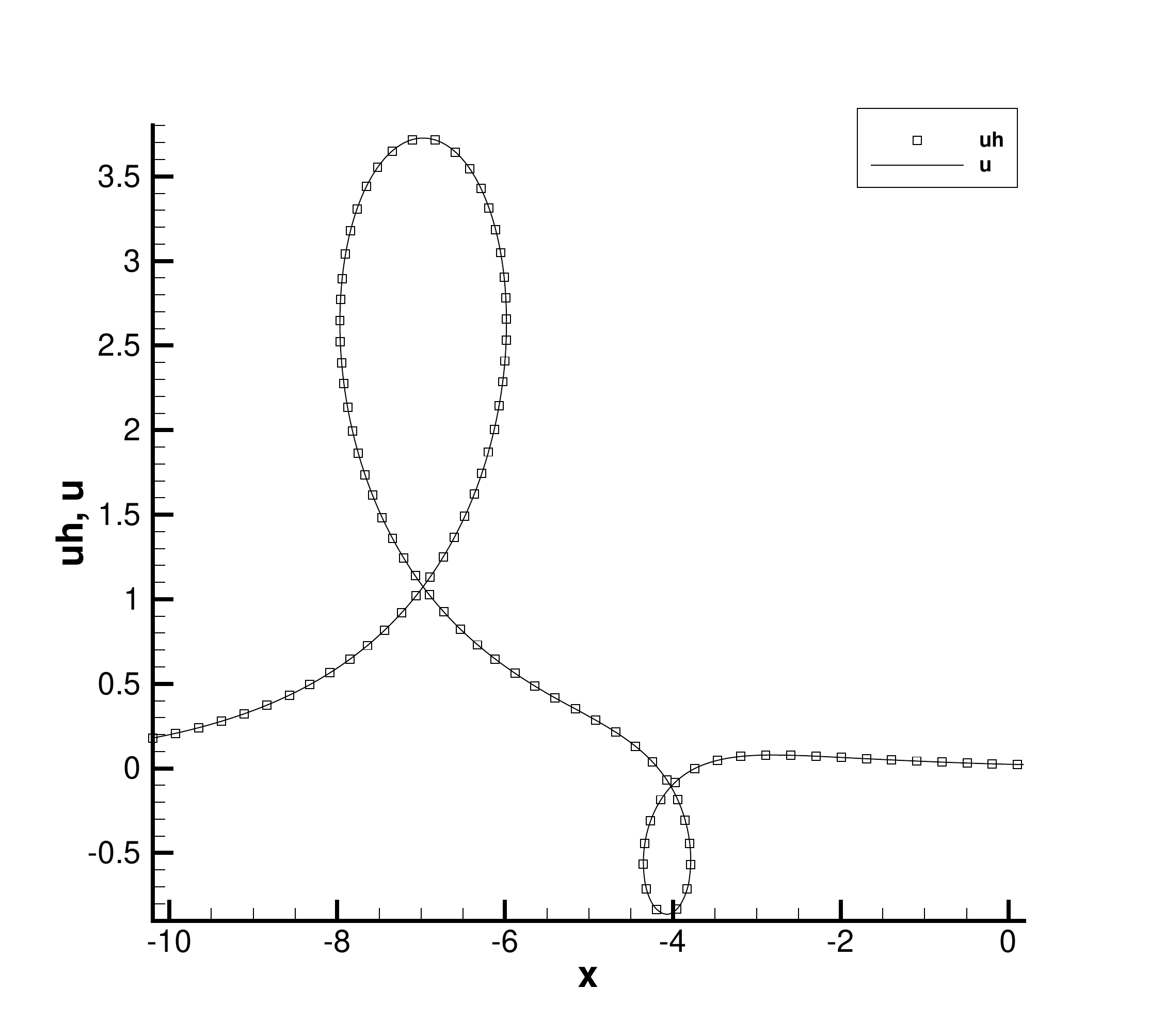}\\
(a) t = 0.0 & (b) t = 1.0  &(c) t = 2.0
\end{tabular}
\end{center}
\caption{\label{fig:antiloop-loop_SG} Antiloop-loop-soliton solution of the short pulse equation \eqref{eqn:short pulse}: Integration DG scheme with $N = 160 $ cells, $P^2$ elements.}%, $m = 1.2$.   }
\end{figure}

\begin{figure}[H]
\begin{center}
\begin{tabular}{ccc}
\includegraphics[width=0.33\textwidth]{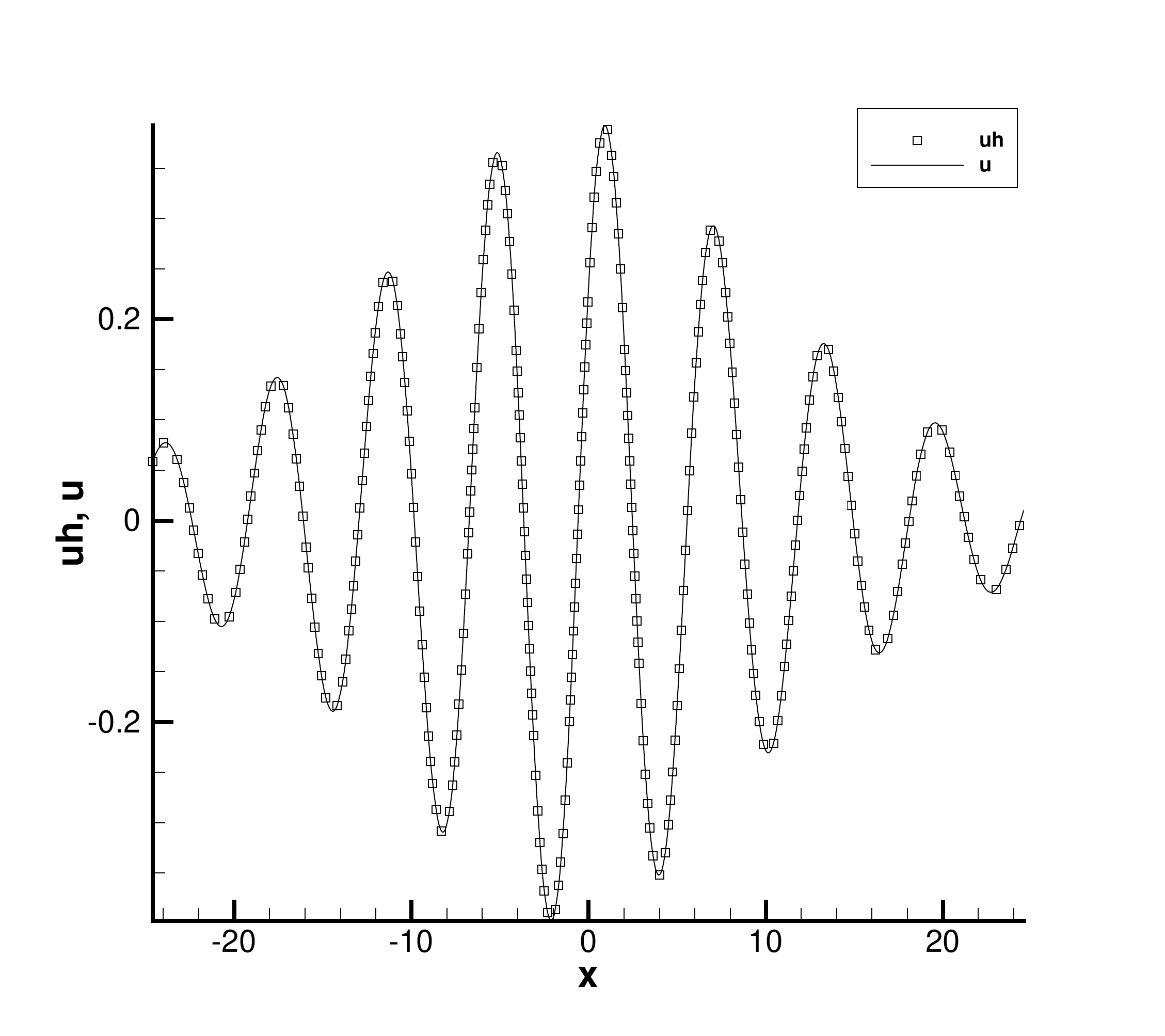}&\includegraphics[width=0.33\textwidth]{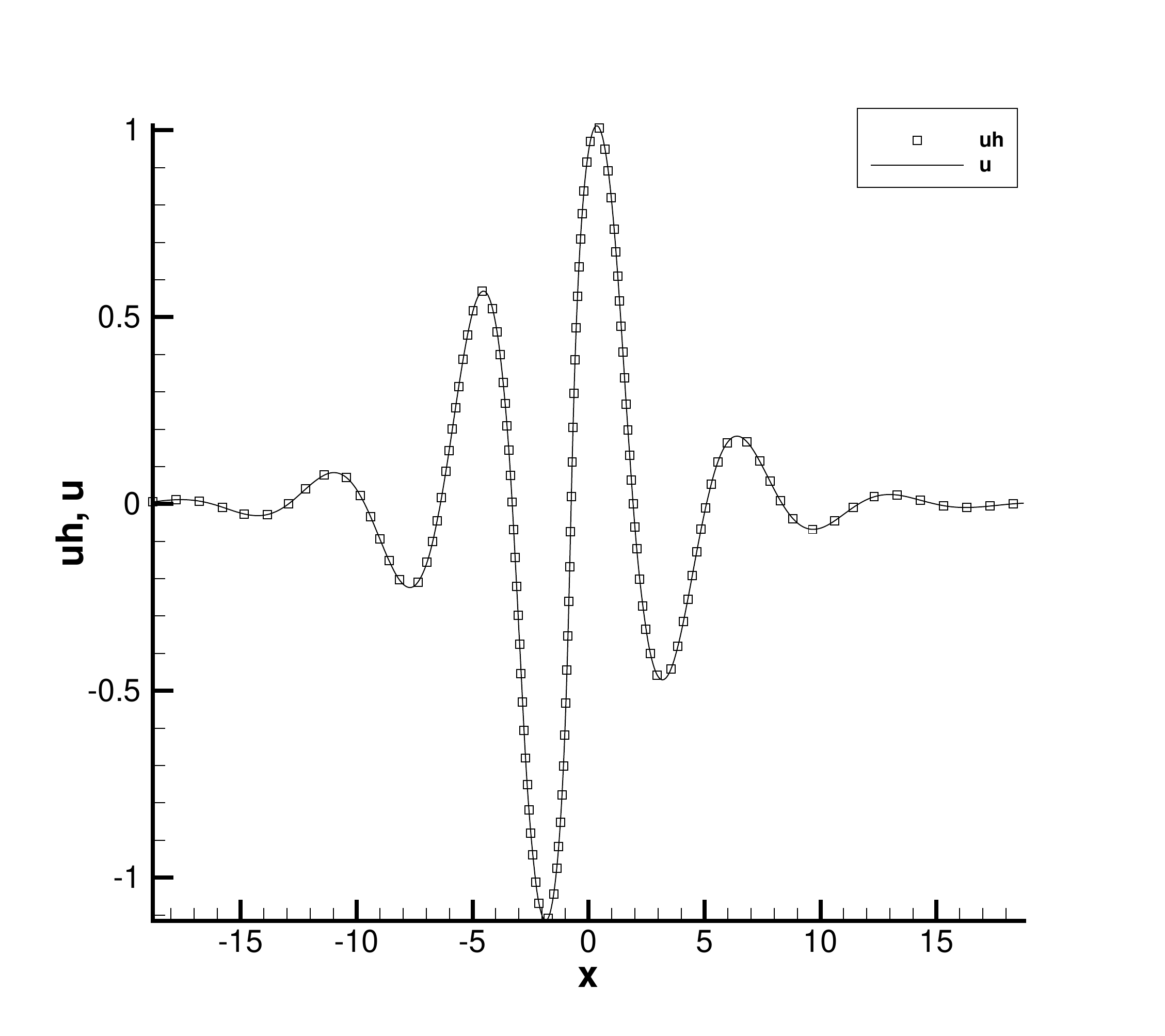}&\includegraphics[width=0.33\textwidth]{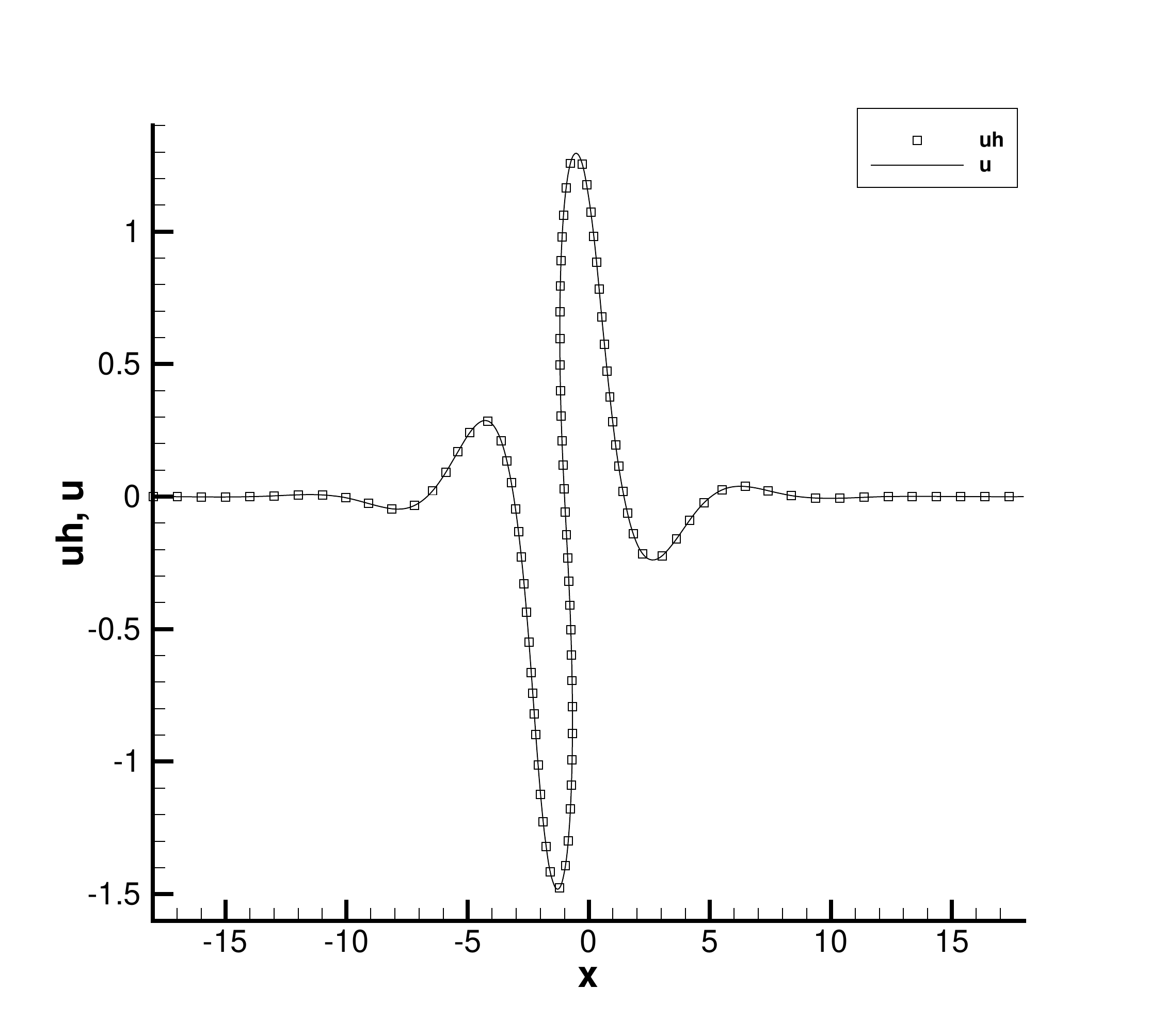}\\
%(a) m = 0.1 & (b) m = 0.3 & (c) m = 0.5
\end{tabular}
\end{center}
\caption{\label{fig:Breather_SG} Breather solution of the short pulse equation \eqref{eqn:short pulse}: DG scheme \eqref{eqn:numerical_SG} with $N = 160 $ cells, $P^2$ elements at time $T = 1$.    }
\end{figure}
\begin{figure}[htp]
\begin{center}
\begin{tabular}{ccc}
\includegraphics[width=0.33\textwidth]{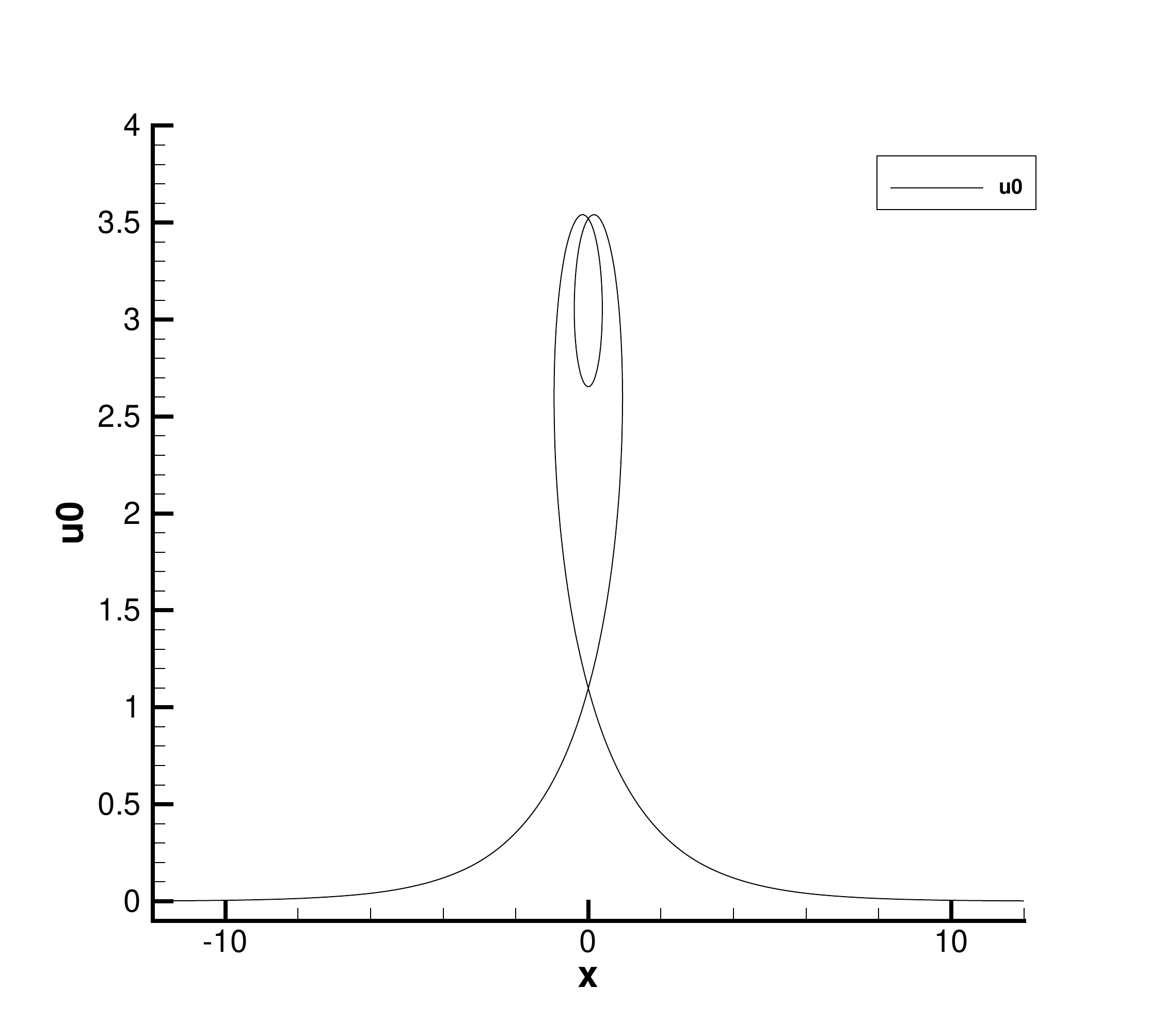}&\includegraphics[width=0.33\textwidth]{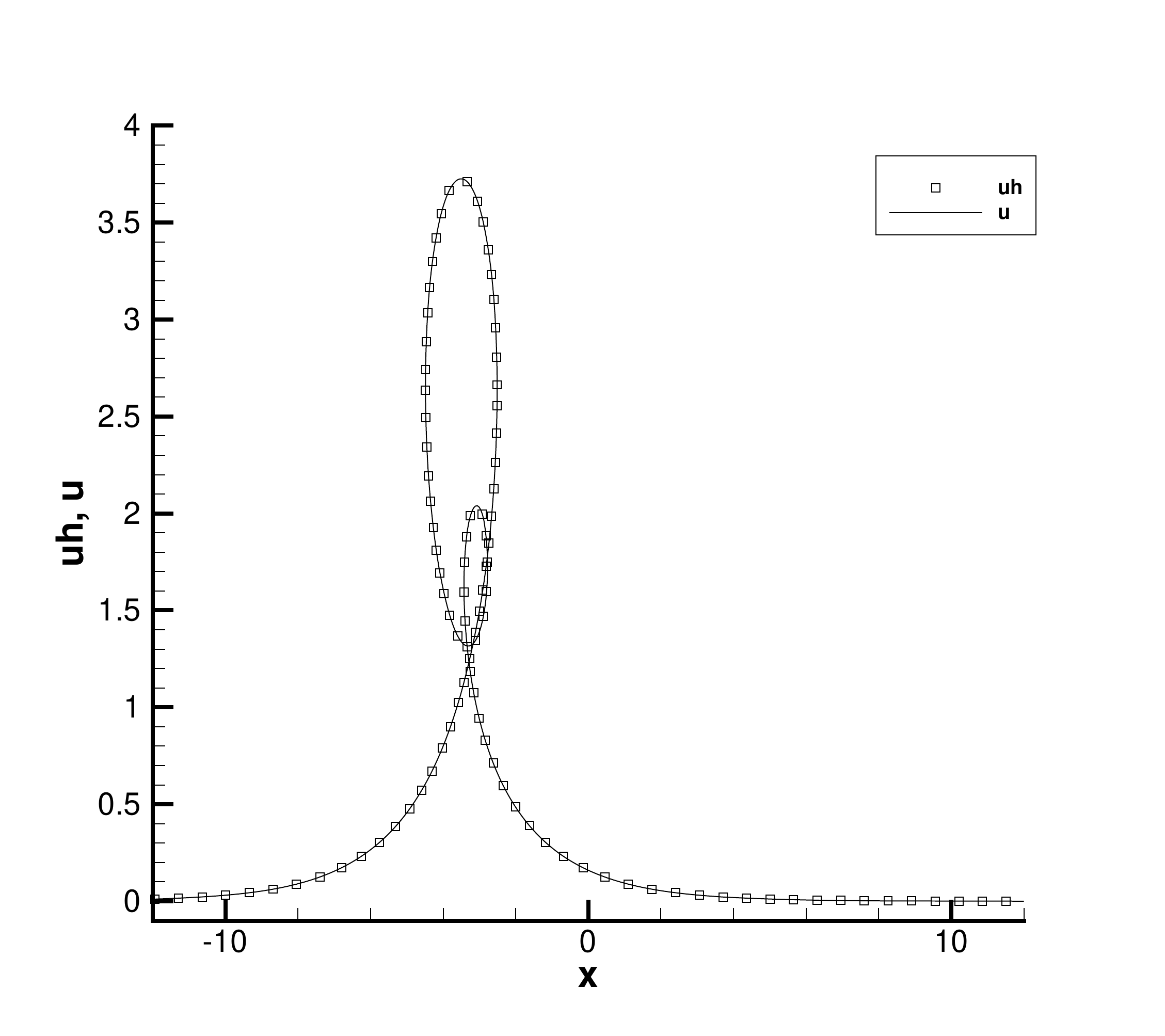}&\includegraphics[width=0.33\textwidth]{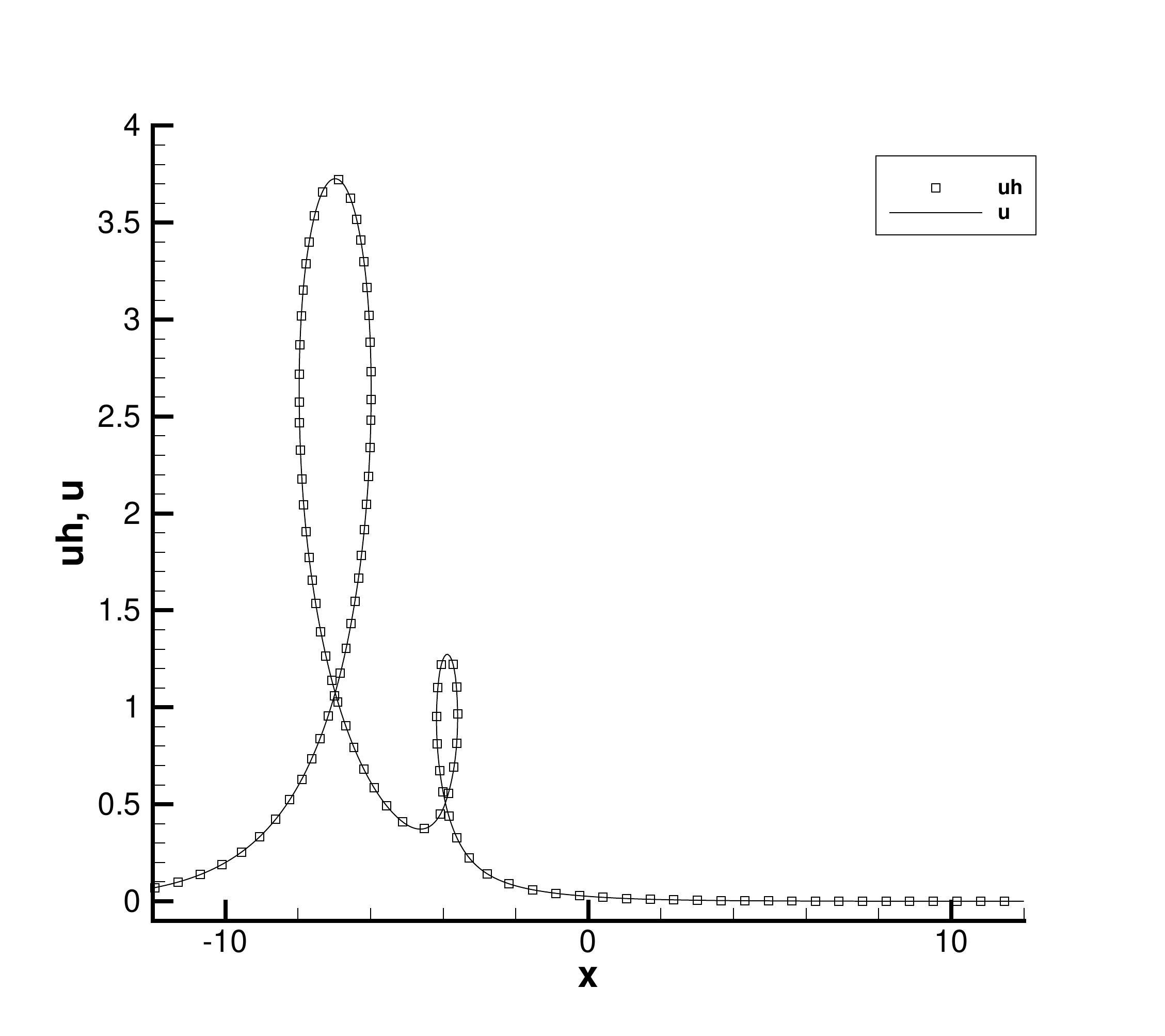}\\
(a) t = 0.0 & (b) t = 1.0  &(c) t = 2.0
\end{tabular}
\end{center}
\caption{\label{fig:2loop_SG} Two-loops-soliton solution of the short pulse equation \eqref{eqn:short pulse}: Integration DG scheme with $N = 160 $ cells, $P^2$ elements.}% $m = 1.2$.}
\end{figure}
\end{example}

\begin{example}\label{ex2}
In this example, we consider the complex short pulse equation
\begin{equation}\label{eqn: complex_SPE1_ex}
\begin{split}
 &u_{xt} = u + \frac{1}{2}(\abs{u}^2u_x)_x,\ u\in \mathbb{C}
\end{split}
\end{equation}
 which is linked with the complex CD system \eqref{eqn:ComplexCD1}. The exact solution can be also expressed as the determinant form \eqref{soliton_solution}, the
%\begin{equation}
%\begin{split}
%&u = \frac{g}{f}, \\
%&x = y_0 + \int_{y_0}^y \rho(\zeta,s) d\zeta = y - 2(\ln f)_s, \ t = s,
%\end{split}
%\end{equation}
%where f, g is defined as that of example \ref{ex1}, $ y_0 =0 $,
elements of $A_I$ and $B_I$ are distinct from Example \ref{ex1}:
\begin{equation}\label{matrix_element}
\begin{split}
&a_{ij} = \frac{1}{2(\frac{1}{p_i}+ \frac{1}{p_j^*})}e^{\xi_i + \xi^*_j}, \ b_{ij} = \frac{\alpha_i^*\alpha_j}{2(\frac{1}{p^*_i}+ \frac{1}{p_j})}. \\
&\text{with} \ \xi_i = p_iy + \frac{s}{p_i} + y_{i0}, \ \xi^*_i = p_i^*y + \frac{s}{p_i^*} + y^*_{i0},\ i = 1, 2, \ldots, m,
\end{split}
\end{equation}
where $*$ denotes the complex conjugate, and $p_i$, $\alpha_i $, $y_{i0}$ $\in \mathbb{C}$ are constants. The shape of solution depends on the choice of parameters $p_i$. For 2-soliton solution, if $p_1 = p_2 \in \mathbb{C}$, then it degenerates to 1-soliton solution. Otherwise, it has two solitons. We first take the parameters as $p_1 = 0.5 + i$, $p_2 = 0.8 + 2i$, i.e. two smooth-soliton, so called breather solution. We plot the moduli $\abs{u}$ and $\abs{u_h}$ at $T = 0, 25,35,60$ in Figure \ref{fig:2-soliton_complex_CD_abs}. The computational domain is $[-50,50]$ with cells $N = 320$. In Figure \ref{fig:2-soliton_complex_CD_abs2}, an interaction of loop-soliton and cuspon-soliton is shown with meshes in $[-20,20]$ and parameters $p_1 = 0.9 + 0.5i$, $p_2 = 2.0 + 2i$. We can see clearly that the moving soliton interaction is resolved very well comparing with \cite{Feng_2014_PJMI}.
\begin{figure}[htp]
%\begin{tabular}{cc}
\begin{minipage}{0.49\linewidth}
  \centerline{\includegraphics[width=1.1\textwidth]{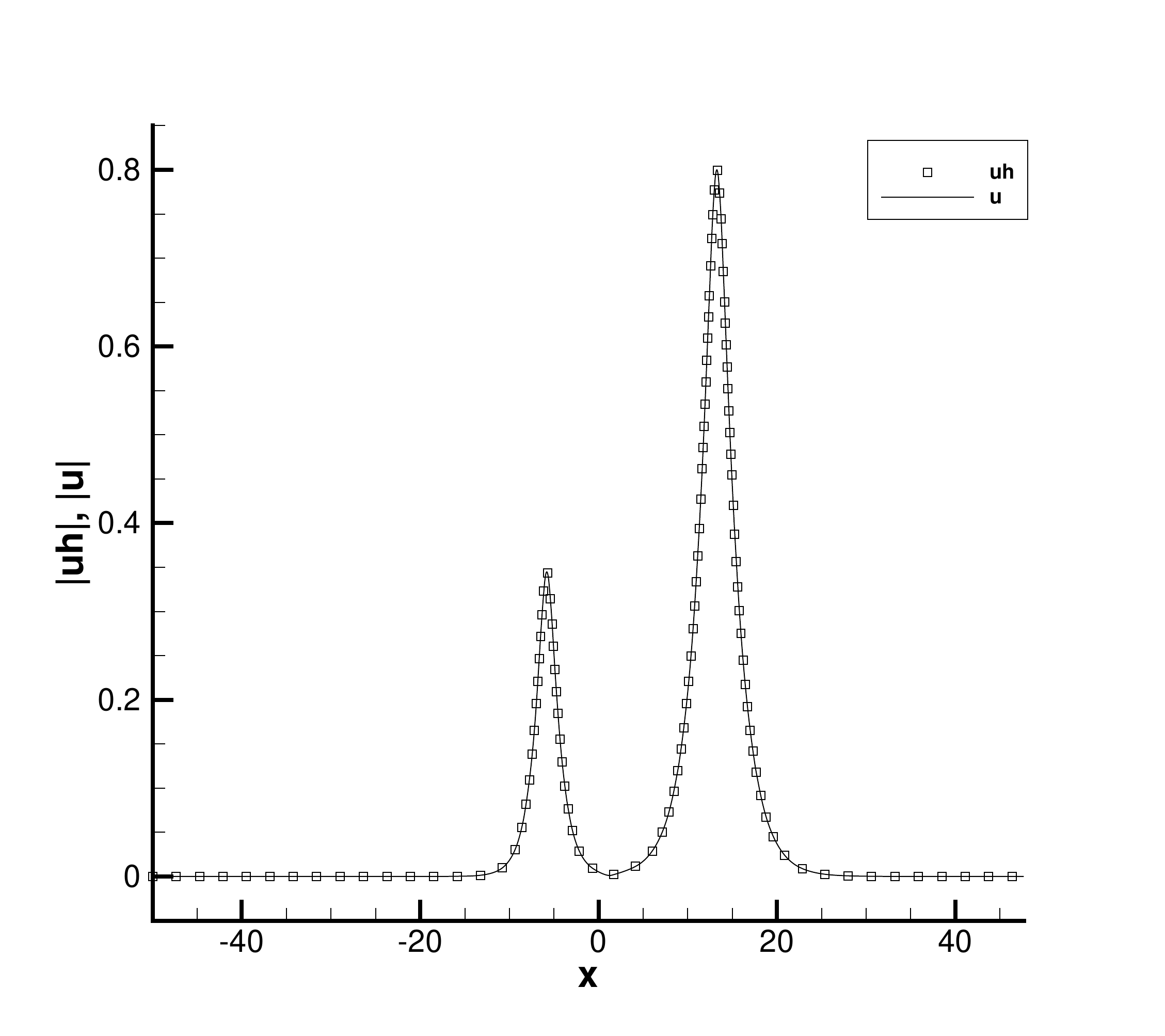}}
  \centerline{(a) t = 0.0}
\end{minipage}
\hfill
\begin{minipage}{0.49\linewidth}
  \centerline{\includegraphics[width=1.1\textwidth]{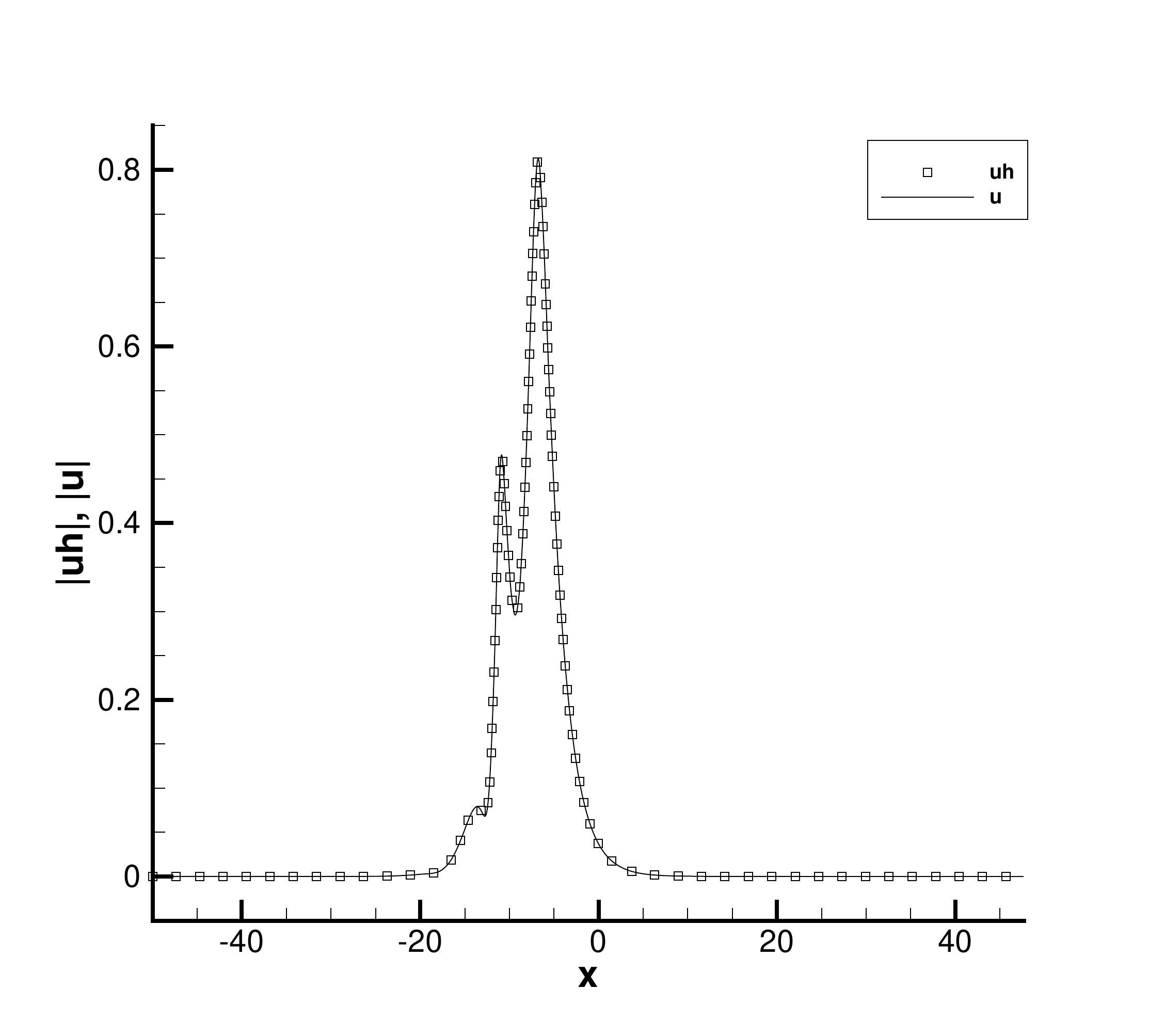}}
  \centerline{(b) t = 25.0}
\end{minipage}
\vfill
\begin{minipage}{0.49\linewidth}
  \centerline{\includegraphics[width=1.1\textwidth]{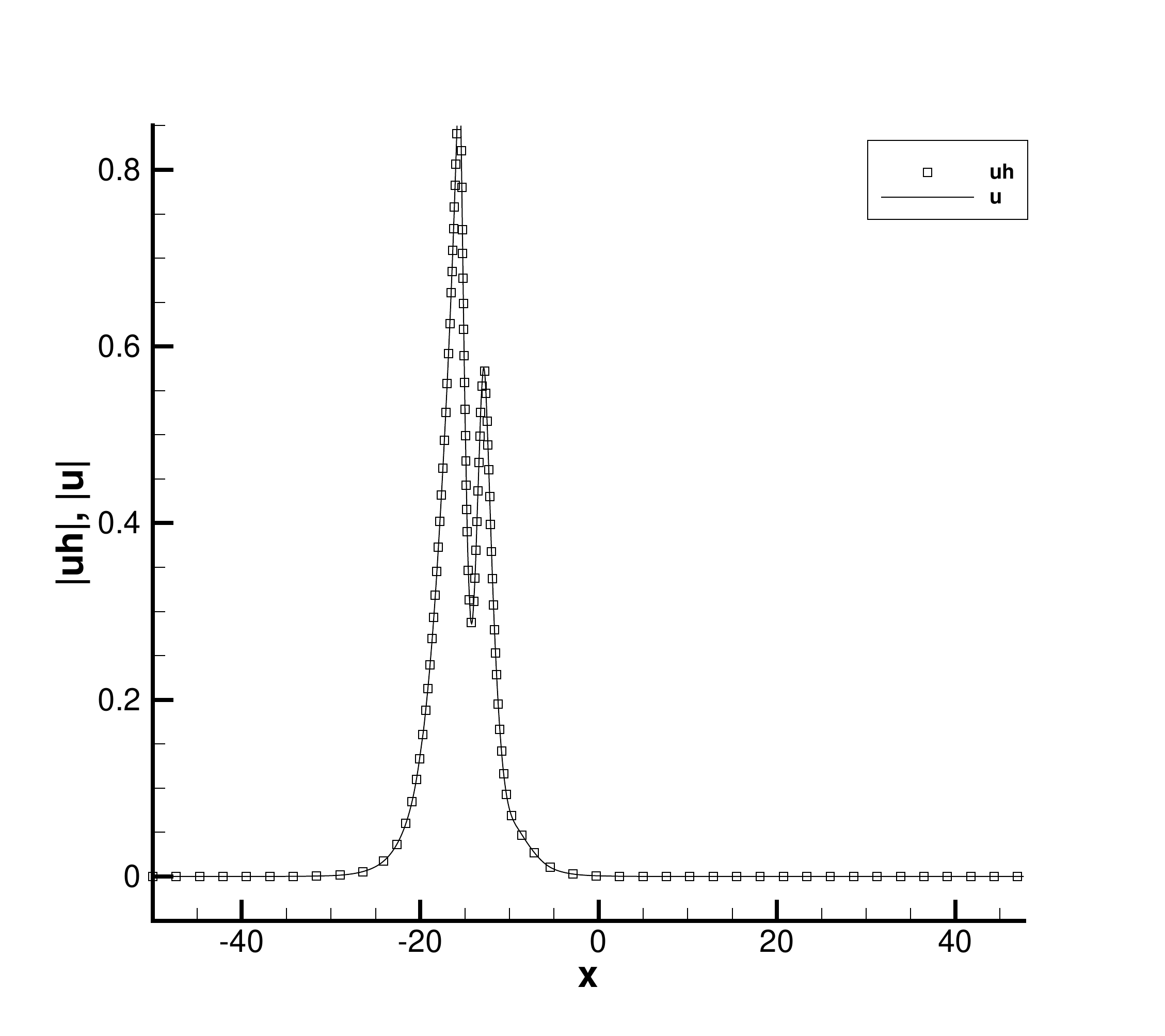}}
  \centerline{(c) t = 35.0}
\end{minipage}
\hfill
\begin{minipage}{0.49\linewidth}
  \centerline{\includegraphics[width=1.1\textwidth]{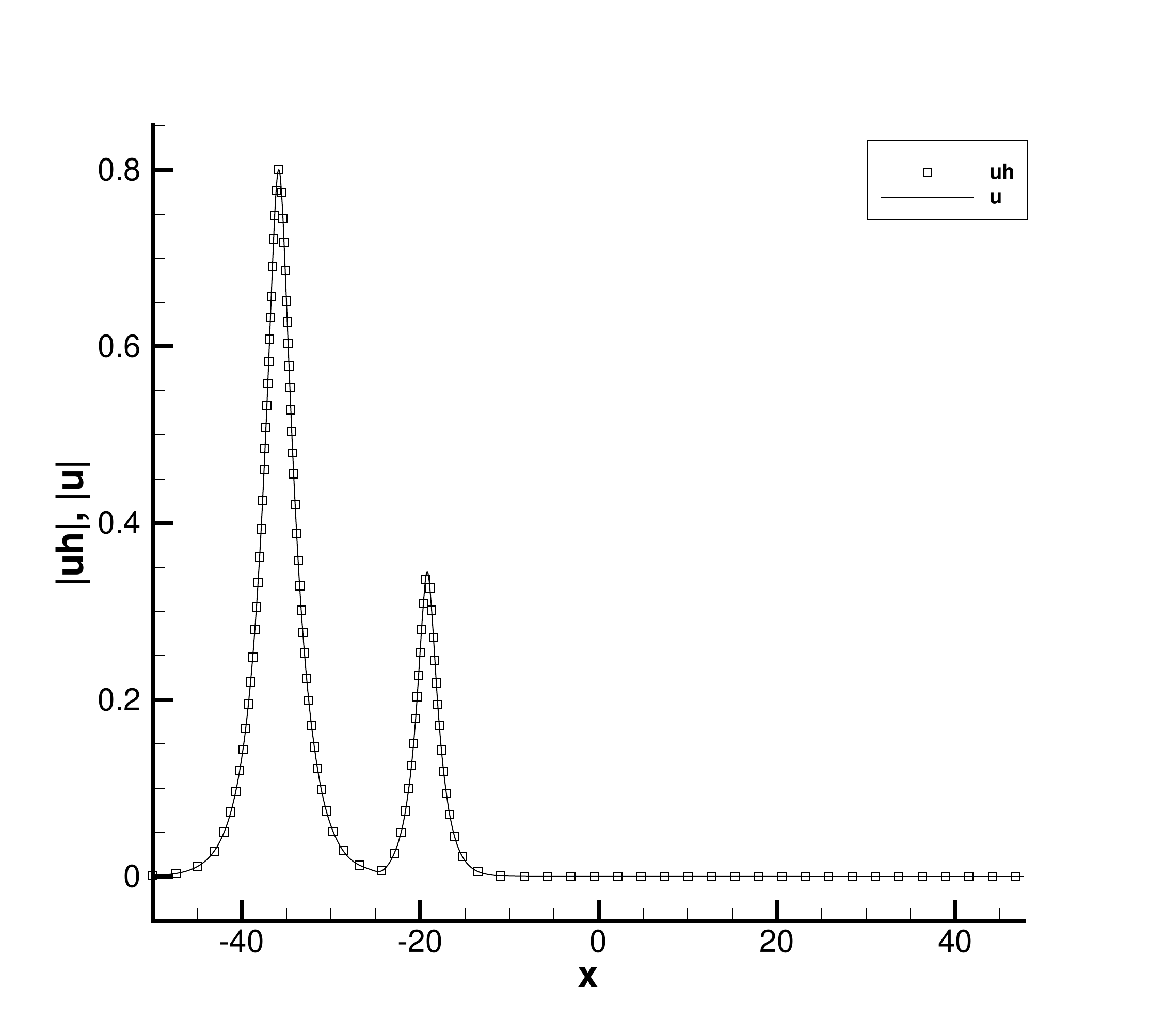}}
  \centerline{(d) t = 60.0}
\end{minipage}
%\end{tabular}
\caption{Breather solution for the complex short pulse equation \eqref{eqn: complex_SPE1_ex}: $H_1$ conserved DG scheme with $N = 320$ cells, $P^2$ elements. The parameters $\alpha_1 =  e^{-6}$, $\alpha_2 = e^{4} $, $p_1 = 0.5 + i$, $p_2 = 0.8 + 2i$. }
\label{fig:2-soliton_complex_CD_abs}
\end{figure}

\begin{figure}[htp]
%\begin{tabular}{cc}
\begin{minipage}{0.49\linewidth}
  \centerline{\includegraphics[width=1.1\textwidth]{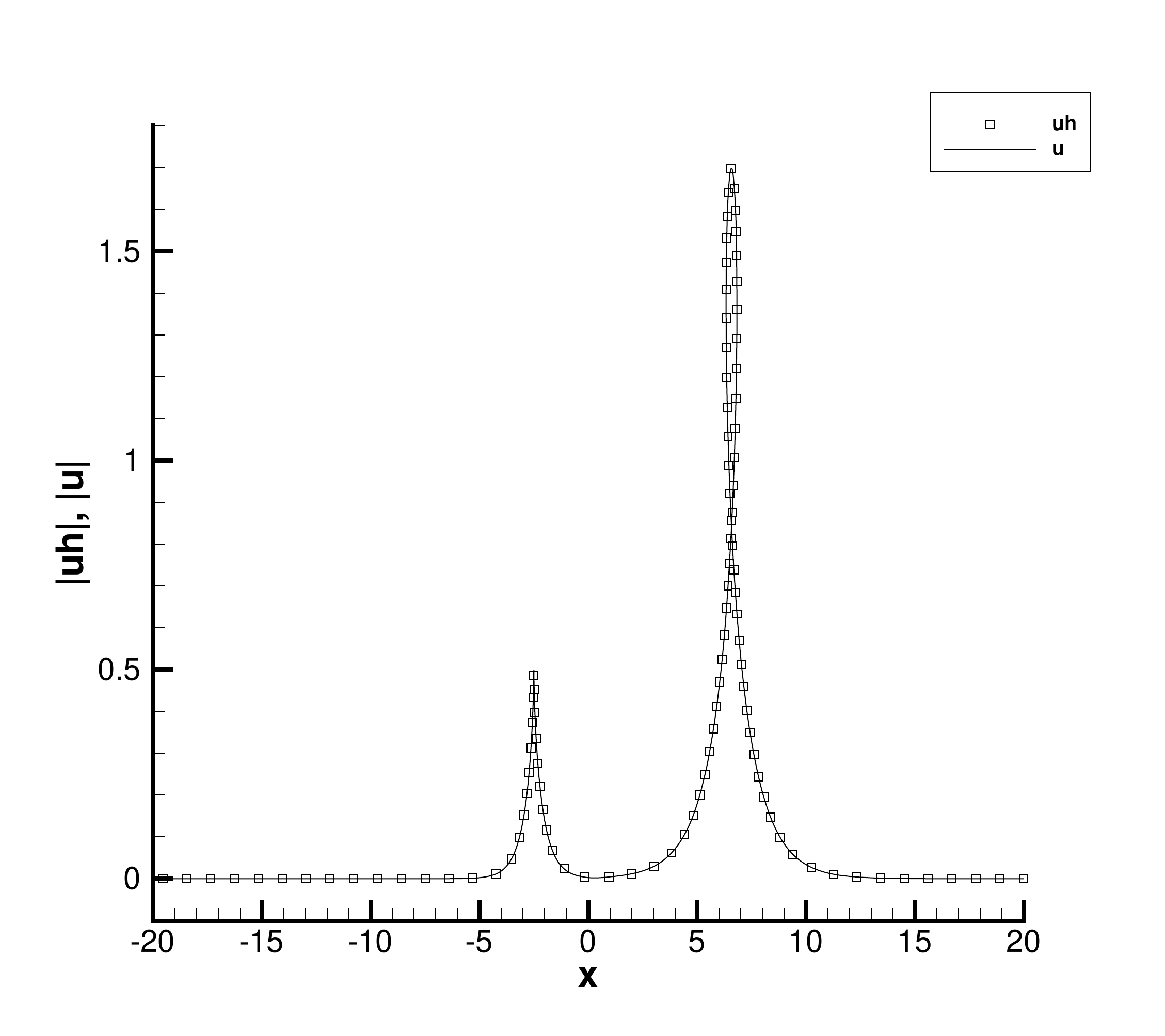}}
  \centerline{(a) t = 0.0}
\end{minipage}
\hfill
\begin{minipage}{0.49\linewidth}
  \centerline{\includegraphics[width=1.1\textwidth]{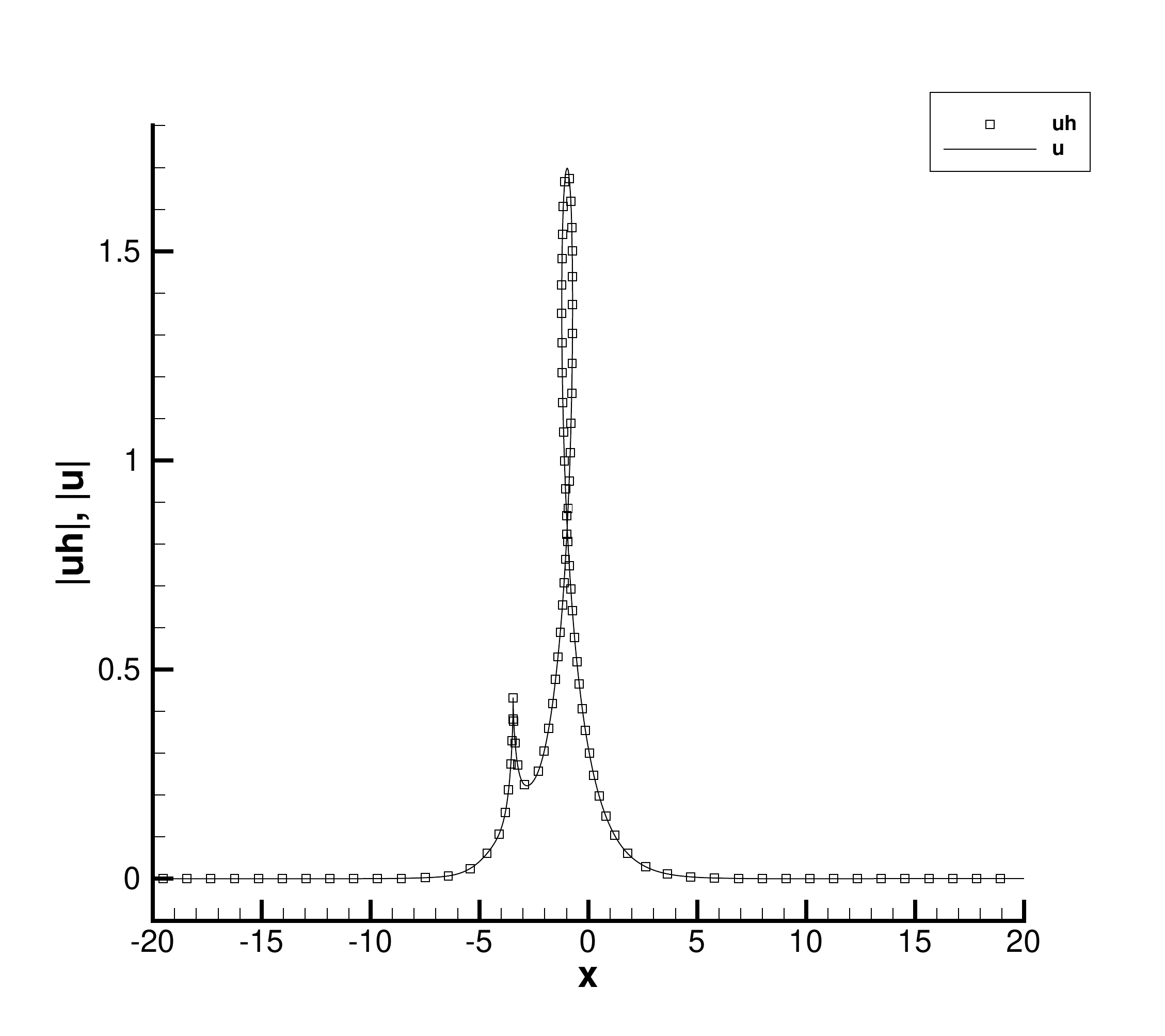}}
  \centerline{(b) t = 8.0}
\end{minipage}
\vfill
\begin{minipage}{0.49\linewidth}
  \centerline{\includegraphics[width=1.1\textwidth]{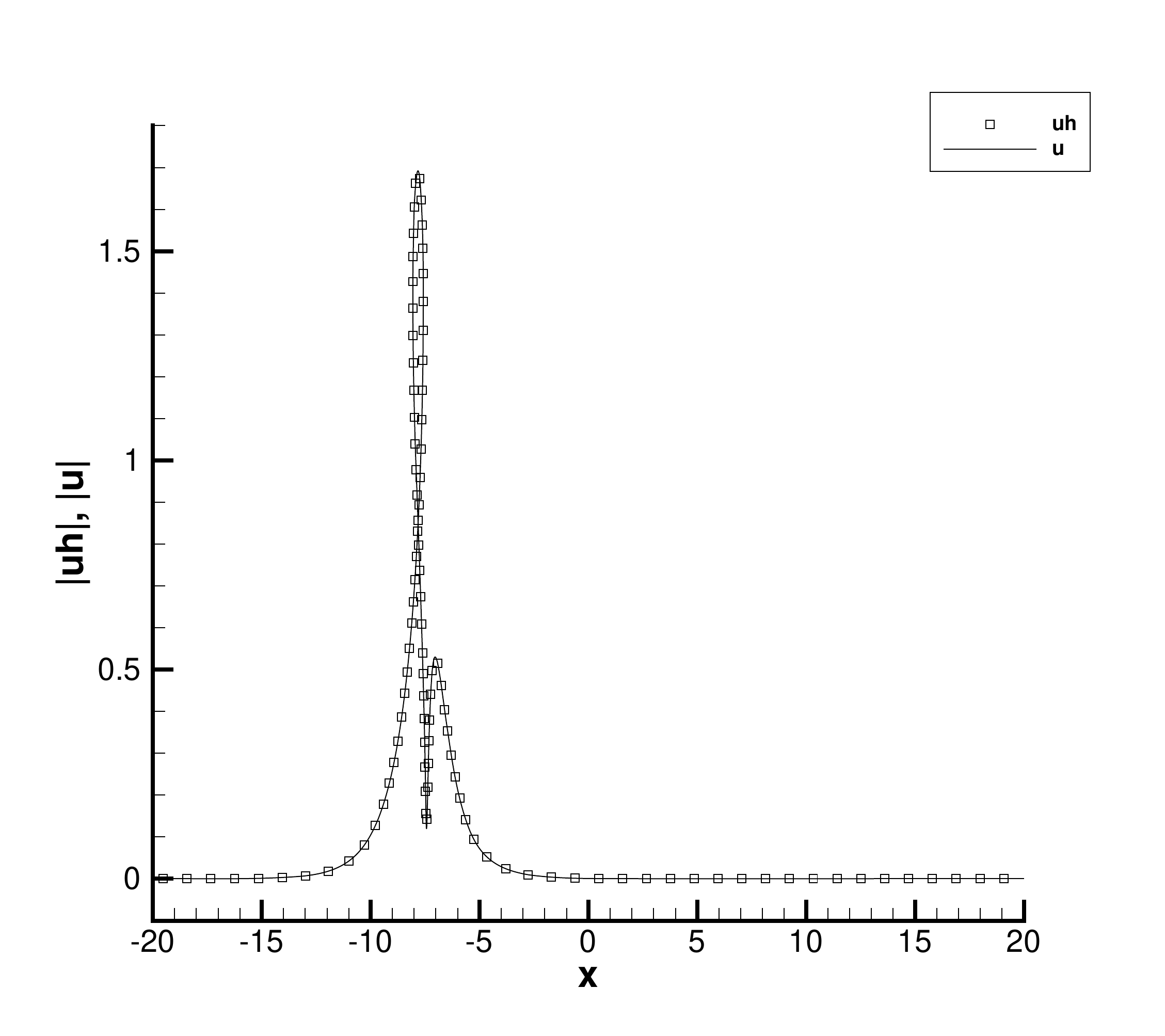}}
  \centerline{(c) t = 15.0}
\end{minipage}
\hfill
\begin{minipage}{0.49\linewidth}
  \centerline{\includegraphics[width=1.1\textwidth]{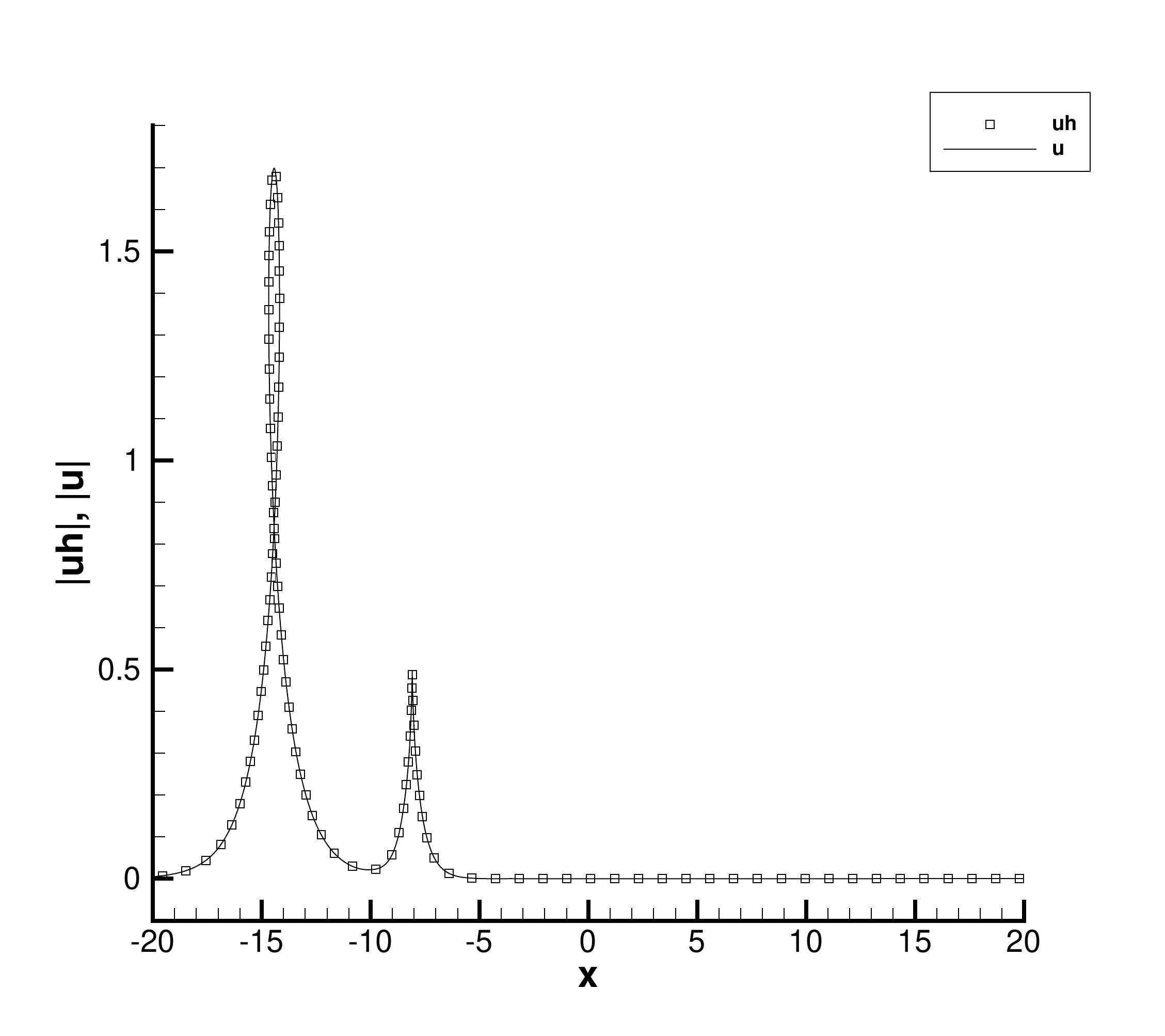}}
  \centerline{(d) t = 22.0}
\end{minipage}
%\end{tabular}
\caption{ Loop-cuspon-soliton solution for the complex short pulse equation \eqref{eqn: complex_SPE1_ex}: $H_1$ conserved DG scheme with $N = 320$ cells, $P^2$ elements. The parameters $\alpha_1 =  e^{-6}$, $\alpha_2 = e^{4} $, $p_1 = 0.9 + 0.5i$, $p_2 = 2.0 + 2i$. }
\label{fig:2-soliton_complex_CD_abs2}
\end{figure}

Similar to the short pulse equation \eqref{eqn:short pulse}, the conserved quantities $H_0$, $H_1$ of the complex CD system \eqref{eqn:ComplexCD1} are contained in Table \ref{ex:csp_change}. Increasing the degree $k$ of piecewise polynomial space can reduce the change of conserved quantities. The fluctuations of $\Delta H_0$, $\Delta H_1$ for integration DG scheme is the most slight among these three DG methods.
\begin{table}[H]
\begin{tabular}{cc|cc|cc|cc}
\hline
                  &&\multicolumn{2}{c|}{$ H_0 $ conserved DG scheme} &   \multicolumn{2}{c|}{$ H_1 $ conserved DG scheme} & \multicolumn{2}{c}{ integration DG scheme}  \\\hline
     $P^k$        &&   $\Delta H'_0$  & $\Delta H'_1$ &  $\Delta H'_0$  & $\Delta H'_1$ &  $\Delta H'_0$  & $\Delta H'_1$\\\hline
     $P^1$        &&   2.43E-02	     &     1.67E-01    &  1.28E-02 &9.15E-08   & 2.05E-04 & 3.25e09    \\
     $P^2$        &&   3.56E-06	     &     1.63E-06    &  3.70E-06 & 3.29E-09  & 5.64E-07 &7.68E-10    \\
      $P^3$       &&   1.04E-07	     &     2.19E-07    &  6.98E-08 & 7.70E-10  & 1.28E-09 &9.93E-12    \\\hline

\end{tabular}
\caption{\label{ex:csp_change} The time evolution of conserved quantities for breather solution of the complex CD system \eqref{eqn:ComplexCD1}, with the computational domain $[-50,50]$ and $N = 320$ cells at time $ T = 10$. The parameters $\alpha_1 =  e^{-6}$, $\alpha_2 = e^{4} $, $p_1 = 0.5 + i$, $p_2 = 0.8 + 2i$. }
\end{table}

\end{example}

\begin{example}
The coupled short system
\begin{equation}\label{ex:eqnCCSP}
\left.
\begin{array}{r}
u_{xt} = u + \frac{1}{2}((\abs{u}^2 + \abs{v}^2)u_x)_x,\\
v_{xt} = v + \frac{1}{2}((\abs{v}^2 +  \abs{u}^2)v_x)_x,
\end{array}
\right\} \ u, v\in \mathbb{C}
\end{equation}
is integrable and admits $N$-soliton solution. The corresponding CD system \eqref{eqn:BCCD} with the exact solutions was developed in \cite{Feng_2015_PDNP}. Here, we perform the simulation of 2-smooth-soliton solution at time $T = 0, 25, 45, 65$, in which  the collision is elastic as shown in Figure \ref{fig:CCSP}. Compared with the results in \cite{Feng_2015_PDNP}, all DG schemes resolve the collision well. Hence, we  only show the result of $H_0$ conserved DG scheme in Figure \ref{fig:CCSP}.

\begin{figure}[!htp]
%\begin{tabular}{cc}
\begin{minipage}{0.48\linewidth}
  \centerline{\includegraphics[width=1.1\textwidth]{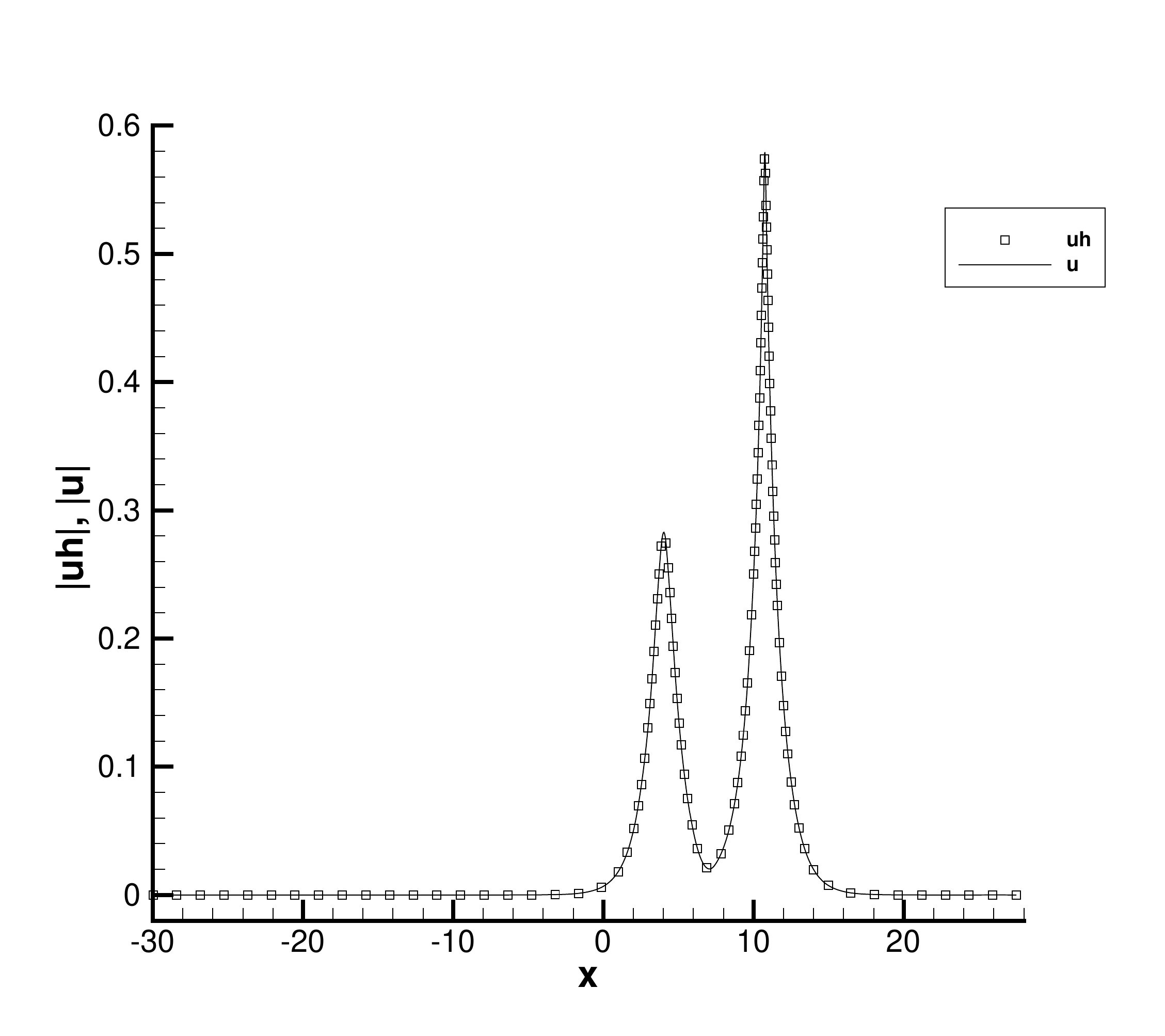}}
  \centerline{(a) t = 0.0}
\end{minipage}
\hfill
\begin{minipage}{0.48\linewidth}
  \centerline{\includegraphics[width=1.1\textwidth]{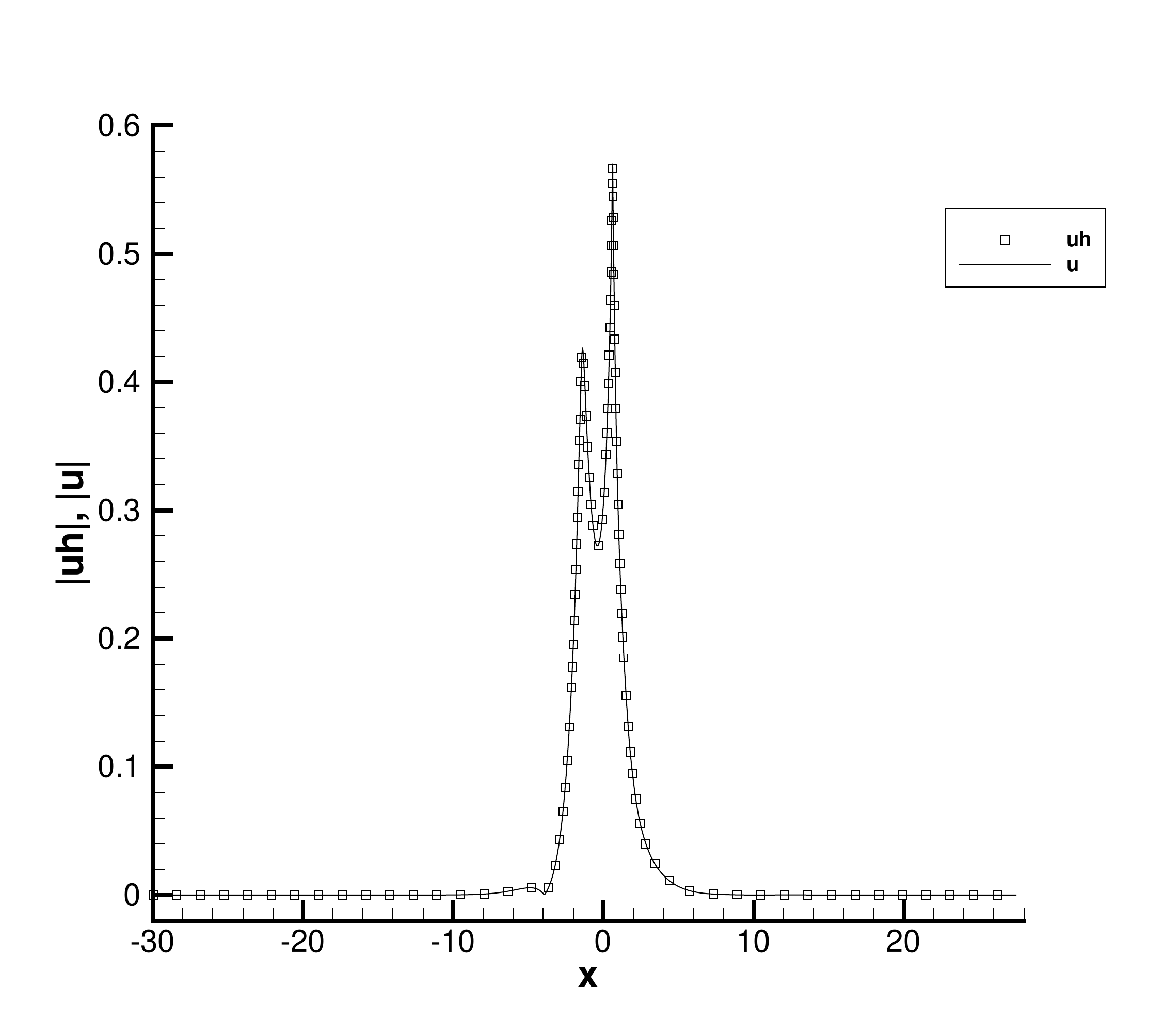}}
  \centerline{(b) t = 25.0}
\end{minipage}
\vfill
\begin{minipage}{0.48\linewidth}
  \centerline{\includegraphics[width=1.1\textwidth]{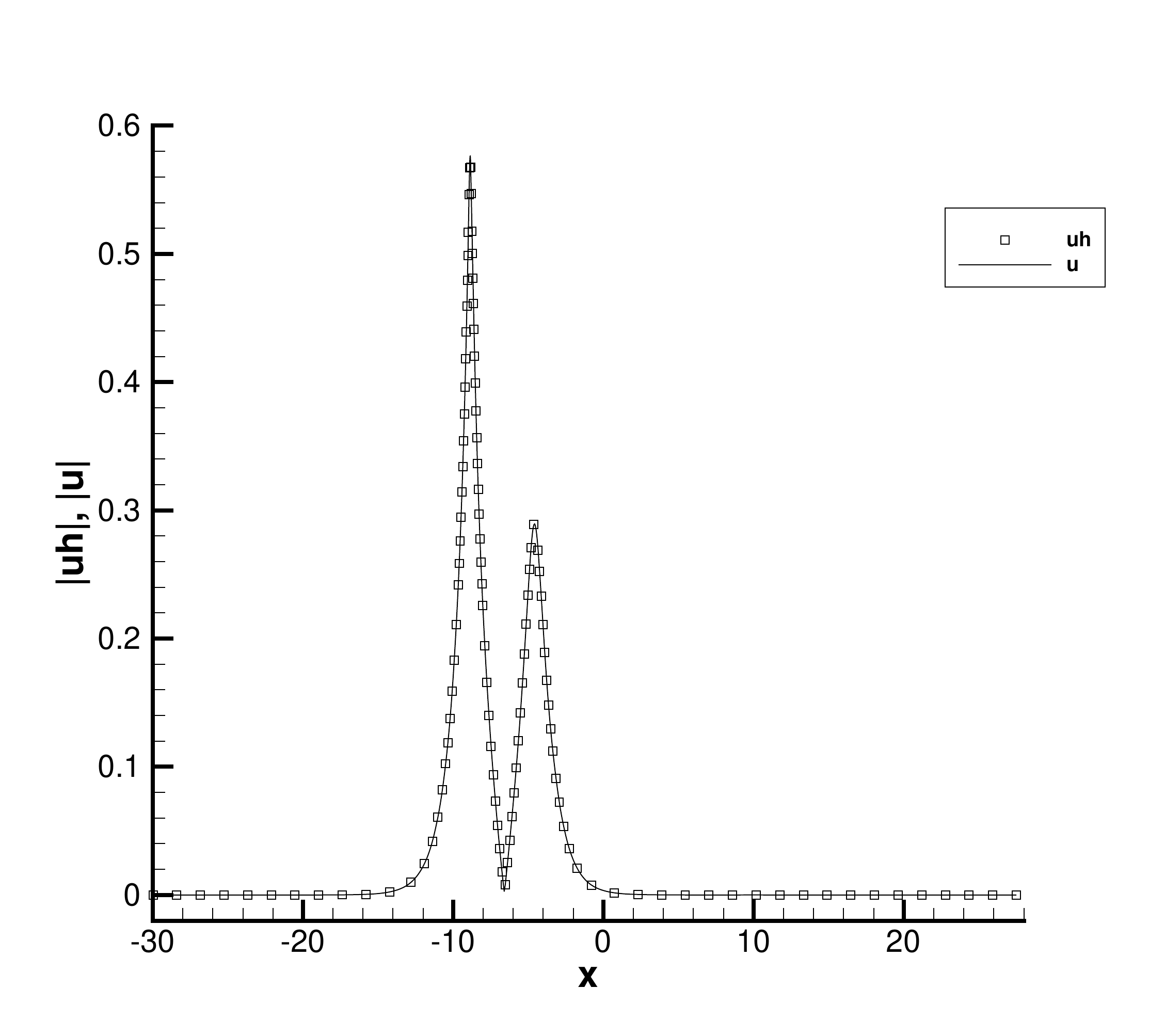}}
  \centerline{(c) t = 45.0}
\end{minipage}
\hfill
\begin{minipage}{0.48\linewidth}
  \centerline{\includegraphics[width=1.1\textwidth]{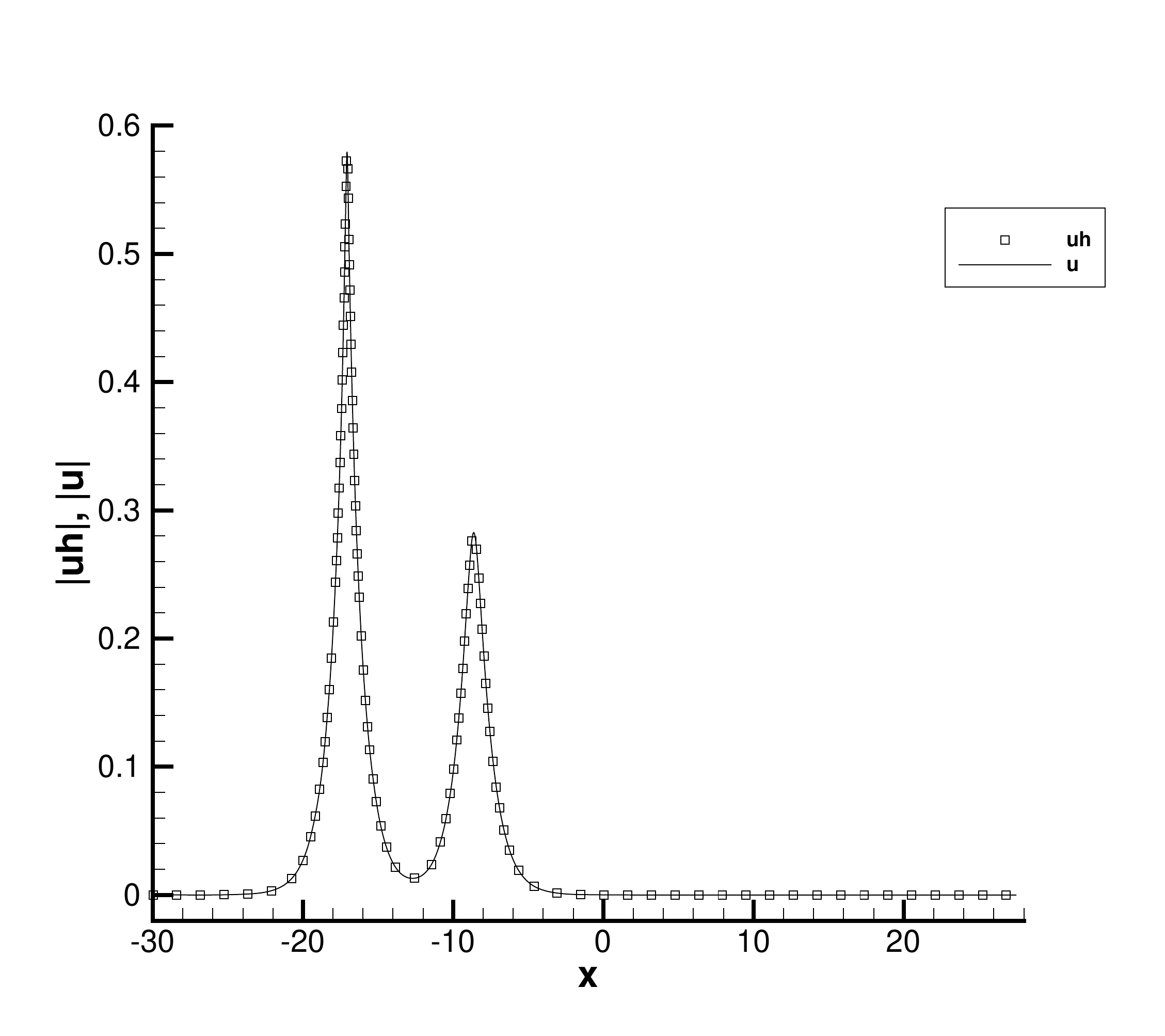}}
  \centerline{(d) t = 65.0}
\end{minipage}
%\end{tabular}
\caption{\label{fig:CCSP} Two smooth-soliton solution of the coupled short pulse equation in complex form  \eqref{ex:eqnCCSP}: $H_0$ conserved DG scheme with $N = 320 $ cells, $P^2$ elements.  }
\end{figure}
\end{example}

\begin{example}
In this example, we give a 1-cuspon-soliton solution for the coupled modified short pulse equation
\begin{equation}\label{eqn:CMSP1}
u_{xt}  = u + \frac{1}{2}v(u^2)_{xx}, \ v_{xt} = v + \frac{1}{2}u(v^2)_{xx}, \ u, v\in \mathbb{R}
\end{equation}
in the form of
\begin{equation}
\begin{cases}
&u = \frac{g_1}{f}, \ v = \frac{g_2}{f}, \\
& x = y - (\ln f)_s, \ t = s.
\end{cases}
\end{equation}
The $\tau$-functions $f, g_1, g_2$ are
\begin{align}
&f = 1 + \frac{a_1b_1p_1^2}{4}e^{2\xi_1}, \ g_1 = a_1e^{\xi_1}, \ g_2 = b_1e^{\xi_1}, \\
& \xi_1 = p_1y + \frac{s}{p_1} +\xi_{10},
\end{align}
where $a_1, b_1, p_1, \xi_{10}$ are real constants. The numerical solution $u_h$ simulated by $H_1$ conserved DG scheme is shown in Figure \ref{fig:1-cuspon_MSP}.  The other two schemes, the $H_0$ conserved scheme and the integration DG scheme perform very similar to the $H_1$ conserved DG scheme.  The solution $v_h$ is also a cuspon-soliton solution whose shape is just like $u_h$.
\begin{figure}[!htp]
\begin{center}
\begin{tabular}{ccc}
\includegraphics[width=0.33\textwidth]{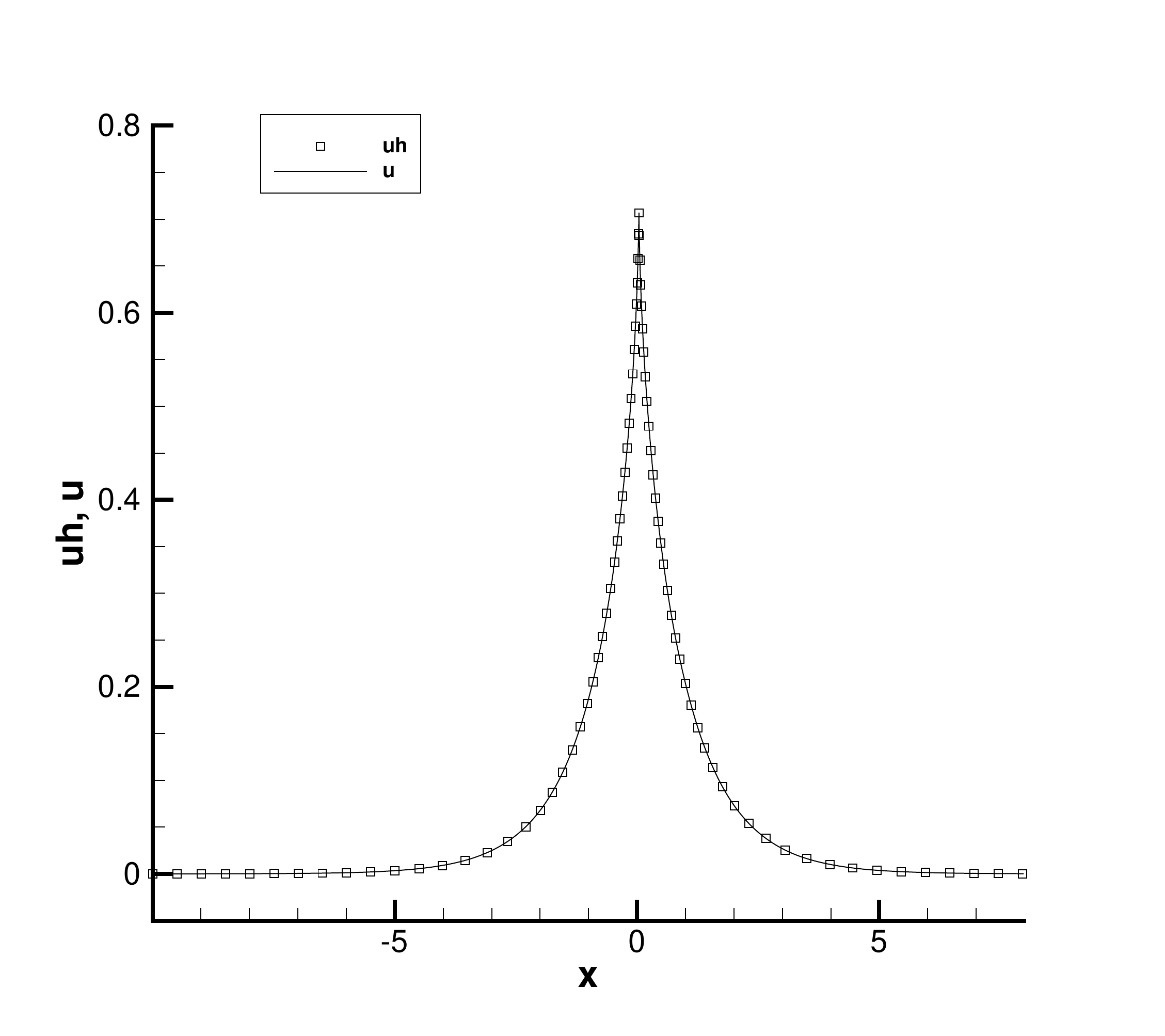}&\includegraphics[width=0.33\textwidth]{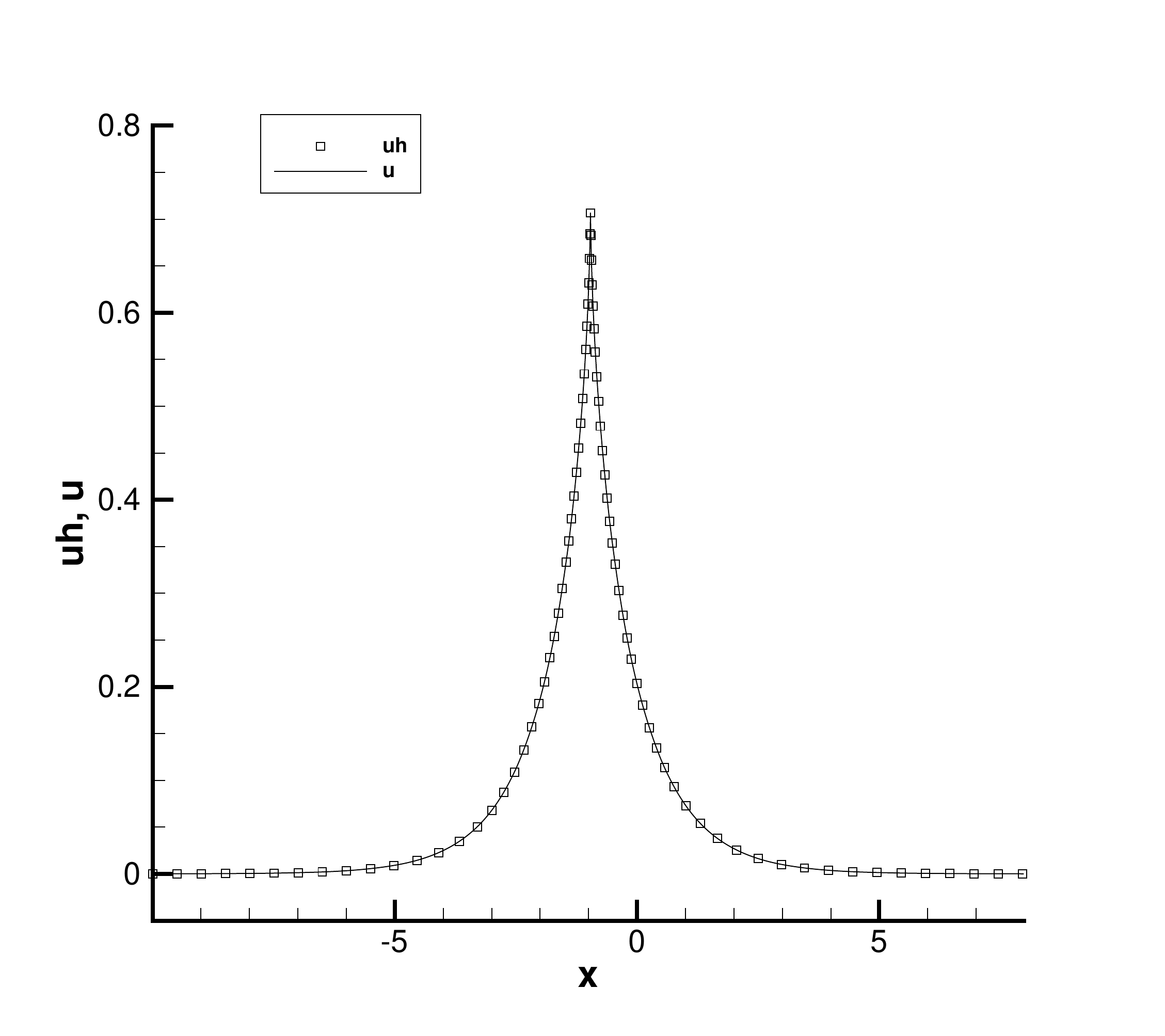}&\includegraphics[width=0.33\textwidth]{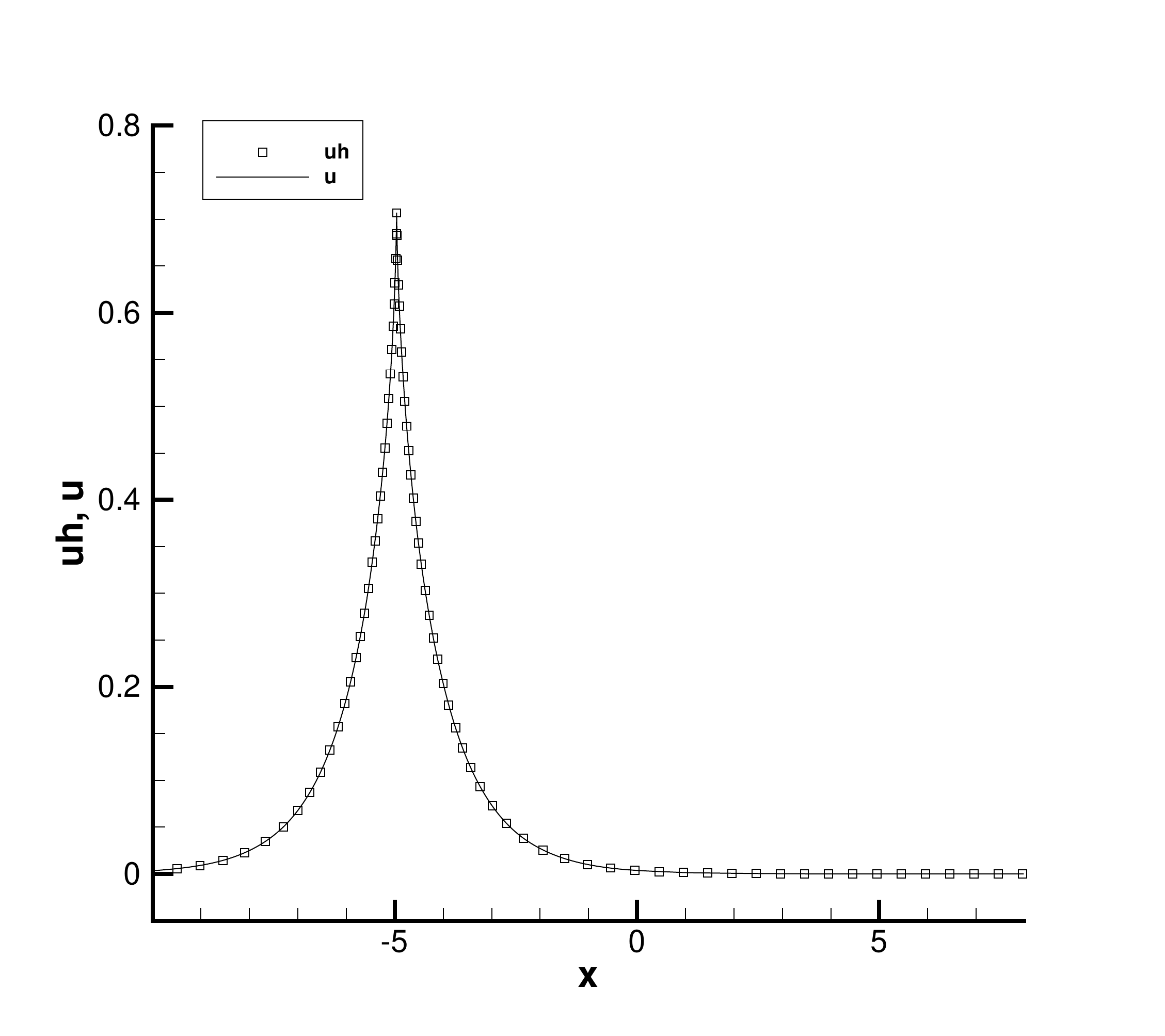}\\
(a) t = 0.0 & (b) t = 1.0  &(c) t = 5.0
\end{tabular}
\end{center}
\caption{\label{fig:1-cuspon_MSP} 1-cuspon-soliton solution $u$ of the coupled modified short pulse equation \eqref{eqn:CMSP1}: $H_1$ conserved DG scheme with $N=160$ cells, $P^2$ elements. The parameters $ p_1 = 1.0$, $a_1 = 0.5$, $b_1 = 1.0$.  }
\end{figure}
\end{example}

\begin{example} In this example, we consider the complex modified short pulse equation of defocusing type
\begin{equation}\label{eqn:MCSP_defocusing_ex}
u_{xt} = u - \frac{1}{2}u^*(u^2)_{xx}, \ u\in\mathbb{C},
\end{equation}
and its corresponding CD system \eqref{eqn:Complex_defocusing_MCD} has solution:
\begin{equation}
\begin{cases}
&u = \frac{1}{2}\frac{g}{f}e^{i(\kappa y + \gamma s)}, \\
&x = -\kappa\gamma y + \frac{1}{4}s - (\ln f)_s,
\end{cases}
\end{equation}
 where
\begin{align}
&f = 1+e^\xi, \ g = 1 + e^{\xi-2i\varphi},\\
&\xi = \beta y +  \omega s, \ \omega = -\sin\varphi, \ \beta = \frac{-\kappa \sin\varphi}{\cos\varphi - \gamma}.
\end{align}
In Figure \ref{fig:MCSP_defocusing}, we can see that $H_0$ conserved DG scheme can resolve the cuspon-soliton solution as well as the smooth-soliton solution. Similar numerical solutions can be obtained for the other two schemes, thus we omit it here for the concise.
\begin{figure}[!htp]
\begin{center}
\begin{tabular}{cc}
\includegraphics[width=0.45\textwidth]{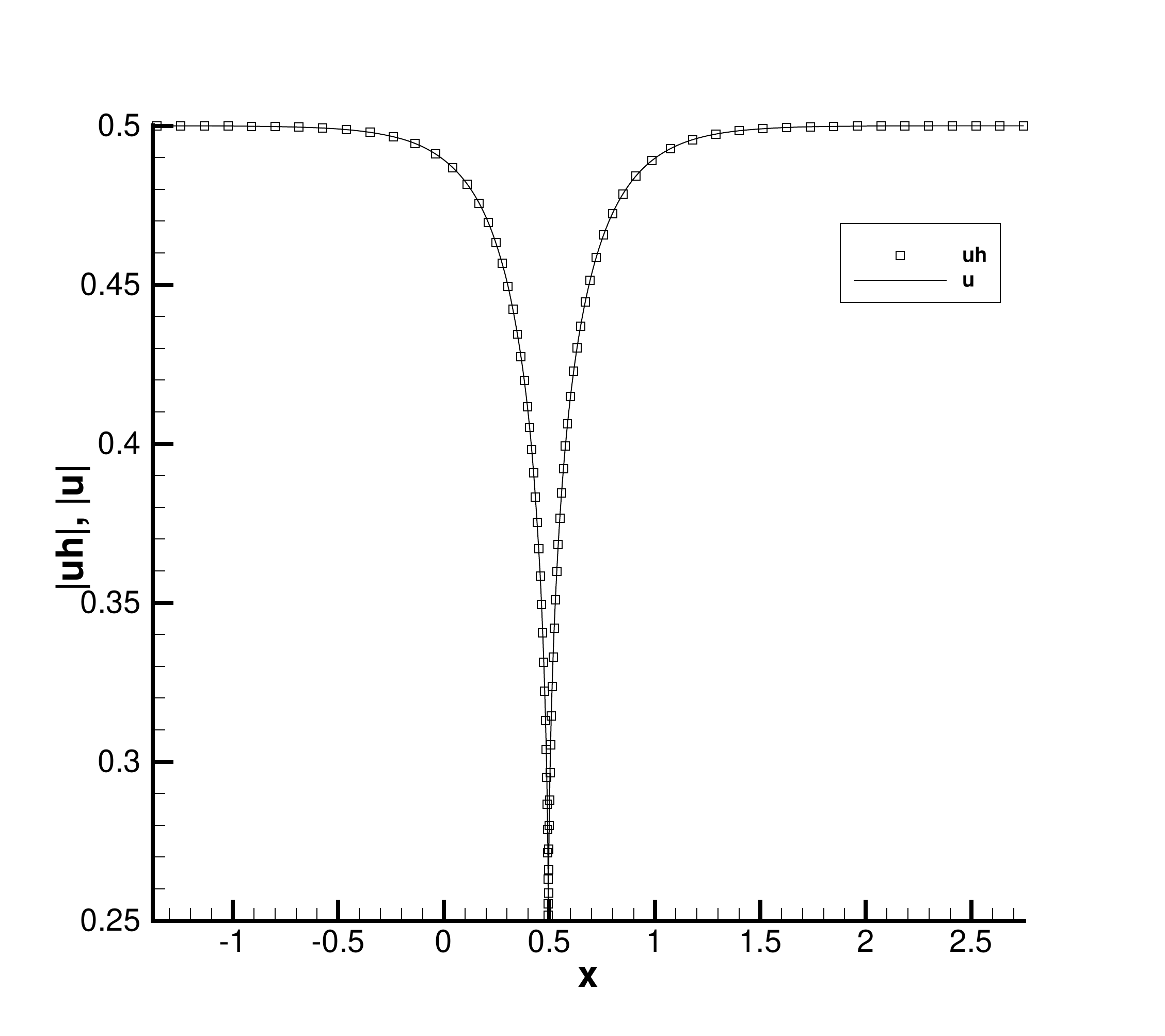}&\includegraphics[width=0.45\textwidth]{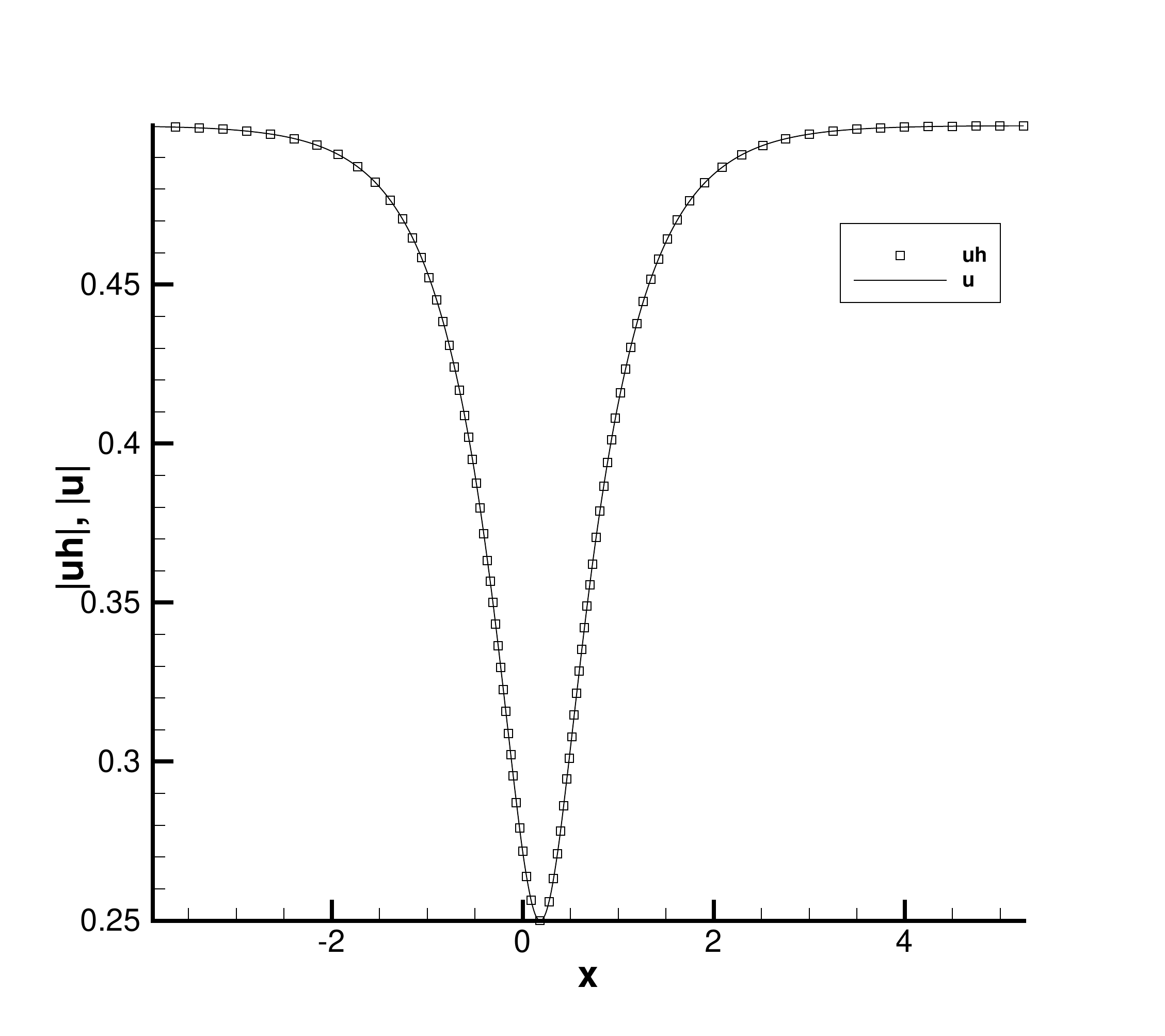}\\
(a) $\gamma = 0.5$ & (b) $ \gamma =0.25 $
\end{tabular}
\end{center}
\caption{\label{fig:MCSP_defocusing} Two types solution of the defocusing complex modified short pulse equation \eqref{eqn:MCSP_defocusing_ex}: $H_0$ conserved DG scheme with $N = 160$ cells, $P^2$ elements, at time T = 1. The parameters $ \kappa = 1.0 $, $\varphi = \frac{2}{3}\pi $. }
\end{figure}

\end{example}

\begin{example}
We consider the novel coupled short pulse system
\begin{equation}\label{eqn:New_CSP_ex}
\left.
\begin{array}{r}
u_{xt} = u + \frac{1}{6}(u^3)_{xx} + \frac{1}{2}v^2u_{xx},\\
v_{xt} = v + \frac{1}{6}(v^3)_{xx} + \frac{1}{2}u^2v_{xx}, 
\end{array}
\right\}\ u, v\in \mathbb{R}
\end{equation}
which can only be converted into the sine-Gordon system \eqref{eqn:Coupled_sine-Gordon_1}
with exact solution
\begin{equation}
 z = 2i\ln\frac{f^*}{f}, \ \tilde{z} = 2i\ln\frac{g^*}{g}.
\end{equation}
Linking with the hodograph transformation \eqref{eqn:hodograph_SG_SPE}, we obtain the solution of this novel coupled short pulse system \eqref{eqn:New_CSP_ex}
\begin{equation}
\begin{cases}
&u = \frac{1}{2}(z + \tilde{z})_s = i\Big(\ln(\frac{f^*g^*}{fg})\Big)_s, \\
&v = \frac{1}{2}(z - \tilde{z})_s = i\Big(\ln(\frac{f^*g}{fg^*})\Big)_s,\\
&x = y - 2(\ln( ff^*gg^*))_s, \ t = s.\\
\end{cases}
\end{equation}
The form of $\tau$ functions $f,g$ are listed in \cite{Feng_2012_JPMT}. We consider the solution like 3-soliton solution which is a combination of 1-loop-soliton and 2-breather solution. In Figure \ref{fig:3-soliton_uCSG}, \ref{fig:3-soliton_vCSG}, the process of interaction is displayed by the integration DG scheme, which resolve the collision well.
%For 1-soliton solution,
%\begin{align}
%f = 1 + ie^{\xi_1}, \ g = 1 + is_1e^{\xi_1}
%\end{align}
%%with $s_1 = (1-p_1)/(1+p_1)$, $\xi = p_1y + \frac{s}{p_1} + y_0$.
%For 2-soliton solution,
%\begin{align}
%f = 1 + ie^{\xi_1} + ie^{\xi_2} - b_{12}e^{\xi_1 + \xi_2},\  g = 1 + is_1e^{\xi_1} + is_2e^{\xi_2} - b'_{12}e^{\xi_1 + \xi_2},
%\end{align}
%with $s_i = (1 - p_i)/(1+p_i)$, $\xi_i = p_iy + \frac{s}{p_i} + y_i0, i= 1,2$, and $b_{12} = \frac{(p_1 - p_2)^2}{(p_1 + p_2)^2}$, $b'_{12} = b_{12}s1s2$. If $p_1, p_2$ are two distinct real numbers, then it represents a 2-soliton solution. When $p_1, p_2$ are complex and conjugate to each other, it is breather solution for the new coupled short pulse system \eqref{eqn:New_CSP}.
\begin{figure}[!htp]
\begin{center}
\begin{tabular}{ccc}
\includegraphics[width=0.33\textwidth]{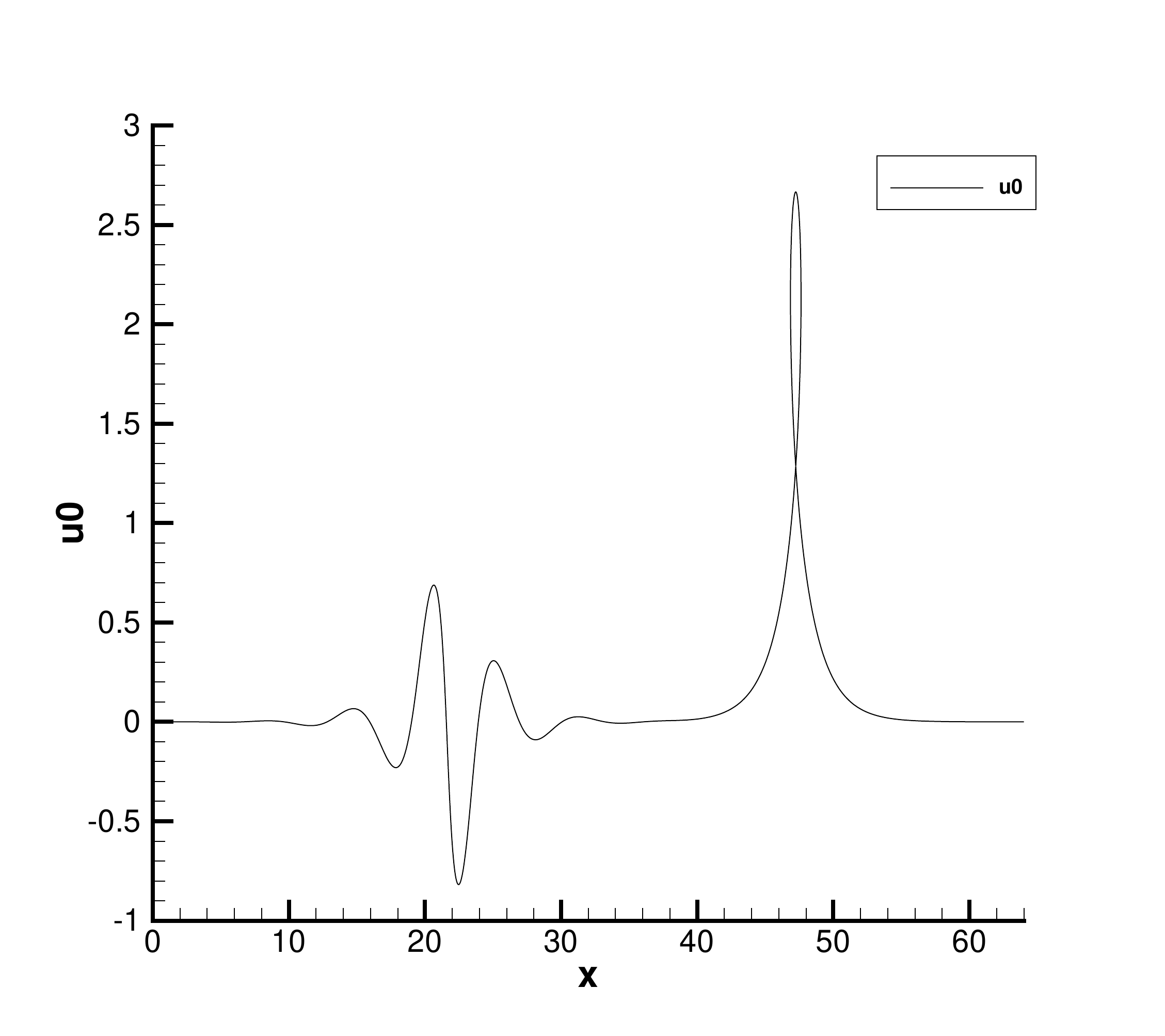}&\includegraphics[width=0.33\textwidth]{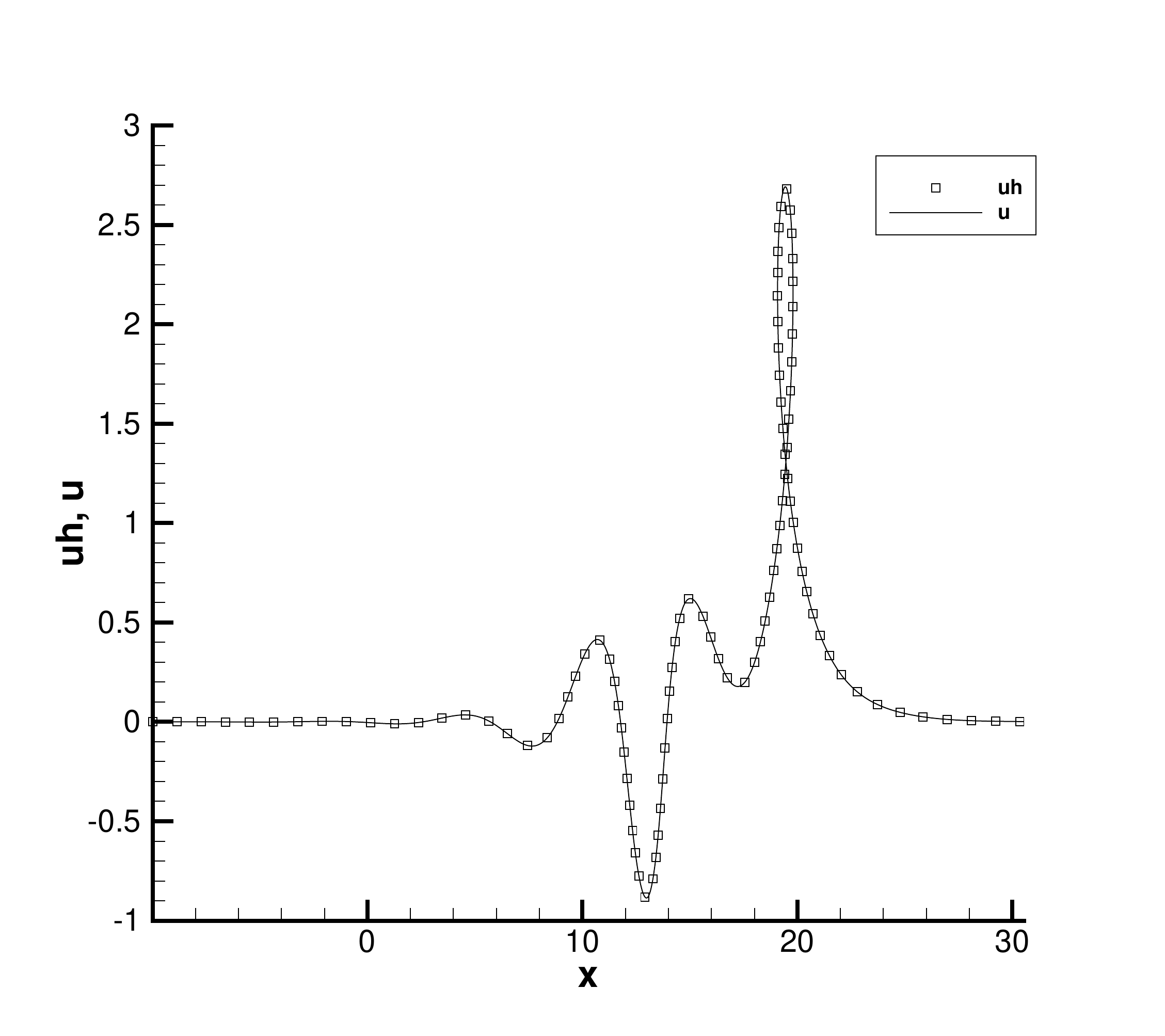}&\includegraphics[width=0.33\textwidth]{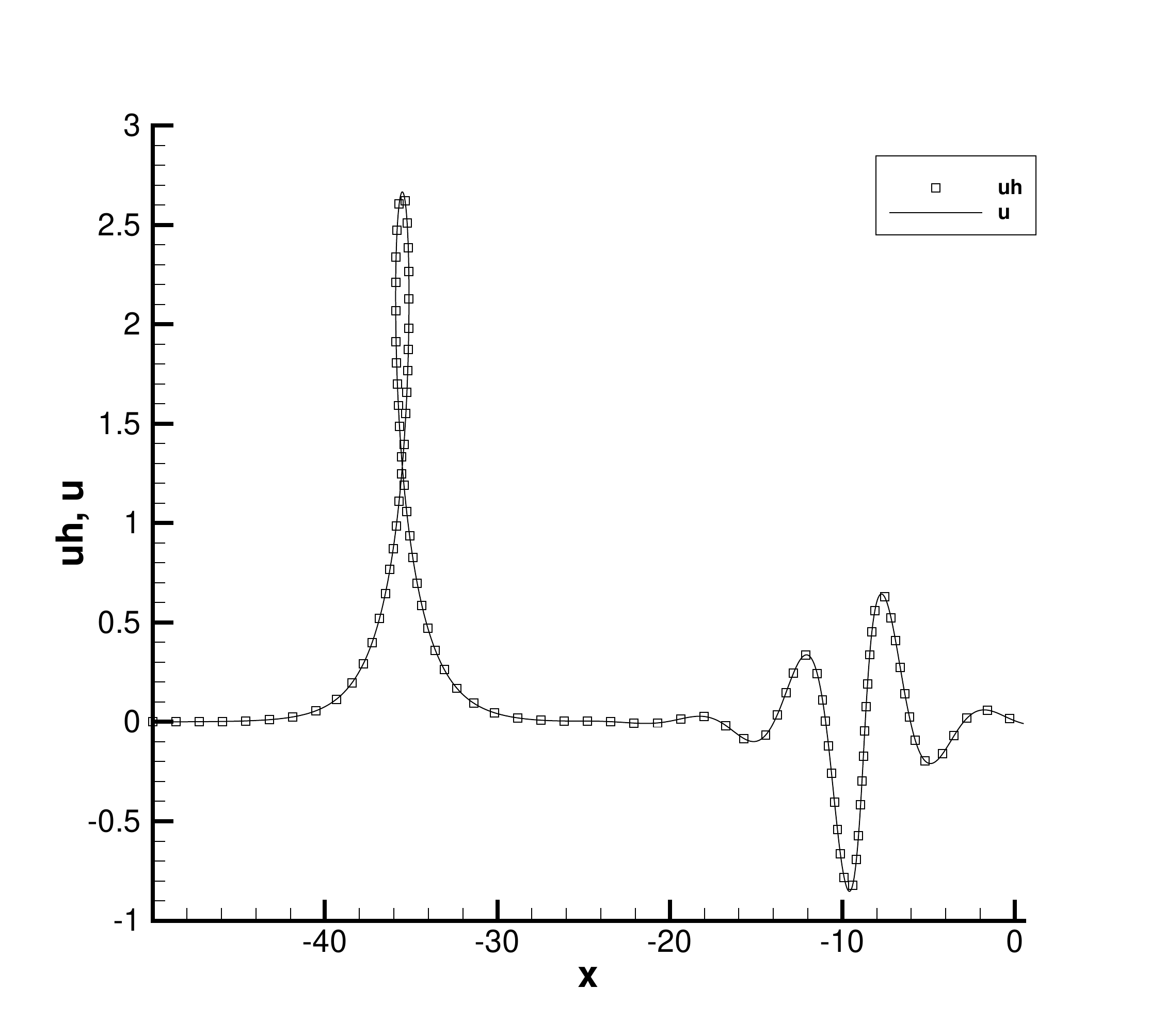}\\
(a) t = 0.0 & (b) t = 10.0  &(c) t = 30.0
\end{tabular}
\end{center}
\caption{\label{fig:3-soliton_uCSG} Loop-breather-soliton interaction $u$ of the novel coupled short pulse system \eqref{eqn:New_CSP_ex}: Integration DG scheme with $N = 160$ cells, $P^2$ elements.} %The parameters $ p_1 = 0.6$, $p_2 = 0.4 + i$, $p_3 = 0.4 - i$, $y_{10} = -30$, $y_{20} = y_{30} =  -10.0 $. }
\end{figure}
\begin{figure}[!htp]
\begin{center}
\begin{tabular}{ccc}
\includegraphics[width=0.33\textwidth]{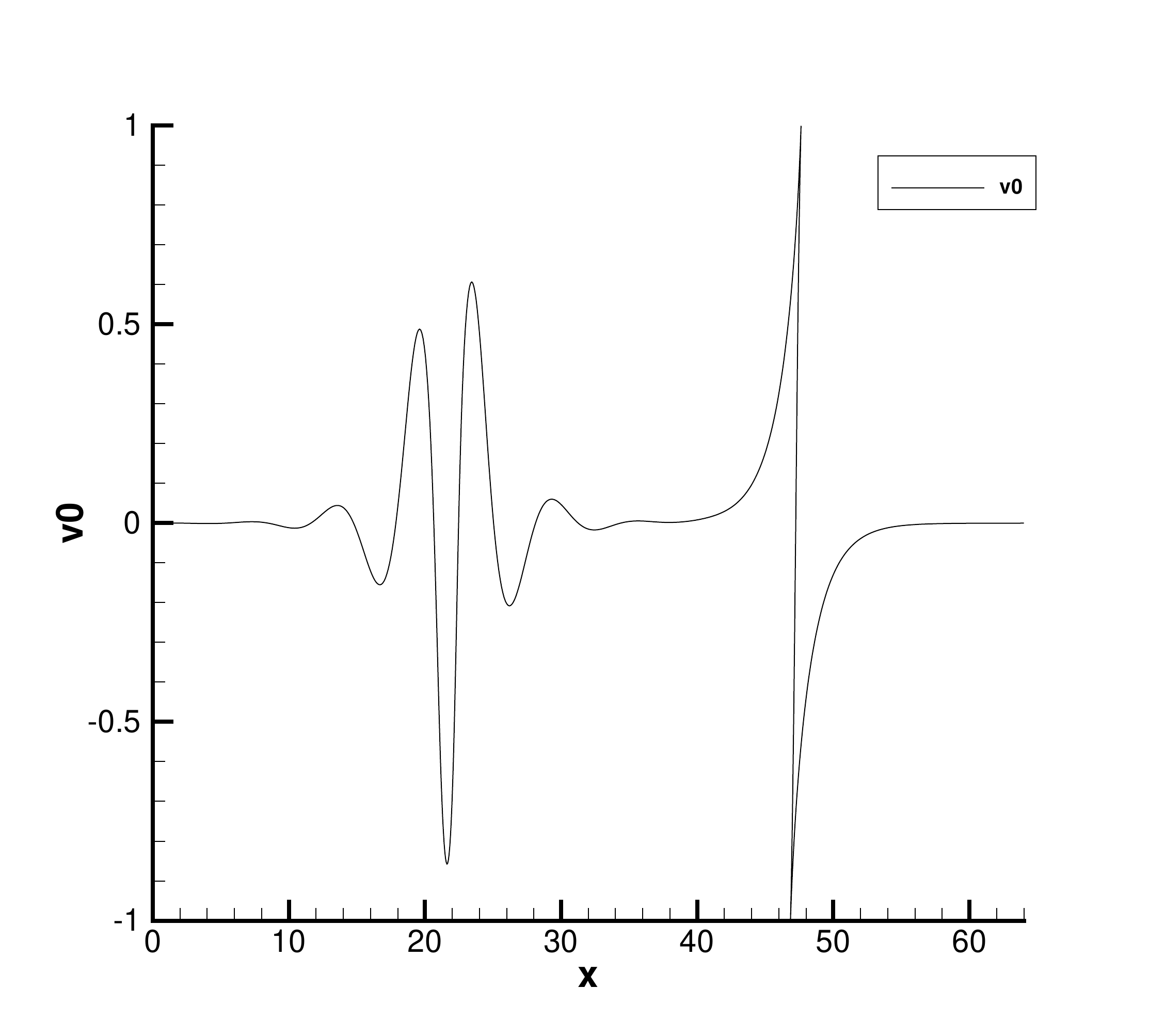}&\includegraphics[width=0.33\textwidth]{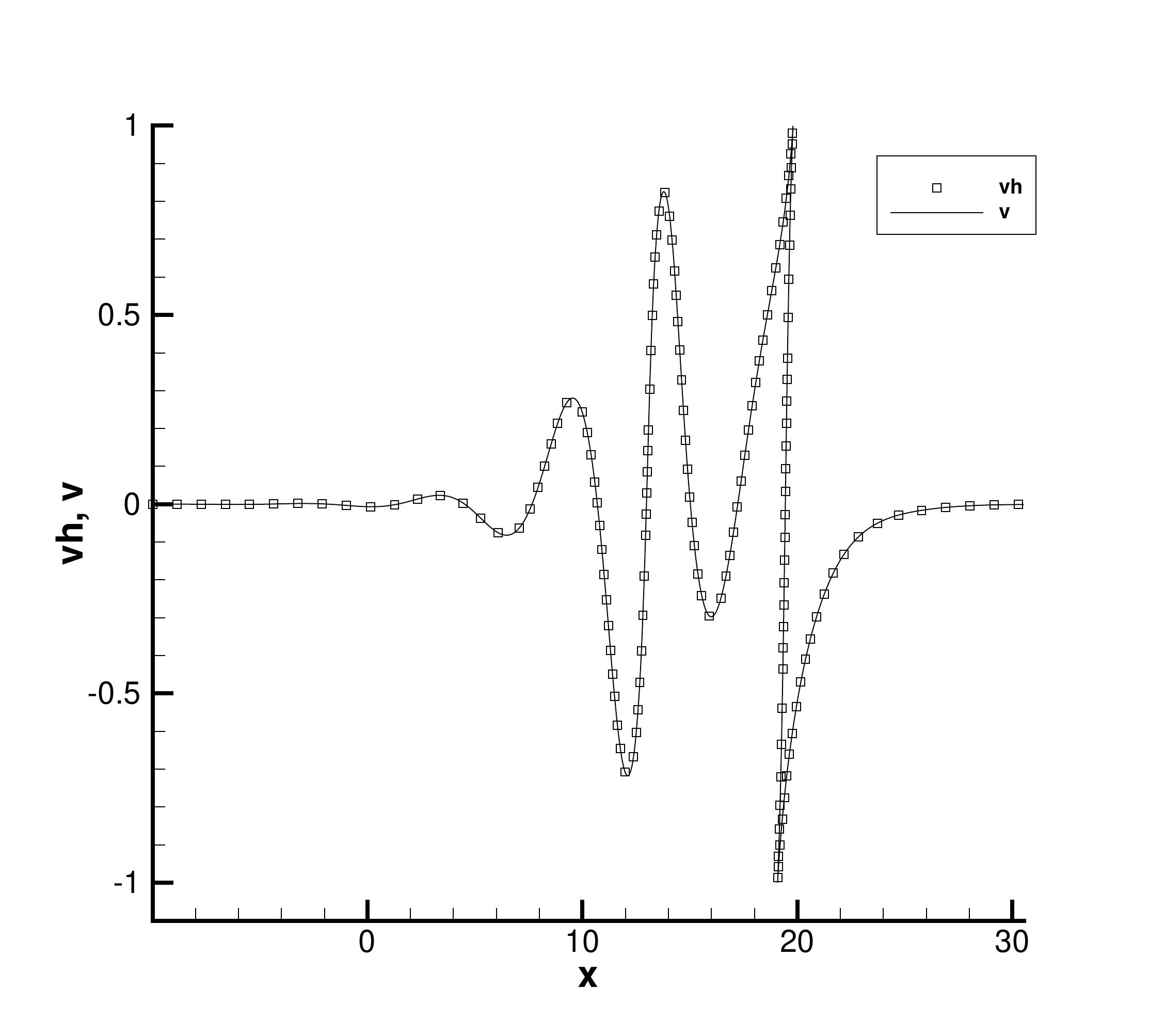}&\includegraphics[width=0.33\textwidth]{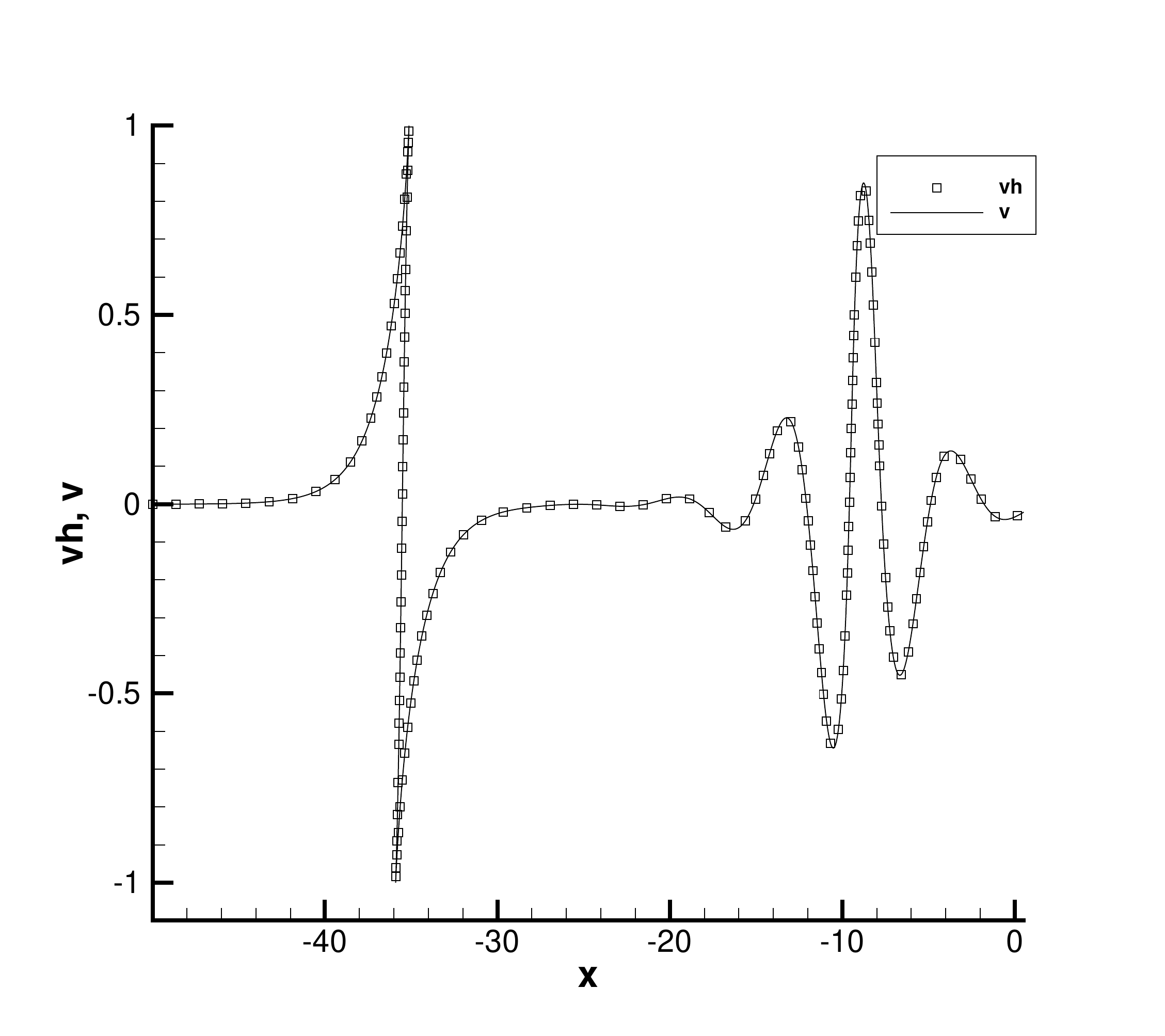}\\
(a) t = 0.0 & (b) t = 10.0  &(c) t = 30.0
\end{tabular}
\end{center}
\caption{\label{fig:3-soliton_vCSG}  Breather-cuspon-soliton interaction $v$ of the novel coupled short pulse system \eqref{eqn:New_CSP_ex}: Integration DG scheme with $N = 160$ cells, $P^2$ elements.} %The parameters $ p_1 = 0.6$, $p_2 = 0.4 + i$, $p_3 = 0.4 - i$, $y_{10} = -30$, $y_{20} = y_{30} = -10.0 $.   }
\end{figure}
\end{example}

\section{Conclusion}\label{conclusion}
In this paper, we developed  DG methods for short pulse type equations.  First, we proposed the $E_0$ conserved DG scheme for the short pulse equation directly for solving smooth solutions. Then for nonclassical solutions such as  loop-soliton, cuspon-soliton solutions, we introduced the DG schemes based on
the hodograph transformations, which link  the SP equation with the CD system or the sine-Gordon equation. For the CD system, we constructed the $H_0$ and $H_1$ conserved DG schemes, in addition to solving singular solutions, which can preserve the corresponding conserved quantities. Also an integration DG scheme was proposed.  Theoretically, we proved the a priori error estimates for  the $H_1$ conserved scheme and the integration DG scheme. More precisely, the optimal order  of accuracy in $L^2$  norm can be proved for the $H_1$ conserved DG scheme. For the integration DG scheme, the optimal  order of accuracy in $L^2$ norm for two variables can be obtained,  but suboptimal  order of accuracy in $L^\infty$  norm for the other one. Numerically, both the $H_1$ conserved scheme and the integration DG scheme can achieve the optimal convergence rates in our numerical tests, however, the order of accuracy for the $H_0$ conserved DG scheme is optimal for the even order piecewise polynomial space and suboptimal in the odd order case. For the sine-Gordon equation, we also proposed two effective DG schemes to solve it in case there is no transformation linking the SP equation with the CD system. These DG schemes can be adopted to the relevant generalized short pulse type equations, which have been shown in our numerical tests. Finally, several numerical examples in different circumstances were shown to illustrate the accuracy and capability of these DG schemes.

\end{document}